\documentclass[11pt]{article}
\usepackage{multicol}
\usepackage{amssymb,amsmath,amsthm,bm}
\usepackage[colorlinks=true, urlcolor=blue,linkcolor=blue, citecolor=blue]{hyperref}
\usepackage{mathrsfs,verbatim}
\usepackage{caption,cleveref }
\usepackage{graphicx, float}
\usepackage{color, mathtools, tikz, xcolor}
\usepackage{enumerate}
\usepackage{tikz}
\usetikzlibrary{shapes.geometric}
\usepackage{pdfpages}
\usepackage{circuitikz}

\usetikzlibrary{calc}

\usepackage{bbm}
\usepackage[mathscr]{euscript}
\usepackage{appendix}
\usepackage{geometry}
\geometry{textwidth= 6.8 in, textheight= 9.3in}

\newcommand{\eps}{\varepsilon}

\newcommand{\ds}{\displaystyle}
\newcommand{\no}{\noindent}

\numberwithin{equation}{section}

\begin{document}

	\newcommand{\bea}{\begin{eqnarray}}
		\newcommand{\ena}{\end{eqnarray}}
	\newcommand{\beas}{\begin{eqnarray*}}
		\newcommand{\enas}{\end{eqnarray*}}
	\newcommand{\beq}{\begin{align}}
		\newcommand{\enq}{\end{align}}
	\def\qed{\hfill \mbox{\rule{0.5em}{0.5em}}}
	\newcommand{\bbox}{\hfill $\Box$}
	\newcommand{\ignore}[1]{}
	\newcommand{\ignorex}[1]{#1}
	\newcommand{\wtilde}[1]{\widetilde{#1}}
	\newcommand{\mq}[1]{\mbox{#1}\quad}
	\newcommand{\bs}[1]{\boldsymbol{#1}}
	\newcommand{\qmq}[1]{\quad\mbox{#1}\quad}
	\newcommand{\qm}[1]{\quad\mbox{#1}}
	\newcommand{\nn}{\nonumber}
	\newcommand{\Bvert}{\left\vert\vphantom{\frac{1}{1}}\right.}
	\newcommand{\To}{\rightarrow}
	\newcommand{\supp}{\mbox{supp}}
	\newcommand{\law}{{\cal L}}
	\newcommand{\Z}{\mathbb{Z}}
	\newcommand{\mc}{\mathcal}
	\newcommand{\mbf}{\mathbf}
	\newcommand{\tbf}{\textbf}
	\newcommand{\lp}{\left(}
	\newcommand{\limm}{\lim\limits}
	\newcommand{\limminf}{\liminf\limits}
	\newcommand{\limmsup}{\limsup\limits}
	\newcommand{\rp}{\right)}
	\newcommand{\mbb}{\mathbb}
	\newcommand{\rainf}{\rightarrow \infty}
	\newtheorem{theorem}{Theorem}[section]
	\newtheorem{problem}[theorem]{Problem}
	\newtheorem{exercise}[theorem]{Exercise}
	\newtheorem{corollary}[theorem]{Corollary}
	\newtheorem{conjecture}[theorem]{Conjecture}
	\newtheorem{claim}[theorem]{Claim}
	\newtheorem{proposition}[theorem]{Proposition}
	\newtheorem{lemma}[theorem]{Lemma}
	\newtheorem{definition}[theorem]{Definition}
	\newtheorem{example}[theorem]{Example}
	\newtheorem{remark}[theorem]{Remark}
	\newtheorem{solution}[theorem]{Solution}
	\newtheorem{case}{Case}[theorem]
	\newtheorem{condition}[theorem]{Condition}
	\newtheorem{assumption}[theorem]{Assumption}
	\newtheorem{note}[theorem]{Note}
	\newtheorem{notes}[theorem]{Notes}	\newtheorem{observation}[theorem]{Observation}
	\newtheorem{readingex}[theorem]{Reading exercise}
	\frenchspacing

\usetikzlibrary{positioning}
\usetikzlibrary{decorations.markings}
\usetikzlibrary{shapes.geometric}
\usetikzlibrary{patterns}
\usetikzlibrary{calc, babel}

\tikzstyle{vertex}=[circle,fill=red!24,minimum size=20pt,inner sep=0pt]
\tikzstyle{selected vertex} = [vertex, fill=red!24]
\tikzstyle{edge} = [draw,thick,-]
\tikzstyle{weight} = [font=\small]
\tikzstyle{selected edge} = [draw,line width=5pt,-,red!50]
\tikzstyle{ignored edge} = [draw,line width=5pt,-,black!20]

\pgfdeclarelayer{edgelayer}
\pgfdeclarelayer{nodelayer}
\pgfsetlayers{edgelayer,nodelayer,main}

\tikzstyle{none}=[inner sep=0pt]
\definecolor{RED}{rgb}{1.000,0.000,0.000}
\definecolor{ORANGE}{rgb}{1.000,0.700,0.000}
\definecolor{BLUE}{rgb}{0.000,0.000,1.000}
\definecolor{GREEN}{rgb}{0.000,0.700,0.300}
\definecolor{GRAY}{rgb}{0.5,0.5,0.5}
\definecolor{SHADEDGRAY}{rgb}{0.9,0.9,0.9}
\definecolor{WHITE}{rgb}{1.000,1.000,1.000}
\definecolor{BLACK}{rgb}{0.000,0.000,0.000}

\tikzstyle{whitecircle}=[circle,fill=WHITE,draw=BLACK]
\tikzstyle{whiterectangle}=[rectangle,fill=WHITE,draw=BLACK]
\tikzstyle{blackcircle}=[circle,fill=BLACK,draw=BLACK]
\tikzstyle{grayrectangle}=[rectangle,fill=GRAY,draw=BLACK]

\tikzstyle{thinarrow}=[-latex,draw=BLACK,line width=0.700]
\tikzstyle{thickedge}=[-,draw=BLACK,line width=2.500]

\tikzstyle{REDARROW}=[-latex,draw=RED,line width=2.00]
\tikzstyle{ORANGEARROW}=[-latex,draw=ORANGE,line width=2.00]
\tikzstyle{BLUEARROW}=[-latex,draw=BLUE,line width=2.00]
\tikzstyle{GREENARROW}=[-latex,draw=GREEN,line width=2.00]
\tikzstyle{GRAYARROW}=[-latex,draw=GRAY,line width=2.00]
\tikzstyle{BLACKARROW}=[-latex,draw=BLACK,line width=2.00]


	\tikzstyle{level 1}=[level distance=2.75cm, sibling distance=5.65cm]
	\tikzstyle{level 2}=[level distance=3cm, sibling distance=2.75cm]
	\tikzstyle{level 3}=[level distance=3.9cm, sibling distance=1.5cm]
	
	\tikzstyle{bag} = [text width=10em, text centered] 
	\tikzstyle{end} = [circle, minimum width=3pt,fill, inner sep=0pt]

	\title{Hamilton Cycles in Dense Regular Digraphs and Oriented Graphs}
	\author{Allan Lo\footnote{School of Mathematics, University of Birmingham, United Kingdom, Email: s.a.lo@bham.ac.uk. A.~Lo was partially supported by EPSRC, grant no. EP/V002279/1 and EP/V048287/1 (A.~Lo). There are no additional data beyond that contained within the main manuscript.} \hspace{0.2in} Viresh Patel\footnote{Korteweg de Vries Instituut voor Wiskunde, Universiteit van Amsterdam, The Netherlands, Email: v.s.patel@uva.nl. V.~Patel  was supported by the Netherlands Organisation for Scientific Research (NWO)
through the Gravitation Gravitation project NETWORKS (grant no. 024.002.003).} \hspace{0.2in} Mehmet Akif Yıldız\footnote{Korteweg de Vries Instituut voor Wiskunde, Universiteit van Amsterdam, The Netherlands. Email: m.a.yildiz@uva.nl.  M.A.~Yıldız was supported by a Marie Skłodowska-Curie Action from the EC (COFUND grant no. 945045) and by the NWO Gravitation project NETWORKS (grant no. 024.002.003). }   } \vspace{0.2in}
	
	\maketitle
	\begin{abstract}
	\no	We prove that for every $\varepsilon > 0$ there exists $n_0=n_0(\varepsilon)$ such that every regular oriented graph on $n > n_0$ vertices and degree at least $(1/4 + \varepsilon)n$ has a Hamilton cycle. This establishes an approximate version of a conjecture of Jackson from 1981. We also establish a result related to a conjecture of K{\"u}hn and Osthus about the Hamiltonicity of regular directed graphs with  suitable degree and connectivity conditions. 

		\bigskip
		
		\textbf{Keywords:} Hamilton cycle, robust expander, regular, digraph, oriented graph.
		
	\end{abstract}
	
	\section{Introduction}\label{sec:INTRO}

	\no
	A Hamilton cycle in a (directed) graph is a (directed) cycle that visits every vertex of the (directed) graph. Hamilton cycles are one of the most intensely studied structures in graph theory. There is a rich body of results that establish (best-possible) conditions  guaranteeing existence of  Hamilton cycles in (directed) graphs. Degree conditions that guarantee Hamiltonicity have been of particular interest, as well as the trade-off between degree conditions and other conditions (e.g. various types of connectivity conditions). \\
	
	\no
	In this paper, we are concerned with directed graphs (or digraphs for short) and oriented graphs. Recall that a digraph can have up to two directed edges between any pair of vertices (one in each direction), while an oriented graph can have only one.\\ 

\no	The seminal result in the area is Dirac's theorem~\cite{Dirac}, which states that every graph on $n\geq 3$ vertices with minimum degree at least $n/2$ contains a Hamilton cycle. The disjoint union of two cliques of equal size or the slightly imbalanced complete bipartite graph shows that the bound is best possible. Ghouila-Houri~\cite{GhouilaHouri} proved the corresponding result for directed graphs, which states that every digraph on $n$ vertices with minimum semi-degree (i.e.\ the smaller of the minimum in- and outdegree) at least $n/2$ contains a Hamilton cycle. The bound here is again tight by doubling the edges in the extremal examples for the graph setting. The proofs of both of these results are relatively short, while the corresponding result for oriented graphs, due to Keevash, K{\"u}hn, and Osthus~\cite{OrientedExactMinDegree} given below, is more difficult and uses the Regularity Lemma together with a stability method. Again the degree threshold is tight as demonstrated by examples given in~\cite{OrientedExactMinDegree}.  
	\begin{theorem}
	\label{thm:KKO}
	    There exists an integer $n_0$ such that any oriented graph $G$ on $n\geq n_0$ vertices with minimum semi-degree $\delta^0(G) \geq \lceil (3n - 4)/8 \rceil$ contains a Hamilton cycle.
	\end{theorem}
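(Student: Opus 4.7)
The plan is to use the stability method combined with Szemer\'edi's Regularity Lemma, which is the standard paradigm for tight extremal Hamiltonicity results of this flavour. Fix a small parameter $\alpha>0$. First I would identify the extremal oriented graphs that witness tightness of the bound $\lceil(3n-4)/8\rceil$; these are (blow-ups of) a small number of fixed oriented graphs on a few vertices, and each comes with a canonical vertex partition and a prescribed orientation pattern between the parts. Declare $G$ to be \emph{$\alpha$-extremal} if $G$ is within $\alpha n^2$ edge modifications of one of these extremal templates. The proof then splits into the extremal and non-extremal cases.

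In the non-extremal case, I would apply the digraph Regularity Lemma to $G$ with regularity parameter $\eps$ and density threshold $d$ chosen so that $\eps\ll d\ll\alpha$, producing clusters $V_1,\dots,V_k$ and a reduced oriented graph $R$. A routine calculation propagates the semi-degree condition to $\delta^0(R)\geq(3/8-\eta)k$ for some small $\eta=\eta(\eps,d)$. The next step is a structural dichotomy on $R$: either $R$ is a robust outexpander in the sense of K\"uhn--Osthus, in which case a Hamilton cycle of $R$ lifts to a Hamilton cycle of $G$ via the blow-up/absorbing machinery now standard for robust expanders; or $R$ fails robust outexpansion, so that some $S\subseteq V(R)$ has an unusually small robust out-neighbourhood. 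I would then show that any such $S$, combined with the near-extremal semi-degree condition, forces $R$ (and hence $G$ after pulling back) to be close to one of the extremal templates, contradicting non-extremality.

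In the extremal case I would argue directly. Using the $\alpha$-extremality hypothesis I would transfer the template partition $V(G)=A_1\cup\dots\cup A_t$ to $G$, identify the $O(\alpha n)$ ``bad'' vertices whose in- and out-neighbourhoods deviate significantly from the template, and then build a Hamilton cycle by the absorbing method. Concretely: first construct a short absorbing path $P_{\mathrm{abs}}$ that can swallow any small set of leftover or bad vertices; next use the near-template structure between the good vertices to produce a long path covering almost all of $V(G)\setminus V(P_{\mathrm{abs}})$ that respects the orientation pattern; finally close up by absorbing the remnants into $P_{\mathrm{abs}}$.

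The main obstacle is the stability step in the non-extremal case, namely the implication ``$R$ fails robust outexpansion $\Rightarrow$ $G$ is close to an extremal template''. The oriented setting is markedly harder than the digraph setting because one cannot freely pair opposite edges, and several flexibility arguments that comfortably handle semi-degree $n/2$ break down near the threshold $3n/8$; one must carefully exploit the exact orientation of the extremal configurations. A secondary technical difficulty is choosing the hierarchy $1/k\ll\eps\ll d\ll\eta\ll\alpha\ll 1$ so that the Regularity Lemma reduction preserves the semi-degree hypothesis and the extremality parameter faithfully in both directions of the dichotomy.
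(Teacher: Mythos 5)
This statement is not proved in the paper at all: Theorem~\ref{thm:KKO} is quoted as a known black-box result from Keevash, K\"uhn, and Osthus, and the surrounding text only remarks that the original proof ``uses the Regularity Lemma together with a stability method.'' So there is no proof in the paper against which to check your argument line by line. That said, your high-level outline (Regularity Lemma to pass to a reduced oriented digraph, dichotomy between robust outexpansion and extremal structure, and an absorbing-path argument in the extremal case) is consistent with the technique the paper attributes to the original source, and with how such tight Hamiltonicity thresholds are in fact established. Be aware, though, that what you have written is a plan rather than a proof: the two hard steps you correctly flag --- the stability implication ``non-expansion forces near-extremal structure'' under the delicate semi-degree threshold $3n/8$, and the construction of an absorbing path in the extremal oriented configurations --- are exactly where all the work lies, and neither is carried out here. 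As a description of the strategy behind Theorem~\ref{thm:KKO} your sketch is reasonable; as a proof it is not yet one, and the paper offers no internal proof to measure it against.
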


	\no Here we consider the question of minimum degree thesholds for Hamiltonicity in \emph{regular} (di)graphs possibly with some mild connectivity constraints. In this direction, for the undirected setting, Bollob\'as  and H{\"a}ggkvist (see \cite{JacksonDirected}) independently conjectured that a $t$-connected regular graph with degree at least $n/(t+1)$ is Hamiltonian. That is, the threshold for Hamiltonicity is significantly reduced compared to Dirac's Theorem if we consider regular graphs (with some relatively mild connectivity conditions). Note that the connectivity conditions without regularity is not enough to guarantee Hamiltonicity due to the slightly imbalanced complete bipartite graph.
Jackson~\cite{JacksonDirected} proved the conjecture for $t=2$, while Jung~\cite{Jung} and Jackson, Li, and Zhu~\cite{JacksonLiZhu} gave an example showing the conjecture does not hold for $t \geq 4$. Finally, K\"uhn, Lo, Osthus, and Staden~\cite{IntoTwoBipartiteExpander, kuhn2016solution} resolved the conjecture by proving the case $t=3$ for large regular graphs.
Results in \cite{PatStr} which use ideas from~\cite{IntoTwoBipartiteExpander, kuhn2016solution} also show that the algorithmic Hamilton cycle problem behaves quite differently for dense regular graphs compared to dense graphs. \\

\no
 Jackson conjectured in 1981 that, for oriented graphs, regularity alone is enough to reduce the semi-degree threshold for Hamiltonicity from $\lceil (3n-4)/8 \rceil$ in Theorem~\ref{thm:KKO} to $n/4$.  

	\begin{conjecture}[\cite{JacksonConjecture}]\label{conj:JacksonMain}
 For each $d>2$, every $d$-regular oriented graph on $n\leq 4d+1$ vertices has a Hamilton cycle.
	\end{conjecture}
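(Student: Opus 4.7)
The plan is to deduce the exact conjecture from the approximate version that forms the main theorem of this paper, by separately handling the near-extremal range $(n-1)/4 \leq d < (1/4+\eps)n$ via a stability argument and the remaining finite list of small cases by direct analysis. Fix a small constant $\eps > 0$ to be chosen later, and let $n_0 = n_0(\eps)$ be the threshold from the approximate theorem. I split the problem into three regimes: (A) $n \geq n_0$ with $d \geq (1/4+\eps)n$; (B) $n \geq n_0$ with $(n-1)/4 \leq d < (1/4+\eps)n$; and (C) $n < n_0$ with $d \geq 3$. Case (A) follows immediately from the approximate result, since the hypothesis $n \leq 4d+1$ gives $d \geq (n-1)/4$, and adding the extra $\eps n$ slack puts us in the regime covered by the main theorem.

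For the near-extremal Case (B), I would follow a Regularity Lemma plus robust-outexpander strategy in the spirit of K\"uhn--Osthus. Apply the digraph Regularity Lemma to obtain a reduced digraph $R$ of bounded order that inherits an approximate version of the regularity and density conditions from $G$. A Hamilton cycle in $G$ can be found by (i) proving that $R$ is a robust $(\nu,\tau)$-outexpander and then (ii) lifting a Hamilton cycle of $R$ to $G$ via an absorbing-path argument. The heart of the matter is a stability claim: if $G$ is $d$-regular with $d$ within $\eps n$ of $n/4$ and $R$ fails to be a robust outexpander, then $G$ must be structurally close to one of finitely many candidate extremal families (for example, a balanced blow-up of an oriented $4$-cycle, or a Cayley-type construction on $\Z/(4d{+}1)\Z$ that demonstrates tightness of the $4d+1$ bound). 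For each such near-extremal family, the Hamilton cycle must then be produced by ad hoc means that exploit the \emph{exact} $d$-regularity of $G$ rather than merely a semi-degree lower bound, e.g.\ by an explicit rotation--extension argument or by a path-cover rearrangement inside the approximately regular bipartite/blow-up structure. Case (C) reduces to finitely many pairs $(d,n)$ with $d \geq 3$ and $n \leq 4d+1 < 4n_0$; these may be dispatched either by ad hoc arguments for small $d$ or ultimately by computer enumeration, as $d$-regular oriented graphs on at most $4d+1$ vertices of bounded size form a finite family.

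The main obstacle will be Case (B): producing a clean stability theorem for $d$-regular oriented graphs that pinpoints all obstructions to robust outexpansion at the exact threshold $d = \lceil (n-1)/4 \rceil$. Unlike the semi-degree setting of Theorem~\ref{thm:KKO}, where the stability analysis is anchored by the relatively rigid bipartite-like extremal examples from~\cite{OrientedExactMinDegree}, the regular setting admits a richer and less rigid zoo of potential obstructions, and the analogous step in the undirected Bollob\'as--H\"aggkvist problem required the substantial machinery developed in~\cite{IntoTwoBipartiteExpander, kuhn2016solution}; a faithful adaptation to oriented graphs at the $n/4$ threshold is likely to be the crux of the argument. A secondary difficulty is that the bound $n_0$ coming from the approximate theorem is expected to be large (being driven by the Regularity Lemma), so Case (C) may in practice require a sharper small-$n$ analysis rather than pure enumeration in order to be feasible.
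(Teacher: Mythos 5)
You are attempting to prove Conjecture~\ref{conj:JacksonMain}, which this paper does not prove and does not claim to prove: it is stated as an open conjecture of Jackson, the paper's main contribution (Theorem~\ref{thm:MAIN-1-4-CASE}) is exactly the approximate version corresponding to your Case (A), and the concluding section explicitly lists the exact conjecture as an open problem. Your proposal does not close this gap either; it is a research programme rather than a proof, and its decisive steps are asserted, not established.

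The genuine gap is your Case (B), which is precisely the open content of the conjecture. You posit a stability claim — that a $d$-regular oriented graph with $d$ within $\eps n$ of $n/4$ which fails robust outexpansion must be structurally close to one of finitely many explicit extremal families — but you give no proof, no precise list of families, and no argument for why the list should be finite or rigid. The paper's own analysis points the other way: Lemma~\ref{lem:structure_not_expander} and the partition machinery of Sections~\ref{ch:partition}--\ref{ch:partition-to-hamilton} show only that a non-expander admits a $(4,\tau,4\nu)$-partition (and, after iteration, a $9$-partition) with few bad edges, and the class of graphs with such a partition is broad (for instance, anything close to a disjoint union of two regular tournaments joined by an essentially arbitrary sparse pattern of bad edges); the bulk of the paper consists of delicate balancing and contraction arguments needed to extract a Hamilton cycle from exactly this loose structure, and those arguments rely on the $\eps n$ degree slack (e.g.\ through Lemma~\ref{lem:HigherThanHalfGivesRobustExpander} and Corollary~\ref{cor:Higher-Than-Half-For-All-But-Few}), which vanishes at $d=\lceil (n-1)/4\rceil$. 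You acknowledge that no such stability theorem exists, so Case (B) is left entirely open, and with it the conjecture. Case (C) is also not carried out: the $n_0$ produced by the robust-expander machinery is astronomically large, and "ad hoc arguments or computer enumeration" are named but not performed. A further, more minor, point of comparison: the proof of Theorem~\ref{thm:MAIN-1-4-CASE} does not use the Regularity Lemma or absorbing paths at all — it works directly with robust outexpanders, extremal partitions, and path contraction — so a Case-(B) strategy modelled on a regularity-plus-absorption template would be a departure from, not a refinement of, the paper's method.
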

	
	\no
	Note that the disjoint union of two regular tournaments shows that the degree bound above cannot be improved. Furthermore, one cannot reduce the degree bound even if the oriented graph is strongly $2$-connected; see Proposition~\ref{thm:counterexample2connected}. 	Our main result is an approximate version of Jackson's conjecture. 

	\begin{theorem}\label{thm:MAIN-1-4-CASE}
	For every $\varepsilon>0$, there exists an integer $n_0(\varepsilon)$ such that every $d$-regular oriented graph on $n\geq n_0(\varepsilon)$ vertices with $d\geq (1/4+\eps)n$ is Hamiltonian.
\end{theorem}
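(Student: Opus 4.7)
My plan is to prove Theorem~\ref{thm:MAIN-1-4-CASE} via the dichotomy/stability method now standard for Hamiltonicity questions in dense directed graphs, as pioneered by K\"uhn, Osthus and Treglown. The idea is to show that every sufficiently large $d$-regular oriented graph $G$ on $n$ vertices with $d\ge (1/4+\eps)n$ either is a robust outexpander --- in which case a Hamilton cycle is delivered by the robust expander Hamilton cycle theorem of K\"uhn and Osthus --- or is close in edit distance to the unique extremal configuration, namely the vertex-disjoint union of two (nearly) regular tournaments on roughly $n/2$ vertices each. I would then handle these two cases separately.

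For the non-extremal case, fix a hierarchy $1/n_0\ll\nu\ll\tau\ll\eta\ll\eps$ and suppose $G$ is not $\eta$-close to any disjoint union of two regular tournaments. The goal is to verify robust outexpansion: for every $S\subseteq V(G)$ with $\tau n\le |S|\le (1-\tau)n$, the robust $\nu$-outneighbourhood of $S$ has size at least $|S|+\nu n$. Arguing by contradiction, I would use the direct regularity-based edge count
\[
e(S,V\setminus S)=d|S|-e(S,S)\ge (1/4+\eps)n|S|-\binom{|S|}{2},
\]
together with the symmetric bound $e(V\setminus S, S)\ge (1/4+\eps)n|S|-\binom{|S|}{2}$, to show that a failure of robust expansion at $S$ forces $|S|=n/2\pm O(\eta n)$, forces both $G[S]$ and $G[V\setminus S]$ to be $O(\eta n^2)$-close to tournaments, and forces only $O(\eta n^2)$ edges across the partition --- i.e.\ that $G$ must be $\eta$-close to the extremal configuration, contradicting the assumption. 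Once $G$ is known to be a robust outexpander with linear minimum semi-degree, the K\"uhn--Osthus Hamilton cycle theorem immediately applies.

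In the extremal case, I assume $G$ is $\eta$-close to a disjoint union $T_1\cup T_2$ with $|T_i|\approx n/2$ and each $T_i$ nearly regular. Since the maximum possible semi-degree inside a tournament on $n/2$ vertices is $(n/2-1)/2<(1/4+\eps)n$, regularity forces every ``typical'' vertex of $T_i$ to send and to receive at least $\eps n/2 - O(\eta n)$ edges to $T_{3-i}$, yielding at least $(\eps/4)n^2$ crossing edges in each direction. I would first absorb the $O(\eta n)$ exceptional vertices into a short absorbing path built in advance, and then choose two crossing edges $v_1 u_2$ and $v_2 u_1$ joining the cleaned-up $T_1$ and $T_2$. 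Finally, applying Hamilton-connectedness of (nearly) regular tournaments (in the style of Thomassen, with a stability version to tolerate the $O(\eta n)$ defect) to each $T_i$, I obtain internally vertex-disjoint Hamilton paths from $u_i$ to $v_i$ in each $T_i$; concatenating these with the two crossing edges and the absorbing path produces the Hamilton cycle.

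The main obstacle is the sharpness of the dichotomy at the threshold $d=n/4$: this threshold is precisely the semi-degree of the extremal configuration, so the only slack available in the stability argument is the additive $\eps n$. Every edge count in the proof of robust expansion must therefore be tight up to $O(\eps n^2)$, and the proof must genuinely exploit the regularity condition (i.e., the identity $e(S,V\setminus S)=e(V\setminus S,S)$) rather than any minimum semi-degree hypothesis above $n/4$. A secondary obstacle lies in the extremal case, where the absorbing structure must be built compatibly with the (relatively few) crossing edges ultimately used to splice the two tournament Hamilton paths; ensuring that the endpoints of the absorbing path sit at vertices with many crossing edges in both directions, and that the cleaned-up tournaments remain Hamilton-connected between the chosen endpoints, requires careful preparation in advance of the splicing step.
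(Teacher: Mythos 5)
There is a genuine gap: your claimed dichotomy is false. You assert that a non-robust-expander $d$-regular oriented graph with $d\ge(1/4+\eps)n$ must be $\eta$-close to a disjoint union of two (nearly) regular tournaments on roughly $n/2$ vertices each, and consequently that a witness set $S$ to non-expansion must satisfy $|S|=n/2\pm O(\eta n)$ with $G[S]$ and $G[V\setminus S]$ both close to tournaments. This is not true. Consider the ``blown-up directed triangle'': partition $V$ into three sets $A,B,C$ of size $n/3$, and let every edge go from $A$ to $B$, from $B$ to $C$, or from $C$ to $A$, with each of the three bipartite pieces a $d$-regular bipartite oriented graph (possible whenever $d\le n/3$, e.g.\ via a circulant). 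This is a $d$-regular oriented graph with $d\ge(1/4+\eps)n$ for any $\eps<1/12$, yet $S=A$ is a witness to non-expansion (we have $\mathrm{RN}^+_\nu(A)=B$, so $|\mathrm{RN}^+_\nu(A)|=|A|<|A|+\nu n$), and $G$ is at edit distance $\Theta(n^2)$ from any disjoint union of two tournaments since $G[A]$, $G[B]$, $G[C]$ are all empty. So the key stability lemma on which your non-extremal case rests cannot be proved, and your extremal case only handles one of several genuinely distinct non-expander configurations.

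The paper's proof begins with the same top-level dichotomy (robust expander vs.\ not) but draws a much weaker and genuinely correct structural conclusion from non-expansion: Lemma~\ref{lem:structure_not_expander} shows that a non-expander admits a $(4,\tau,4\nu)$-partition in which $V_{1*}\approx V_{*1}$ and almost all edges are ``good'', with no constraint that $|V_{1*}|\approx n/2$ or that $G[V_{1*}]$ is close to a tournament. In the oriented $(1/4+\eps)$ setting this structure is not enough on its own: after identifying endpoints and passing to the auxiliary digraphs $J_1,J_2$ (defined via $\mathcal{J}^{i}(\mathcal{P},G,\phi^i)$), one of them, say $J_2$, need not be a robust expander, and the argument must iterate, which is precisely what the blown-up triangle forces. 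The paper handles this through a $(9,\tau,\gamma)$-partition and the balancing/contraction machinery of Lemmas~\ref{lem:9-partition-path-selection}, \ref{lem:BALANCING-PARTITION}, and \ref{lem:9partitionHamiltonian}, rather than via absorbing paths and Hamilton-connectedness of tournaments. Your extremal-case splicing strategy could plausibly be made to work for the two-tournament configuration alone, but it does not address the richer family of non-expanders, so the proof as proposed does not go through.
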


    \no
    Recall that Jackson~\cite{JacksonDirected} proved the $t=2$ case of the Bollob{\'a}s--H{\"a}ggkvist conjecture, namely that every $2$-connected regular graph of degree at least $n/3$ has a Hamilton cycle. K{\"u}hn and Osthus gave a corresponding conjecture for digraphs.
	
	\begin{conjecture}[\cite{SurveyDirected}]\label{conj:directedregularHamilton}
    Every strongly $2$-connected $d$-regular digraph on $n$ vertices with $d\geq n/3$ contains a Hamilton cycle.
	\end{conjecture}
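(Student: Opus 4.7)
The plan is to combine the robust-outexpander machinery developed by Kühn and Osthus with a stability/extremal analysis that targets the exact threshold $d\geq n/3$. The overarching strategy is a dichotomy: either $G$ is a robust $(\nu,\tau)$-outexpander, in which case Hamiltonicity follows from the Kühn--Osthus Hamilton cycle theorem, or $G$ is structurally close to one of a short list of ``extremal'' configurations, which must then be handled by a direct construction using the strong $2$-connectivity hypothesis in a critical way.

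First I would identify the extremal configurations. The tight lower bound $d\geq n/3$ is witnessed by digraphs resembling blow-ups of a directed $3$-cycle, and one expects that any $G$ satisfying the hypotheses but failing to be a robust outexpander admits an approximate tripartition $V(G)=V_1\cup V_2\cup V_3$ with $|V_i|\approx n/3$ and almost all edges oriented $V_i\to V_{i+1}$. I would also need to rule out (or list) alternative near-extremal structures such as ``bipartite-like'' blow-ups with doubled edges or blow-ups of a $4$-cycle; regularity together with the semi-degree count should eliminate most alternatives. With the list fixed, I would prove a stability lemma: if $G$ is $d$-regular with $d\geq n/3$ and $G$ is not $\eta$-close (in edit distance) to any configuration on the list, then $G$ is a robust $(\nu,\tau)$-outexpander with parameters depending only on $\eta$. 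This stability step follows a standard template: assume a set $S$ with $\tau n\leq|S|\leq(1-\tau)n$ witnesses failure of expansion, then use $d$-regularity together with double counting on edges between $S$, its robust out-neighborhood, and the complement to force the tripartite structure.

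In the extremal case, I would exploit strong $2$-connectivity. For $G$ close to a blow-up of a directed $3$-cycle, almost all edges run $V_1\to V_2\to V_3\to V_1$; strong $2$-connectivity then guarantees at least two internally disjoint ``backward'' paths between suitable pairs of parts. I would combine these backward connections with long paths obtained inside each bipartite pair $(V_i,V_{i+1})$, where the forward-edge density is close to that of a nearly complete bipartite graph on about $2n/3$ vertices and is therefore Hamilton-connected in the bipartite sense. Splicing these pieces yields a Hamilton cycle, with the role of the \emph{second} internally disjoint backward path being essential: with only one we can only traverse the blow-up once and miss vertices. A short absorption step handles a small exceptional set of vertices that do not fit the tripartition cleanly, using the excess degree of vertices outside the exceptional set.

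The main obstacle I foresee is the extremal case, and specifically making the $2$-connectivity hypothesis do precisely the right amount of work at the tight threshold. At $d\geq(1/3+\varepsilon)n$, the $\varepsilon n$ extra edges at every vertex provide enough slack to build absorbers and prescribed-length linking paths automatically; at $d=n/3$ exactly, one must argue that two internally disjoint backward paths of controlled length, avoiding a prescribed small set, can always be extracted from bare strong $2$-connectivity together with the extremal structure. This is not a direct consequence of Menger's theorem and may require a case split based on where the backward edges sit inside the tripartition, together with a local swap argument to relocate backward edges if the two Menger paths share an inconvenient vertex. A secondary challenge is to ensure that the robust-outexpansion constants obtained from the stability argument are independent of any small parameter, so that the analysis does not implicitly relax the threshold back to $(1/3+\varepsilon)n$.
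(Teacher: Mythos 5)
This statement is a conjecture of K\"uhn and Osthus that the paper does \emph{not} prove --- it \emph{refutes} it. Proposition~\ref{thm:counterexample2connected} constructs, for every $n\geq 3$, a strongly $2$-connected $(n-1)$-regular digraph on $2n$ vertices with no Hamilton cycle: take two disjoint complete digraphs $G_1,G_2$ of order $n$, pick $a,b\in V(G_1)$, $c,d\in V(G_2)$, delete $ab,ba,cd,dc$ and add $ac,cb,bd,da$. Its degree is $n-1$, roughly half the number of vertices, so the hypothesis $d\geq n/3$ holds comfortably, yet any Hamilton cycle would need two \emph{non-incident} edges crossing between $V(G_1)$ and $V(G_2)$ in opposite directions, and no such pair exists. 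So no proof of the statement can be correct, and your proposal must contain an error.

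The error is located in your extremal analysis. Your stability step assumes that a $d$-regular, strongly $2$-connected digraph with $d\geq n/3$ that fails to be a robust outexpander is close to a blow-up of a directed triangle (or a short list of similar configurations), and that strong $2$-connectivity then ``guarantees at least two internally disjoint backward paths between suitable pairs of parts.'' Neither claim survives the example above: the two-clique digraph is far from any $C_3$-blow-up (its non-expansion witness produces the paper's $4$-partition with $S\approx V(G_1)$), and what one actually needs to splice spanning paths of the two dense pieces into a single Hamilton cycle is a pair of non-incident edges crossing the cut in opposite directions --- a condition strictly stronger than what Menger's theorem or strong $2$-connectivity provides, as the counterexample shows. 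This is precisely why the paper replaces strong $2$-connectivity by the ``strongly well-connected'' condition (which postulates exactly such a pair of crossing edges for every balanced-enough partition) and, even then, only proves the approximate threshold $d\geq(1/3+\eps)n$ (Theorem~\ref{thm:MAIN-1-3-CASE}); the exact threshold under the corrected connectivity hypothesis is left open in Section~\ref{ch:conclusion}.
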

	
	\no We give a counterexample to this conjecture (see Proposition~\ref{thm:counterexample2connected}), but we show that a slight modification of the conjecture is true. In particular, $2$-connectivity is replaced with the following slightly different condition.
	We call a digraph $G$ on at least four vertices \emph{strongly well-connected} if for any partition $(A,B)$ of $V(G)$ with $|A|,|B|\geq 2$, there exist two vertex-disjoint edges $ab$ and $cd$ such that $a,d\in A$ and $b,c\in B$. 
	Note that the property of being strongly well-connected and that of being strongly $2$-connected are incomparable\footnote{A directed cycle (on at least $4$ vertices) is strongly well-connected but not strongly $2$-connected; see Proposition~\ref{thm:counterexample2connected} for the converse example}; on the other hand being strongly well-connected is stronger than being strongly connected but weaker than being strongly $3$-connected.
	Our second result is an approximate version of a slightly modified statement of Conjecture~\ref{conj:directedregularHamilton}.

	\begin{theorem}\label{thm:MAIN-1-3-CASE}
	For every $\eps > 0$, there exists an integer $n_0(\varepsilon)$ such that every strongly well-connected $d$-regular digraph on $n\geq n_0(\varepsilon)$ vertices with $d\geq (1/3+\eps)n$ is Hamiltonian.
	\end{theorem}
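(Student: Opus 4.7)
\no
My plan follows the now-standard two-case strategy for degree-threshold Hamilton cycle theorems in dense directed graphs: reduce to either the robust outexpander case, handled by the Hamilton cycle theorem of K\"uhn, Osthus, and Treglown for digraphs of linear minimum semi-degree, or a near-extremal structural case, handled by a direct construction. Fix parameters $0<\nu\ll\tau\ll\eps$. The dichotomy I would set up is: either $G$ is a robust $(\nu,\tau)$-outexpander, in which case its linear minimum semi-degree gives a Hamilton cycle by the cited theorem, or there exists a witness set $S\subseteq V(G)$ with $\tau n \leq |S|\leq (1-\tau)n$ whose $\nu$-robust out-neighborhood has size less than $|S|+\nu n$.

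\no
In the non-expander branch, I would use $d$-regularity together with $d\geq(1/3+\eps)n$ and a stability-type argument to show the witness set $S$ forces a near-equitable cyclic $3$-partition $V(G)=V_1\cup V_2\cup V_3$ in which almost all edges travel cyclically $V_i\to V_{i+1}$. This is the (essentially unique) extremal structure at density $n/3$, obtained by blowing up a directed triangle. Strong well-connectedness then enters: for each of the three rotations $(V_i, V_{i+1}\cup V_{i+2})$, the hypothesis supplies two non-incident edges $ab,cd$ with $a,d\in V_i$ and $b,c\in V_{i+1}\cup V_{i+2}$. In particular, the partition must admit edges going ``backwards'' against the cyclic orientation, and not all such back-edges may share a vertex, providing the flexibility that pure cyclic structures lack.

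\no
To build a Hamilton cycle in this near-extremal regime, I would construct three long paths $P_1, P_2, P_3$, with $P_i$ covering nearly all of $V_i$ and running from $V_i$ into $V_{i+1}$, using the dense bipartite structure between consecutive classes together with an absorbing-path argument to handle the $o(n)$ uncovered vertices and short imperfections caused by the deviation from the exact extremal structure. The three back-edges supplied by strong well-connectedness then splice the three paths into a single Hamilton cycle; non-incidence guarantees the splice points can be chosen to be internally disjoint and away from the path endpoints.

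\no
The central obstacle will be the extremal/near-extremal step. Unlike strong $3$-connectivity, strong well-connectedness is only marginally stronger than strong connectivity, so one must extract maximum mileage from the existence of just a single pair of non-incident crossing edges per cut. Ensuring that each part $V_i$ is macroscopic (of size $\Omega(n)$), coordinating the three pairs of back-edges so that the splice is globally consistent, and absorbing the $o(n)$ error terms across the $\nu$-$\tau$-$\eps$ parameter hierarchy are where most of the technical work lies; a secondary difficulty is bootstrapping the ``witness set'' of Step 1 into an honest near-partition, which likely requires iterating the robust non-expansion condition to peel off the three classes.
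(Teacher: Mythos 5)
Your high-level dichotomy (robust outexpander versus structured non-expander) matches the paper, and you correctly anticipate that strong well-connectedness must supply crossing edges in the non-expander branch. But there is a genuine gap in the structural step: the non-expander case does \emph{not} reduce to a near-cyclic $3$-partition with edges travelling $V_i\to V_{i+1}$. The witness set $S$ and its robust out-neighbourhood $\mathrm{RN}^+(S)$ naturally induce the $4$-partition $V_{11}=S\cap\mathrm{RN}^+(S)$, $V_{12}=S\setminus\mathrm{RN}^+(S)$, $V_{21}=\mathrm{RN}^+(S)\setminus S$, $V_{22}=V\setminus(S\cup\mathrm{RN}^+(S))$, and regularity forces almost all edges to go from $V_{1*}=S$ into $V_{*1}=\mathrm{RN}^+(S)$ and from $V_{2*}$ into $V_{*2}$ (Lemma~\ref{lem:structure_not_expander}). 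That family is strictly larger than blow-ups of a directed triangle: it contains, for instance, the case $V_{12}=V_{21}=\emptyset$ (two nearly vertex-disjoint dense pieces with essentially no edges between them) and the case where both $E(V_{12},V_{21})$ and $E(V_{21},V_{12})$ are dense (a bipartite-like configuration). In neither of these is there any cyclic $3$-partition, so your plan of building three paths $P_1,P_2,P_3$, one per cyclic class, spliced by back-edges, has nothing to attach to. Also, "iterating the non-expansion to peel off three classes" is not what happens at the $n/3$ threshold: a single application of the dichotomy already produces contracted digraphs with minimum semi-degree above half their order, which are robust expanders; iteration is needed only for the $n/4$ (oriented) theorem.

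The paper's proof instead works directly with an extremal $(4,\tau,4\nu)$-partition and a different device. One chooses a small path system $\mathcal{Q}$ of bad edges so that contracting $\mathcal{Q}$ balances the partition, i.e.\ forces $|V_{12}'|=|V_{21}'|>0$ (Lemma~\ref{lem:BALANCING-PARTITION}); one then identifies the "out-side'' $V_{i*}'$ with the "in-side'' $V_{*i}'$ of the contracted graph to form auxiliary digraphs $J_1,J_2$, each of which has semi-degree exceeding $|J_i|/2$ for all but a few vertices and is therefore a robust expander and hence Hamiltonian (Lemma~\ref{lem:digraph-hamilton}); lifting the two Hamilton cycles back gives one in $G$. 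Strong well-connectedness is used exactly once, in the degenerate subcase $V_{12}=V_{21}=\emptyset$, to supply \emph{two} non-incident crossing edges $ab\in E(V_{11},V_{22})$ and $cd\in E(V_{22},V_{11})$; in all other subcases regularity alone produces the required bad edges. Your proposal invokes three pairs of crossing edges, one per "rotation,'' and tacitly assumes they can be chosen disjoint and globally consistent -- the hypothesis supplies pairs only one cut at a time and gives no such coordination. Replacing the cyclic-triangle picture with the $4$-partition of Lemma~\ref{lem:structure_not_expander}, and replacing absorption with the contraction-and-identification mechanism, would bring your plan in line with a correct proof.
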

	
	\no
	Note that K{\"u}hn and Osthus~\cite{SurveyDirected} give an example that shows the degree bound in Conjecture~\ref{conj:directedregularHamilton} cannot be reduced, i.e.\ an example of a strongly 2-connected regular digraph on $n$ vertices and degree close to $n/3$. The same example is easily seen to be strongly well-connected, showing that we cannot take the degree to be smaller than $n/3$ in Theorem~\ref{thm:MAIN-1-3-CASE}. \\

\no		
Our methods are based on the robust expanders technique of Kühn and Osthus which have been used to resolve a number of conjectures (see \cite{On-Kelly-Conjecture, NEWW}). Any directed dense graph that is a robust expander automatically contains a Hamilton cycle. An important part of this paper is to gain an understanding of dense directed graphs that are not robust expanders. In particular, we are able to construct vertex partitions of such digraphs with useful expansion properties. Although we do not show it directly, such partitions almost immediately allow us to construct very long cycles in the required settings (that is cycles that pass through all but a small proportion of the vertices). The remainder of the paper is devoted to giving delicate balancing arguments to obtain full Hamilton cycles.		\\

\no
The paper is organised as follows.
In the next subsection, we give the counterexample to Conjecture~\ref{conj:directedregularHamilton}.
In Section~\ref{ch:notations} we give notation, preliminaries and a sketch proof. In Section~\ref{ch:partition} we develop the necessary language of partitions and establish some of their basic properties. Section~\ref{ch:balance} is devoted mainly to giving the balancing arguments that will allow us to construct full Hamilton cycles. Section~\ref{ch:partition-to-hamilton} shows how to combine earlier results to show that dense directed and oriented graphs with certain vertex partitions contain Hamilton cycles. In Section~\ref{ch:mainproofs} we prove Theorems~\ref{thm:MAIN-1-3-CASE} and~\ref{thm:MAIN-1-4-CASE}. We pose some open problems in Section~\ref{ch:conclusion}.

\subsection{Counterexample to Conjecture~\ref{conj:directedregularHamilton}}\label{ch:counter-example}

\begin{proposition}\label{thm:counterexample2connected}
	For $n\geq 3$, there exists a strongly $2$-connected $(n-1)$-regular digraph on $2n$ vertices with no Hamilton cycle. 
	For $n \ge 3$, there exists a strongly $2$-connected $(n-1)$-regular oriented graph on $4n+2$ vertices with no Hamilton cycle.
\end{proposition}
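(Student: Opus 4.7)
The plan is to build both counterexamples from the same template: take two vertex-disjoint blobs $A$ and $B$ joined by exactly four crossing arcs placed so that any would-be Hamilton cycle is pushed into an impossible configuration of path-endpoints. Concretely, I would fix distinguished vertices $a_1,a_2\in A$ and $b_1,b_2\in B$ and use the four crossings
\[
a_1\to b_1,\qquad a_2\to b_2,\qquad b_1\to a_2,\qquad b_2\to a_1.
\]
These sit on four distinct vertex pairs, so the same crossing structure is legal in either the digraph or the oriented setting.

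For the digraph counterexample I would take $|A|=|B|=n$ and let each of $A,B$ be a complete digraph with exactly the two arcs between $a_1,a_2$ (respectively $b_1,b_2$) deleted, then add the four crossings. A direct degree count gives $(n-1)$-regularity on $2n$ vertices. Strong $2$-connectivity follows by case analysis: deleting a vertex outside $\{a_1,a_2,b_1,b_2\}$ leaves the whole crossing structure untouched, and deleting any one of $a_1,a_2,b_1,b_2$ still leaves two crossings (one in each direction) plus an internally strongly connected digraph on each side (complete digraph on at least two vertices).

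For the oriented counterexample I would use the same template with $|A|=|B|=2n+1$ and the same four crossings. This requires $A$ to be an oriented graph on $2n+1$ vertices in which $a_1,a_2$ have in- and out-degree $n-2$ and all other vertices have in- and out-degree $n-1$, so that adding the two crossing arcs incident to $a_1,a_2$ produces a globally $(n-1)$-regular oriented graph; $B$ is taken analogously. To produce $A$ I would start from a regular tournament $T$ on $2n+1$ vertices (semi-degree $n$) and delete a spanning subdigraph $F\subseteq E(T)$ whose in/out-degree sequence is $(2,2)$ at $a_1,a_2$ and $(1,1)$ elsewhere. The existence of such an $F$ inside $T$ can be established by a bipartite $b$-matching argument in the out/in double cover of $T$, or explicitly by combining a Hamilton cycle of $T$ with a short augmenting structure through $\{a_1,a_2\}$ (the construction is immediate for $n=3$ and extends). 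I expect this degree-sequence realizability step to be the main, though modest, technical obstacle; strong $2$-connectivity is verified exactly as in the digraph case.

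The non-Hamiltonicity proof is then identical in both settings and depends only on the crossing structure. Any Hamilton cycle would decompose into $k$ maximal $A$-paths alternating with $k$ maximal $B$-paths, where $k$ is the number of $A\to B$ crossings used; since only two such crossings are available, $k\in\{1,2\}$. For $k=1$, each of the four possible combinations of one $A\to B$ crossing with one $B\to A$ crossing forces either the unique $A$-path or the unique $B$-path to begin and end at the same vertex, contradicting $|A|,|B|\geq 2$. For $k=2$ all four crossings are used; then the four endpoints of the two vertex-disjoint $A$-paths must all lie in the two-element set $\{a_1,a_2\}$, but since $|A|\geq 3$, the two $A$-paths cannot both have their endpoints in $\{a_1,a_2\}$ while partitioning $A$. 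Hence no Hamilton cycle exists, completing the proof.
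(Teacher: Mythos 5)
Your digraph construction and the crossing pattern $a_1\to b_1\to a_2\to b_2\to a_1$ are exactly the paper's (up to renaming $a,b,c,d$), and your $k\in\{1,2\}$ case analysis is a correct, somewhat more verbose rendering of the paper's slicker observation: a maximal $A$-path with at least one edge has two extending arcs that must be non-incident and in opposite directions, and no such pair of crossings exists. That part is fine.

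The gap is in the oriented case, specifically the realizability step you flag yourself. You want to delete from a regular tournament $T$ on $2n+1$ vertices a spanning subdigraph $F$ with in/out-degree $(2,2)$ at $a_1,a_2$ and $(1,1)$ elsewhere. Neither method you gesture at goes through as stated. The ``Hamilton cycle plus augmenting structure'' idea fails because any additional directed cycle through $a_1,a_2$ raises the in- and out-degree of \emph{every} vertex on it, not just $a_1,a_2$, and tournaments have no $2$-cycles, so there is no length-$2$ augmentation. The ``bipartite $b$-matching in the double cover'' idea also hits an obstacle: if you start from a perfect matching in the $n$-regular bipartite double cover (giving $(1,1)$ everywhere) and try to add a matching saturating exactly $\{a_1^+,a_1^-,a_2^+,a_2^-\}$, the only candidate is $\{a_1^+a_2^-,\,a_2^+a_1^-\}$ (since $a_i^+a_i^-$ corresponds to a loop), which would require both $a_1a_2$ and $a_2a_1$ to be edges of $T$, impossible in a tournament. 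So for an \emph{arbitrary} regular tournament the claimed $F$ need not be obtainable by these routes; you must at least use the freedom to \emph{choose} $T$. The paper resolves this cleanly by constructing the tournament so that it contains two directed cycles $C_1, C_1'$ with $V(C_1)\cup V(C_1')=V(G_1)$ and $V(C_1)\cap V(C_1')=\{a,b\}$; then $F=C_1\cup C_1'$ has exactly the required degree sequence by inspection. You should either adopt that explicit construction or give a complete existence proof (for a carefully chosen $T$) in place of the hand-waved matching step; otherwise the oriented half of the proposition is not established.
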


\begin{figure}[h]
	\centering
	\includegraphics[width=8cm,angle=0,height=4cm]{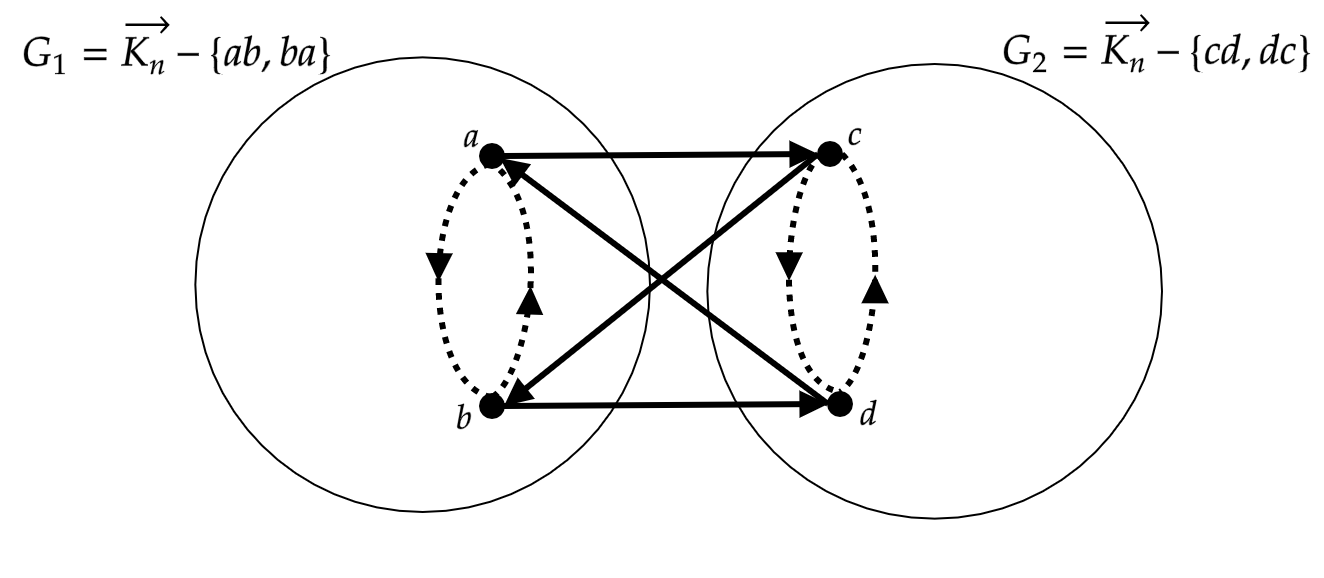}
	\caption{A strongly 2-connected $(n-1)$-regular digraph $G$ on $2n$ vertices}
	\label{fig:counter-example}
\end{figure}

\begin{proof}
	Let $G'$ be the digraph that is the disjoint union of two complete digraphs $G_1$ and $G_2$ each of size $n$. Let $a,b \in V(G_1)$ and $c,d \in V(G_2)$. Let $G$ be the digraph obtained from $G'$ by deleting the edges $ab$, $ba$, $cd$, and $dc$, and adding the edges $ac$, $cb$, $bd$, $da$ (see Figure~\ref{fig:counter-example}). It is clear that $G$ is a strongly $2$-connected $(n-1)$-regular digraph on $2n$ vertices. \\
	
	\no
	It is easy to see that $G$ has no Hamilton cycle. Indeed, any Hamilton cycle $H$ of $G$ must use at least one edge inside one of the cliques (since $n\geq 3$). Let $P$ be a maximal path of $H$ inside one of the cliques (say $G_1$) with at least one edge. Let $e$ and $e'$ be the edges of $H$ that extend $P$ into $G_2$. Then $e$ and $e'$ must be vertex-disjoint edges that cross between $G_1$ and $G_2$ in opposite directions. But $G$ does not have such a pair of edges. \\

\no 
The oriented graph is constructed similarly. 
It is easy to construct a regular tournament of order~$2n+1$ that contains two cycles that together span the tournament and which have exactly two vertices in common. Indeed, we start with the two directed cycles with common vertices say $a$ and $b$. 
The (undirected) complement is Eulerian, that is, all vertices have even degree, and so we orient these edges using an Euler tour.  
This gives the desired tournament. \\

 \no
	Let $G'$ be the disjoint union of two such regular tournaments $G_1$ and $G_2$ each of order $2n+1$.
	Let $C_1$ and $C_1'$ be the two directed cycles in $G_1$ such that $V(C_1) \cup V(C_1') = V(G_1)$ and $V(C_1) \cap V(C_1') = \{a,b\}$. Similarly, let $C_2$ and $C_2'$ be two directed cycles in $G_2$ such that $V(C_2) \cup V(C_2') = V(G_2)$ and $V(C_2) \cap V(C_2') = \{c,d\}$.	Let $G$ be obtained from $G'$ by deleting the edges of $C_1 \cup C_1'\cup C_2 \cup C_2'$, and adding the edges $ac$, $cb$, $bd$, $da$. 
	It is easy to check that $G$ is a strongly $2$-connected, $(n-1)$-regular, oriented graph on $4n+2$ vertices. Note that $G$ is not Hamiltonian by a similar argument as above.
	\end{proof}

	\section{Notation and preliminaries}\label{ch:notations}

	 \no Throughout the paper, we use standard graph theory notation and terminology.
	 For a digraph $G$, we denote its vertex set by $V(G)$ and its edge set $E(G)$. For $a,b \in V(G)$, we write $ab$ for the directed edge from $a$ to $b$. We sometimes write $|G|$ for the number of vertices in $G$ and $e(G)$ for the number of edges in $G$. We write $H \subseteq G$ to mean $H$ is a subdigraph of $G$, i.e.\ $V(H) \subseteq V(G)$ and $E(H) \subseteq E(G)$. We sometimes think of $F \subseteq E(G)$ as a subdigraph of $G$ with vertex set consisting of those vertices incident to edges in $F$ and edge set $F$.
	 For $S \subseteq V(G)$, we write $G[S]$ for the subdigraph of $G$ induced by $S$ and $G-S$ for the digraph $G[V(G) \setminus S]$. For $A,B \subseteq V(G)$ not necessarily disjoint, we define $E_G(A,B):= \{ab \in E(G): a \in A, \; b \in B \}$ and we write $G[A,B]$ for the graph with vertex set $A \cup B$ and edge set $E_G(A,B)$. We write $e_G(A,B) := |E_G(A,B)|$. We often drop subscripts if these are clear from context. For two digraphs $H_1$ and $H_2$, the union $H_1 \cup H_2$ is the digraph with vertex set $V(H_1) \cup V(H_2)$ and edge set $E(H_1) \cup E(H_2)$. 
	 We say that an undirected graph $G$ is bipartite with bipartition $(A,B)$ if $V(G)=A\cup B$ and $E(G) \subseteq \{ab: a \in A, \; b \in B \}$.\\

	 \no For a digraph $G$ and $v\in V(G)$, we denote the set of outneighbours and inneighbours of $v$ by $N_G^{+}(v)$ and $N_G^{-}(v)$ respectively, and we write $d_G^{+}(v)=|N_G^{+}(v)|$ and $d_G^{-}(v)=|N_G^{-}(v)|$ for the out- and indegree of $v$ respectively.  
	 For $S\subseteq V(G)$ we write $d_S^{-}(v):= |N_G^-(v) \cap S|$ and $d_S^{+}(v):= |N_G^+(v) \cap S|$. We write $\delta^+(G)$ and $\delta^-(G)$ respectively for the minimum out- and indegree of $G$, and $\delta^{0}(G):=\min\{\delta^{+}(G),\delta^{-}(G)\}$ for the minimum semi-degree.
	 Similarly, the maximum semi-degree $\Delta^{0}(G)$ of $G$ is defined by $\Delta^{0}(G):=\max\{\Delta^{+}(G),\Delta^{-}(G)\}$ where $\Delta^{+}(G)$ and $\Delta^{-}(G)$ denote the maximum out- and maximum indegree of $G$ respectively. A digraph is called $d$-regular if each vertex has exactly $d$ outneighbours and $d$ inneighbours. For undirected graphs $G$, we write $\Delta(G)$ and $\delta(G)$ respectively for the maximum degree and the minimum degree. A graph is called $d$-regular if each vertex has exactly $d$ neighbours. 
	 \\

	\no A directed path $Q$ in a digraph $G$ is a subdigraph of $G$ where $V(Q) = \{v_1, \ldots, v_k \}$ for some $k \in \mathbb{N}$ and where $E(Q) = \{v_1v_2, v_2v_3, \ldots, v_{k-1}v_k \}$. A directed cycle in $G$ is exactly the same except that it also includes the edge $v_kv_1$. 
	A set of vertex-disjoint directed paths $\mathcal{Q}=\{Q_1,Q_2,\ldots\}$ in $G$ is called a \emph{path system} in $G$. We interchangeably think of $\mathcal{Q}$ as a set of vertex-disjoint directed paths in $G$ and as a subgraph of $G$ with vertex set $V(\mathcal{Q}) = \cup_{i} V(Q_i)$ and edge set $E(\mathcal{Q}) = \cup_{i} E(Q_i)$. We sometimes call this subgraph the graph induced by $\mathcal{Q}$.  
	A matching $M$ in a digraph (or undirected graph) $G$ is a set of edges $M \subseteq E(G)$ such that every vertex of $G$ is incident to at most one edge in $M$. We say that a matching $M$ \emph{covers} $S \subseteq V(G)$ if every vertex in $S$ is incident to some edge in $M$.  \\

	\no For two sets $A$ and $B$, the \textit{symmetric difference} of $A$ and $B$ is the set $A\triangle B := (A\setminus B) \cup (B\setminus A)$.
	For $k\in\mathbb{N}$, we sometimes denote the set $\{1,2,\ldots,k\}$ by $[k]$. 
	For $x, y \in (0,1]$, we often use the notation $x \ll y$ to mean that $x$ is sufficiently small as a function of $y$ i.e.\ $x \leq f(y)$ for some implicitly given non-decreasing function $f:(0,1] \rightarrow (0,1]$. \\

	\subsection{Tools}
	
	\no 
	We will require Vizing's theorem for multigraphs in the proof of Lemma~\ref{lem:MATHCING_LEMMA_I}. Let $H$ be an (undirected) multigraph (without loops). The multiplicity $\mu(H)$ of $H$ is maximum number of edges between two vertices of $H$, and, as usual, $\Delta(H)$ is the maximum degree of $H$.
	A proper $k$-edge-colouring of $H$ is an assignment of $k$ colours to the edges of $H$ such that incident edges receive different colours.
	
	\begin{theorem}[\cite{Vizing}; see e.g.\ \cite{Diestel}]
	\label{thm:Vizing}
	Any multigraph $H$ has a proper $k$-edge colouring with $k = \Delta(H) + \mu(H)$ colours. In particular, by taking the largest colour class, there is a matching in $H$ of size at least $e(H)/(\Delta(H)+\mu(H))$.
	\end{theorem}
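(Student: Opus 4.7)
My plan is to prove the statement by induction on $e(H)$, with $k := \Delta(H) + \mu(H)$. The base case is vacuous. For the inductive step, fix an edge $e = xy \in E(H)$ and apply the induction hypothesis to $H - e$ to obtain a proper $k$-edge-colouring $\varphi$ of $H - e$. For any vertex $v$, call a colour \emph{missing at $v$} if no edge incident to $v$ in $H - e$ uses it. Since $v$ is incident to at most $\Delta(H)$ edges, at least $k - \Delta(H) = \mu(H)$ colours are missing at $v$. If some colour is missing at both $x$ and $y$, assigning it to $e$ completes the colouring, so assume otherwise.

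The strategy is to perform a sequence of local recolourings so that either a common missing colour emerges or some colour at $x$ can be freed for $e$. I would use a \emph{Vizing fan} at $x$: starting from $y_0 := y$, iteratively pick a colour $\alpha_i$ missing at $y_i$ and let $y_{i+1}$ be the endpoint of the $\alpha_i$-coloured edge at $x$ (which must exist, since $\alpha_i$ is not missing at $x$). Since $x$ has finite degree and each $y_i$ has at least $\mu(H)$ missing colours, the process either terminates in a $y_j$ sharing a missing colour with $x$ (in which case we shift colours along $xy_0, xy_1, \ldots, xy_j$ and free a colour for $e$), or it enters a \emph{closed} configuration where every missing colour at each $y_i$ is already used at $x$.

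The main obstacle is the closed case, which I would resolve via a Kempe chain argument. Fix a colour $\alpha$ missing at $x$ and, for a suitably chosen index $j$, a colour $\beta$ missing at $y_j$. Consider the subgraph $H_{\alpha,\beta}$ of $H - e$ consisting of edges coloured $\alpha$ or $\beta$; each of its components is a path or even cycle, so swapping $\alpha$ and $\beta$ along the component containing $x$ preserves properness. Because $y_j$ misses at least $\mu(H)$ colours, we have enough room to select $\beta$ so that no parallel edge between $x$ and $y_j$ obstructs the swap — this is precisely where the extra $\mu(H)-1$ colours (beyond the simple-graph bound of $\Delta(H)+1$) are essential. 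After the swap, either $x$ and $y_j$ share a missing colour, or the fan configuration has been strictly simplified, so iterating finitely often closes the argument. Finally, the ``in particular'' statement is immediate: the $k$ colour classes partition $E(H)$ into matchings, hence the largest class has at least $e(H)/k$ edges by pigeonhole.
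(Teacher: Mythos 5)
The paper does not actually prove this theorem: it is stated as a known result and cited to Vizing and to Diestel's textbook, so there is no in-paper proof to compare against. You are reconstructing the argument from scratch, and the question is simply whether your sketch is correct. Your framework (induction on $e(H)$, a Vizing fan at $x$, a Kempe-chain recolouring in the closed case, and pigeonhole for the matching consequence) is the standard one, and the ``in particular'' part is fine.

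However, the closed case --- which is where essentially all the work lies --- is only gestured at and is not correct as written. First, you leave the choice of $j$ and $\beta$ unspecified, but this choice is the whole point: the standard argument exploits the fact that when the fan closes, some colour $\beta$ missing at the terminal fan vertex coincides with the colour of an earlier fan edge $xy_t$, and hence $\beta$ is missing at \emph{two} distinct fan vertices $y_{t-1}$ and $y_s$. Since an $(\alpha,\beta)$-Kempe component that is a path has only two endpoints, the component through $x$ can contain at most one of $y_{t-1}$ and $y_s$, and one then swaps the component through the \emph{other} vertex and shifts the corresponding subfan. Your sketch never invokes this two-endpoint pigeonhole, which is the actual engine of the proof. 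Second, you propose to swap on the component containing $x$ itself; tracing this through, if that component is a path ending at $y_j$, then after the swap $\beta$ is missing at $x$ but now present at $y_j$, while $\alpha$ is missing at $y_j$ but present at $x$ --- still no common missing colour, and it is not clear what has ``strictly simplified.'' Third, the claim that the extra $\mu(H)-1$ colours allow you to pick $\beta$ so that ``no parallel edge between $x$ and $y_j$ obstructs the swap'' is not where multiplicity actually enters; the delicate point in the multigraph case is ensuring the fan itself is well-defined and can be shifted when vertices repeat (which requires the ``multifan'' bookkeeping, and is exactly why the $\Delta+\mu$ bound is harder than the simple-graph $\Delta+1$). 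Finally, ``iterating finitely often closes the argument'' is asserted without a decreasing quantity; in the correct proof a single Kempe swap plus a shift suffices, so if your version genuinely needs iteration you would need to exhibit a monovariant. None of this is fatal to the approach --- the theorem is certainly true and the fan/Kempe route is the right one --- but as written the closed case is a gap, not a proof.
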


	\no
	In Lemma~\ref{lem:MATCHING_LEMMA_II}, we will require a Chernoff inequality for bounding the tail probabilities of binomial random variables.
	For a random variable $X$, write $\mathbb{E}[X]$ for the expectation of $X$. 
	We write $X \sim {\rm Bin}(n,p)$ to mean that $X$ is distributed as a binomial random variable with parameters $n$ and $p$, that is a random variable that counts the number of heads in $n$ independent coin flips where the probability of heads is $p$. In that case we have $\mathbb{E}[X]=np$ and the following bound. 	
	\begin{theorem}[see~\cite{Chernoff}]
	\label{thm:Chernoff}
	Suppose $X_1,X_2,\ldots,X_n$ are independent random variables taking values in $\{0,1\}$ and $X = X_1 + \cdots + X_n$. Then,  for all $0\leq \delta\leq 1$, we have
	\begin{equation*}
	\mathbb{P}(X\leq (1-\delta)\mathbb{E}(X))\leq \exp{\left(-\delta^2 \mathbb{E}(X)/2\right)}.
	\end{equation*}
	In particular, this holds for $X \sim {\rm Bin}(n,p)$. 
 \end{theorem}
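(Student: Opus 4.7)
The plan is to apply the standard Cram\'er--Chernoff (exponential moment) method to $-X$. The key idea is that Markov's inequality applied to $e^{-tX}$ converts the lower-tail probability into an upper bound involving the moment generating function, which factorises by independence.

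First, I fix $t>0$ (to be optimised later) and write
\[
\mathbb{P}(X\leq (1-\delta)\mu) = \mathbb{P}(e^{-tX}\geq e^{-t(1-\delta)\mu}) \leq e^{t(1-\delta)\mu}\,\mathbb{E}[e^{-tX}],
\]
where $\mu := \mathbb{E}[X]$. By independence and the elementary bound $1+x\leq e^x$, setting $p_i:=\mathbb{P}(X_i=1)$ gives
\[
\mathbb{E}[e^{-tX}] = \prod_{i}\bigl(1-p_i(1-e^{-t})\bigr) \leq \exp\bigl(-\mu(1-e^{-t})\bigr),
\]
so combining yields $\mathbb{P}(X\leq (1-\delta)\mu) \leq \exp(\mu[t(1-\delta)-(1-e^{-t})])$.

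Next, I would optimise by taking $t=-\ln(1-\delta)$, which is positive on $(0,1)$; the boundary case $\delta=1$ is handled by a direct calculation $\mathbb{P}(X=0)=\prod_i(1-p_i)\leq e^{-\mu}$. This choice gives $e^{-t}=1-\delta$ and reduces the exponent to $-\mu\bigl[(1-\delta)\ln(1-\delta)+\delta\bigr]$. To match the stated $\exp(-\delta^2\mu/2)$ form, the final step is the elementary calculus inequality $(1-\delta)\ln(1-\delta)+\delta \geq \delta^2/2$ for $\delta\in[0,1]$: both sides vanish at $\delta=0$, and a derivative comparison reduces the claim to $-\ln(1-\delta)\geq\delta$, i.e.\ $1-\delta\leq e^{-\delta}$, which is standard.

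The main obstacle here is essentially nil: this is a classical textbook Chernoff bound and the proof above is the standard derivation. Since the statement is quoted from the literature, the only real decisions are presentational, in particular choosing the weaker but cleaner $\delta^2/2$ exponent over the sharper Kullback--Leibler form $\mu\bigl[(1-\delta)\ln(1-\delta)+\delta\bigr]$, and choosing to prove the one-sided lower-tail version actually needed in Lemma~\ref{lem:MATCHING_LEMMA_II}.
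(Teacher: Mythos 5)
The paper does not prove Theorem~\ref{thm:Chernoff}; it is stated as a standard tool and cited directly from Mitzenmacher and Upfal. Your proof is the classical Cram\'er--Chernoff derivation (Markov on $e^{-tX}$, factorise by independence, bound $1-x\leq e^{-x}$, optimise $t=-\ln(1-\delta)$, and then weaken the Kullback--Leibler exponent to $\delta^2/2$ via the calculus inequality $(1-\delta)\ln(1-\delta)+\delta\geq\delta^2/2$), and every step checks out, including the boundary case $\delta=1$. Since the paper offers no proof to compare against, there is nothing to reconcile: you have simply supplied the standard textbook argument the paper implicitly relies on.
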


	\subsection{Robust expanders}
	\no In this subsection we define robust expanders and discuss some of their useful properties. 
	
	\begin{definition}\label{def:RobustExpander}

	 Fix a digraph $G$ on $n$ vertices and parameters $0<\nu < \tau <1$. For $S\subseteq V(G)$, the \emph{robust $\nu$-outneighbourhood of $S$} is the set \emph{$\text{RN}_{\nu}^{+}(S) := \{v \in V(G): |N^-_G(v) \cap S| \geq \nu n \}$}. We say $G$ is a \emph{robust $(\nu,\tau)$-outexpander} if \emph{$|\text{RN}_{\nu}^{+}(S)|\geq |S|+\nu n$} for all subsets $S\subseteq V(G)$ satisfying $\tau n\leq |S|\leq (1-\tau)n$.
 	\end{definition}

\no If the constant $\nu$ used is clear from  context, we write $\text{RN}^{+}(S)$. The notion of robust expansion has been key to proving numerous conjectures about Hamilton cycles. One of the starting points is the following seminal result which states that robust expanders with certain minimum degree condition are Hamiltonian.
	
\begin{theorem}[\cite{RobustExpanderHamilton}; see also \cite{LoPatel}]
\label{thm:RobustExpanderImpliesHamilton}
Let $1/n \ll \nu\leq \tau\ll\gamma<1$. If $G$ is an $n$-vertex digraph  with $\delta^{0}(G)\geq \gamma n$ such that $G$ is a robust $(\nu,\tau)$-outexpander, then $G$ contains a Hamilton cycle.
\end{theorem}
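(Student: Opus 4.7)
The plan is to combine the Directed Szemer\'edi Regularity Lemma with an absorption method, which is the standard route to Hamiltonicity results via robust expansion. First, I would apply the Diregularity Lemma to $G$ with parameters $\varepsilon, d$ satisfying $\varepsilon \ll d \ll \nu$, obtaining a partition $V_0, V_1, \ldots, V_k$ of $V(G)$ into an exceptional set $V_0$ of size at most $\varepsilon n$ together with equal-sized clusters $V_1, \ldots, V_k$, and a reduced digraph $R$ on $[k]$ whose edges encode the $\varepsilon$-regular bipartite pairs of density at least $d$. A standard averaging argument transfers the hypotheses from $G$ to $R$: the minimum semi-degree becomes $\delta^0(R) \geq (\gamma - 2d)k$, and $R$ remains a robust $(\nu/2, 2\tau)$-outexpander.

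Next, I would set aside an \emph{absorbing structure} inside $G$: a short path $P_{\text{abs}}$ such that for every sufficiently small set $U \subseteq V(G) \setminus V(P_{\text{abs}})$ there is an alternative path $P'_{\text{abs}}$ with the same endpoints as $P_{\text{abs}}$ and $V(P'_{\text{abs}}) = V(P_{\text{abs}}) \cup U$. Such a path can be assembled from many vertex-disjoint local absorbers constructed via the minimum semi-degree bound $\delta^0(G) \geq \gamma n$ together with a standard probabilistic selection, ensuring that every $v \in V(G)$ has $\Theta(n)$ candidate absorbers capable of incorporating it.

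The third step would be to find a Hamilton cycle in the reduced graph $R$ and lift it to an almost-spanning cycle in $G$. Since $R$ is a robust outexpander with linear minimum semi-degree on a constant number of vertices, one can find a Hamilton cycle in $R$ by adapting the P\'osa rotation-extension argument to digraphs, or by merging two suitably chosen $1$-factors, using the robust expansion each time a connection or closure is needed. After making each cluster-pair along this cycle super-regular by moving low-degree vertices into the exceptional set, the Blow-Up Lemma for digraphs yields a cycle in $G$ covering $V(G) \setminus (V_0 \cup V(P_{\text{abs}}))$, with its endpoints chosen compatibly with the endpoints of $P_{\text{abs}}$.

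Finally, the leftover vertices in $V_0$ would be absorbed into this cycle via the absorbing path, producing a Hamilton cycle in $G$. The main obstacle is the absorption step: one must build an absorbing structure flexible enough to absorb any small leftover set while preserving fixed endpoints that can be connected to the almost-spanning cycle. This is where both the minimum semi-degree condition and the robust expansion become essential, since expansion provides the short connecting paths through each gadget, and the degree condition guarantees the abundance of local absorbers at every vertex needed to handle any eventual remainder in $V_0$.
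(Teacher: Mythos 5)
The paper does not prove Theorem~\ref{thm:RobustExpanderImpliesHamilton}; it is taken as a black box from \cite{RobustExpanderHamilton} (see also \cite{LoPatel}). Your outline broadly follows the Diregularity--Blow-up framework of \cite{RobustExpanderHamilton}, but the absorbing-path component you graft onto it is not what those authors do, and that component contains a genuine gap.

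The problem is the claim that ``every $v\in V(G)$ has $\Theta(n)$ candidate absorbers,'' which you propose to extract from $\delta^0(G)\ge \gamma n$ alone. The natural single-edge absorber for a vertex $v$ is a pair $(a,b)$ with $a\to v$, $v\to b$ and $ab\in E(G)$, so the number of such absorbers is $e\bigl(N^-(v),N^+(v)\bigr)$. With $\gamma$ only a small constant (the theorem permits $\tau\ll\gamma$, not $\gamma$ close to $1/2$), this count can be zero: nothing in the hypotheses forbids a vertex whose in- and out-neighbourhoods span no edges at all. Robust expansion does rescue the situation, but only indirectly: iterating $\text{RN}^+_\nu$ starting from $N^-(v)$ covers all but $\tau n$ vertices after $O(1/\nu)$ steps, which yields many \emph{short paths} from $N^-(v)$ to $N^+(v)$, not edges. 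One therefore needs bounded-length absorbing gadgets built from these connecting paths, together with a careful argument that many such gadgets can be chosen vertex-disjointly and then strung into a single absorbing path whose endpoints remain free to be joined to the almost-spanning cycle. None of this is automatic, and it is precisely to avoid these difficulties that \cite{RobustExpanderHamilton} instead handle the exceptional set $V_0$ by redistributing its vertices into the clusters and repairing the resulting size imbalances via shifted walks in the reduced digraph before invoking the Blow-up Lemma, while \cite{LoPatel} give a regularity-free proof by directly growing and closing a long path via the expansion property.

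A secondary point: P\'osa rotation--extension does not transfer to digraphs, since the rotation move relies on an undirected edge back into the interior of the path. Your alternative suggestion of merging $1$-factors of the reduced digraph using robust expansion is much closer to what is actually done and should be the one you develop.
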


\no The following straightforward lemma shows that robust expansion is a ``robust" property, i.e.\ if $G$ is a robust $(\nu,\tau)$-outexpander, then adding or deleting a small number of vertices results in another robust outexpander with slightly worse parameters. 
	
	\begin{lemma}[\cite{IntoTwoBipartiteExpander}]
	\label{lem:SYMMETRIC-DIFFERENCE}
	 Let $0<\nu\ll\tau\ll 1$. Suppose that $G$ is a digraph and $U,U'\subseteq V(G)$ are such that $G[U]$ is a robust $(\nu,\tau)$-outexpander and $|U\triangle U'|\leq \nu |U|/2$. Then, $G[U']$ is a robust $(\nu/2,2\tau)$-outexpander.
	\end{lemma}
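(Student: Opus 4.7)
The plan is to take an arbitrary set $S \subseteq U'$ with $2\tau|U'| \leq |S| \leq (1-2\tau)|U'|$, project it onto $U$ to obtain $S^*:= S \cap U$, apply the robust expansion of $G[U]$ to $S^*$, and then transfer the robust outneighbourhood back to $G[U']$. Throughout, write $n=|U|$, $n'=|U'|$, $a=|U\setminus U'|$ and $b=|U'\setminus U|$, so that $a+b\leq \nu n/2$ and $n' = n+b-a$; in particular $|n-n'|\leq \nu n/2$ and so $n'\leq 2n$.

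First I would verify that $S^*$ lies in the range where the robust expansion hypothesis for $G[U]$ applies. Since $S^* \subseteq S$ differs from $S$ by at most $b \leq \nu n/2$ vertices, we have $|S^*|\geq |S|-\nu n/2 \geq 2\tau n'-\nu n/2$ and $|S^*|\leq |S|\leq (1-2\tau)n'$. Using $|n-n'|\leq \nu n/2$ together with $\nu\ll \tau$, a short calculation yields $\tau n \leq |S^*|\leq (1-\tau) n$, so the hypothesis applies and gives
\begin{equation*}
|\mathrm{RN}^{+}_{\nu, G[U]}(S^*)|\geq |S^*|+\nu n.
\end{equation*}

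Next I would transfer this to $G[U']$. Set $T:=\mathrm{RN}^{+}_{\nu,G[U]}(S^*)\cap U'$. For any $v\in T$, since $S^*\subseteq S$ we have $|N^{-}_{G[U']}(v)\cap S|\geq |N^{-}_G(v)\cap S^*|\geq \nu n\geq (\nu/2)n'$, the last inequality using $n'\leq 2n$. Hence $T\subseteq \mathrm{RN}^{+}_{\nu/2,G[U']}(S)$, which handles the \emph{quality} of the neighbours. For the \emph{size} of $T$, we lose at most $a$ vertices from $U\setminus U'$ and the passage from $S$ to $S^*$ costs at most $b$, giving
\begin{equation*}
|T|\;\geq\; |S^*|+\nu n-a\;\geq\; |S|-b+\nu n - a\;=\;|S|+\nu n-(a+b).
\end{equation*}
Comparing with the required bound $|S|+(\nu/2)n' = |S|+(\nu/2)(n+b-a)$ reduces matters to checking $a(1-\nu/2)+b(1+\nu/2)\leq \nu n/2$, which holds comfortably under $a+b\leq \nu n/2$ and $\nu\ll 1$ (with room to spare; tightening a constant in the assumption on $|U\triangle U'|$ if needed, which is absorbed by $\nu\ll\tau$).

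The main thing to be careful about is the bookkeeping of the two directions of symmetric difference: vertices in $U'\setminus U$ cannot be addressed by the expansion of $G[U]$ and so are lost from $S$ when passing to $S^*$, while vertices in $U\setminus U'$ may lie in the robust neighbourhood in $G[U]$ but not in $U'$. The proof's only real content is that both losses together amount to at most $|U\triangle U'|\leq \nu n/2$, which is exactly the slack offered by weakening the expansion parameter from $\nu$ to $\nu/2$ and the density bounds from $\tau$ to $2\tau$.
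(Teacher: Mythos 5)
The paper does not prove this lemma (it cites it from~\cite{IntoTwoBipartiteExpander}), so there is no in-paper proof to compare against; I will evaluate correctness on its own terms. Your approach is the standard and essentially correct one: project $S$ to $S^*:=S\cap U$, check that $S^*$ lies in the range where the expansion hypothesis for $G[U]$ applies, expand, and intersect back with $U'$. The range verification for $S^*$ and the ``quality'' check (that every $v\in T$ has at least $\nu n \geq (\nu/2)n'$ in-neighbours in $S$ within $G[U']$) are correct.

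However, the final ``quantity'' check has a genuine, if small, arithmetic gap. You reduce matters to the inequality $a(1-\nu/2)+b(1+\nu/2)\leq \nu n/2$, but this does \emph{not} follow from $a+b\leq \nu n/2$ alone: writing $a(1-\nu/2)+b(1+\nu/2)=(a+b)+\tfrac{\nu}{2}(b-a)$, the worst case $a=0$, $b=\nu n/2$ gives $\nu n/2 + \nu^2 n/4$, exceeding the target by $\nu^2 n/4$. Equivalently, what your computation delivers is $|T|\geq |S|+\nu n-(a+b)\geq |S|+\nu n/2$, whereas the conclusion requires $|S|+(\nu/2)|U'|$, and $(\nu/2)|U'|$ can be as large as $\nu n/2+\nu^2 n/4$ when $|U'\setminus U|>|U\setminus U'|$. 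Your parenthetical remark that this is ``absorbed by $\nu\ll\tau$'' is not correct: $\tau$ plays no role in this particular inequality, and the hypothesis bound on $|U\triangle U'|$ is part of the statement and cannot be tightened after the fact. The shortfall is only $O(\nu^2 n)$ and so harmless in every application, but a clean proof needs to close it --- e.g.\ by replacing the constant $1/2$ in the hypothesis with something slightly smaller, or by weakening the conclusion's expansion constant slightly below $\nu/2$, either of which is what the cited reference effectively provides room for. As written, the last step is a hand-wave rather than a proof.
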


\no By taking $(U,U')=(V(G)-S,V(G))$, Lemma~\ref{lem:SYMMETRIC-DIFFERENCE} has the following corollary.

	\begin{corollary}\label{cor:AddingSmallPartIntoRobustExpander}
	Let $1/n \ll \nu \ll \tau \ll 1$.
		If $G$ is an $n$-vertex digraph and $S\subset V(G)$ such that $|S|\leq \nu |G|/2$ and $G-S$ is a robust $(\nu,\tau)$-outexpander then $G$ is a robust $(\nu/2,2\tau)$-outexpander.
	\end{corollary}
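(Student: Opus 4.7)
The plan is to apply Lemma~\ref{lem:SYMMETRIC-DIFFERENCE} directly with the specific choice $U := V(G) \setminus S$ and $U' := V(G)$, as hinted in the sentence immediately preceding the corollary. The corollary is essentially a reformulation of Lemma~\ref{lem:SYMMETRIC-DIFFERENCE} in the special case where $U'$ is obtained from $U$ by re-inserting a small set $S$ of deleted vertices.

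To carry this out, I would first verify the two hypotheses of Lemma~\ref{lem:SYMMETRIC-DIFFERENCE}. The first hypothesis, that $G[U] = G-S$ is a robust $(\nu,\tau)$-outexpander, is given by assumption. For the second, note that $U \subseteq U'$, so the symmetric difference is simply $U \triangle U' = S$. The bound $|S| \leq \nu |G|/2$ combined with $\nu \ll 1$ gives $|U| = |G| - |S| \geq (1 - \nu/2)|G|$, so $|U \triangle U'| = |S| \leq \nu |U|/2$ after a harmless adjustment of constants of the kind absorbed by the hierarchy $1/n \ll \nu \ll \tau \ll 1$.

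Applying Lemma~\ref{lem:SYMMETRIC-DIFFERENCE} then yields that $G[U'] = G$ is a robust $(\nu/2, 2\tau)$-outexpander, which is precisely the conclusion to be established. There is no substantive obstacle: the statement is a short corollary whose entire content is the verification that the quantitative size bound $|S| \leq \nu|G|/2$ translates (up to negligible lower-order factors) into the size bound $|U \triangle U'| \leq \nu|U|/2$ required by Lemma~\ref{lem:SYMMETRIC-DIFFERENCE}. The only minor point to watch is the discrepancy between $|G|$ and $|U|$ in the denominators of these bounds, which is controlled by the smallness of $\nu$ and is fully absorbed by the parameter degradation $(\nu,\tau) \mapsto (\nu/2, 2\tau)$ in the conclusion.
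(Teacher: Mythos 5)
Your proposal is exactly the paper's intended proof: the paper itself prefaces the corollary with ``By taking $(U,U')=(V(G)-S,V(G))$, Lemma~\ref{lem:SYMMETRIC-DIFFERENCE} has the following corollary,'' and you carry out precisely this substitution, correctly noting that $U \triangle U' = S$ and that the small mismatch between $|S|\leq \nu|G|/2$ and the required $|S|\leq\nu|U|/2$ (an $O(\nu^2 n)$ slack, since $|U|=|G|-|S|$) is the only thing to check. Your proof matches the paper's in both approach and level of detail.
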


	\no The next lemma shows that any digraph $G$ with minimum semi-degree slightly higher than $|G|/2$ is a robust outexpander.  

	\begin{lemma}[\cite{On-Kelly-Conjecture}]\label{lem:HigherThanHalfGivesRobustExpander}
	Let $0<\nu\leq \tau\leq \eps<1$ be constants such that $\eps\geq 2\nu/\tau$. Let $G$ be a digraph on $n$ vertices with $\delta^{0}(G)\geq (1/2+\eps)n$. Then, $G$ is a robust $(\nu,\tau)$-outexpander.
	\end{lemma}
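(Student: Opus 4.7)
The plan is to fix an arbitrary $S \subseteq V(G)$ with $\tau n \leq |S| \leq (1-\tau)n$, set $R := \text{RN}^+_\nu(S)$, and show that $|R| \geq |S| + \nu n$. First I would record the mild but useful observation that the hypotheses $\eps \geq 2\nu/\tau$ and $\eps < 1$ force $\tau > 2\nu$, which in particular will give $|S| > \nu n$ and allow a division step later.

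The proof then splits by the size of $S$. In the large-$S$ regime $|S| \geq (1/2 - \eps + \nu)n$, the minimum indegree alone suffices: for every $v \in V(G)$,
\[
|N^-(v) \cap S| \geq d^-(v) - |V(G) \setminus S| \geq (1/2+\eps)n - (n-|S|) = |S| - (1/2-\eps)n \geq \nu n,
\]
so $R = V(G)$ and $|R| = n \geq |S| + \nu n$, using $|S| \leq (1-\tau)n \leq (1-\nu)n$.

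In the small-$S$ regime $|S| < (1/2-\eps+\nu)n$, I would double-count $e_G(S, V(G))$. The outdegree condition gives $e_G(S,V(G)) \geq (1/2+\eps) n|S|$, while splitting the sum $\sum_v |N^-(v) \cap S|$ according to whether $v \in R$ and using $|N^-(v) \cap S| \leq |S|$ for $v \in R$ together with $|N^-(v) \cap S| < \nu n$ for $v \notin R$ gives $e_G(S,V(G)) \leq |R|\cdot|S| + (n-|R|)\nu n$. Combining these and dividing by $|S| - \nu n > 0$ produces a lower bound on $|R|$; assuming for contradiction $|R| < |S| + \nu n$ and multiplying out reduces the goal to refuting the quadratic inequality
\[
|S|\bigl[(1/2+\eps)n - |S|\bigr] < \nu(1-\nu)n^2.
\]
Since the left-hand side is a concave function of $|S|$, it suffices to check failure at the two endpoints of $[\tau n,(1/2-\eps+\nu)n]$. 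At $|S| = \tau n$, using $\eps \geq \tau$ gives the left-hand side at least $\tau(1/2)n^2$, and $\tau > 2\nu > 2\nu(1-\nu)$ finishes this endpoint. At $|S| = (1/2-\eps+\nu)n$ we are in the subregime where $\tau + \eps < 1/2 + \nu$, i.e.\ $1 - 2\eps > 2\tau - 2\nu$; a direct expansion yields
\[
(1/2-\eps+\nu)(2\eps-\nu) - \nu(1-\nu) = \eps(1 - 2\eps + 3\nu) - 3\nu/2 \geq 2\eps\tau - 3\nu/2 \geq 5\nu/2,
\]
where the last step invokes the key hypothesis $\eps\tau \geq 2\nu$.

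The main obstacle is making the two cases cover $|S| \in [\tau n,(1-\tau)n]$ seamlessly and verifying the right-endpoint inequality at $|S| = (1/2-\eps+\nu)n$; the simplification is the realization that one may assume $\tau + \eps < 1/2 + \nu$ in that endpoint check (otherwise Case~2 is vacuous), at which point the algebra collapses to a one-line consequence of $\eps\tau \geq 2\nu$.
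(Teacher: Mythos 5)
Your proof is correct, and it follows the same standard double-counting strategy as the cited Kühn--Osthus argument: in the small-$|S|$ regime you bound $e(S,V(G))$ from below using the minimum outdegree and from above by splitting the inneighbour counts according to membership in $R=\text{RN}^+_\nu(S)$, then derive a contradiction from the assumption $|R|<|S|+\nu n$. The case split at $|S|\ge(1/2-\eps+\nu)n$ (where the indegree condition alone forces $R=V(G)$) is likewise the same. The only departure is purely algebraic: you phrase the final contradiction as $|S|\bigl[(1/2+\eps)n-|S|\bigr]<\nu(1-\nu)n^2$ and dispose of it via concavity and two endpoint checks, whereas the usual presentation simply divides by $|S|\ge\tau n$ and compares $2\eps$ against $2\nu+\nu/\tau$. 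Both finishes are short and invoke the same hypothesis $\eps\tau\ge 2\nu$; yours is a little longer but no less rigorous, and your observation that the endpoint $|S|=(1/2-\eps+\nu)n$ only needs checking when $\tau+\eps<1/2+\nu$ is a clean way to keep the cases from overlapping awkwardly.
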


\no In fact we can relax the degree condition in Lemma~\ref{lem:HigherThanHalfGivesRobustExpander} and allow a small number of vertices to violate the minimum degree condition.

	\begin{corollary}\label{cor:Higher-Than-Half-For-All-But-Few}
		Let $1/n<\nu,\rho\ll\tau\ll\varepsilon\ll\alpha <1$ be constants. If $G$ is an $n$-vertex digraph  such that $d^{+}(v), d^{-}(v)\geq (1/2+\eps)n$ for all but at most $\rho n$ vertices $v \in V(G)$, then $G$ is a robust $(\nu,\tau)$-outexpander. In particular, if additionally $\delta^{0}(G)\geq \alpha n$, then $G$ contains a Hamilton cycle.
	\end{corollary}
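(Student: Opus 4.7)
The plan is to reduce to a subdigraph satisfying the hypothesis of Lemma~\ref{lem:HigherThanHalfGivesRobustExpander}, then transfer robust expansion back to $G$ using Corollary~\ref{cor:AddingSmallPartIntoRobustExpander}. First, let $B:=\{v\in V(G):d^+(v)<(1/2+\varepsilon)n \text{ or } d^-(v)<(1/2+\varepsilon)n\}$, so by hypothesis $|B|\le\rho n$. Set $G':=G-B$ and $n':=|V(G')|=n-|B|\ge(1-\rho)n$. For every $v\in V(G')$ we have
\[
d^\pm_{G'}(v)\ \ge\ (1/2+\varepsilon)n-|B|\ \ge\ (1/2+\varepsilon)n-\rho n,
\]
and since $\rho\ll\varepsilon$, a short calculation gives $d^\pm_{G'}(v)\ge(1/2+\varepsilon/2)n'$.

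Next, I would apply Lemma~\ref{lem:HigherThanHalfGivesRobustExpander} to $G'$ with parameters, say, $(\nu',\tau',\varepsilon')=(2\nu,\tau,\varepsilon/2)$ — the hierarchy $\nu\ll\tau\ll\varepsilon$ ensures the required condition $\varepsilon'\ge 2\nu'/\tau'$, so $G'$ is a robust $(2\nu,\tau)$-outexpander. Since $|B|\le\rho n\le\rho\cdot n'/(1-\rho)$ and $\rho\ll\nu$, we have $|B|\le 2\nu\cdot n'/2=\nu n'$, which is exactly the hypothesis needed to invoke Corollary~\ref{cor:AddingSmallPartIntoRobustExpander} (with parameter $2\nu$ in place of $\nu$). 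Thus $G$ itself is a robust $(\nu,2\tau)$-outexpander. Redoing the argument with slightly tighter choices of intermediate constants (or simply rechoosing $\nu,\tau$ at the start so that the final output comes out as $(\nu,\tau)$) yields that $G$ is a robust $(\nu,\tau)$-outexpander, as claimed.

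For the ``in particular'' conclusion, assume $\delta^0(G)\ge\alpha n$. Then the robust expansion just established together with the lower bound $\delta^0(G)\ge\alpha n$ feeds directly into Theorem~\ref{thm:RobustExpanderImpliesHamilton} (with $\gamma=\alpha$), whose hierarchy $1/n\ll\nu\le\tau\ll\gamma$ is compatible with our assumption $1/n<\nu,\rho\ll\tau\ll\varepsilon\ll\alpha$. This produces the desired Hamilton cycle.

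The proof is essentially a chaining of the three previously stated results, so there is no substantive obstacle; the only delicate point is the parameter bookkeeping needed so that the errors introduced by deleting $B$ (in Lemma~\ref{lem:HigherThanHalfGivesRobustExpander}) and by re-adding $B$ (in Corollary~\ref{cor:AddingSmallPartIntoRobustExpander}) are absorbed within the given hierarchy $\nu,\rho\ll\tau\ll\varepsilon\ll\alpha$. This is handled by choosing slightly smaller intermediate $\nu'$ and slightly larger intermediate $\tau'$ at the start, so that the degradation from Lemma~\ref{lem:SYMMETRIC-DIFFERENCE} returns the target parameters $(\nu,\tau)$.
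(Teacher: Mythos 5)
Your proposal is correct and follows essentially the same route as the paper's proof: delete the set of exceptional vertices, apply Lemma~\ref{lem:HigherThanHalfGivesRobustExpander} to the remaining subdigraph, re-add the deleted set via Corollary~\ref{cor:AddingSmallPartIntoRobustExpander}, and then invoke Theorem~\ref{thm:RobustExpanderImpliesHamilton} for the Hamilton cycle. The paper handles the parameter bookkeeping you flag at the end by fixing fresh intermediate constants $\nu',\tau'$ with $\nu,\rho\ll\nu'\ll\tau'\ll\tau$ from the outset, which is exactly the fix you describe.
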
  
\begin{proof}
Fix $\nu'$ and $\tau'$ such that $\nu,\rho\ll \nu'\ll \tau'\ll\tau$. Let $W$ be the set of vertices $v$ in $G$ such that $\min\{d^{+}(v), d^{-}(v) \} < (1/2+\eps)n$. Then, observe that $G'=G-W$ satisfies 
\[
d_{G'}^{+}(v), d_{G'}^{-}(v)\geq (1/2+\varepsilon-\rho)n\geq (1/2+\varepsilon-\rho)|G'|
\]
for all $v\in V(G')$. By our choice of parameters, we can conclude that $G'$ is a robust $(\nu',\tau')$-outexpander by Lemma~\ref{lem:HigherThanHalfGivesRobustExpander} since $\tau'\leq\eps-\rho$ and $2\nu'/\tau'\leq \eps-\rho$. Moreover, we have $|W|=\rho n\leq \nu' n/2$. Therefore, $G$ is a robust $(\nu,\tau)$-outexpander by Corollary~\ref{cor:AddingSmallPartIntoRobustExpander}, and the result follows by Theorem~\ref{thm:RobustExpanderImpliesHamilton}.	
\end{proof}

\subsection{Sketch proof}

Note that the sketch proof we give below only makes reference to Definition~\ref{def:RobustExpander}, Theorem~\ref{thm:RobustExpanderImpliesHamilton}, and Lemma~\ref{lem:HigherThanHalfGivesRobustExpander}.
We will sketch the proof of Theorem~\ref{thm:MAIN-1-3-CASE} and then explain how these ideas are generalised and refined to prove Theorem~\ref{thm:MAIN-1-4-CASE}. \\

\no
Let $G=(V,E)$ be an $n$-vertex, $d$-regular digraph with $d \geq (1/3+\eps)n$. If $G$ is a robust $(\nu, \tau)$-outexpander (for suitable parameters $\nu$ and $\tau$), then by Theorem~\ref{thm:RobustExpanderImpliesHamilton}, we know $G$ has a Hamilton cycle. So assume $G$ is not a robust $(\nu, \tau)$-outexpander. We describe a useful vertex partition of $G$. \\

\no
{\bf Partitioning non-robust expanders} -
Since $G$ is not a robust $(\nu, \tau)$-outexpander we know by Definition~\ref{def:RobustExpander} that there exists $S \subseteq V(G)$ such that $\tau n \leq |S| \leq (1- \tau)n$ and $|\text{RN}_{\nu}^{+}(S)| \leq |S| + \nu n$. This immediately gives us a partition of $V(G)$ into four parts given by
\begin{align*}
V_{11} = S \cap \text{RN}^{+}(S), \:\:\:\:\:
V_{12} = S \setminus \text{RN}^{+}(S), \:\:\:\:\: 
V_{21} = \text{RN}^{+}(S) \setminus S, \:\:\:\:\:
V_{22} = V \setminus (S \cup \text{RN}^{+}(S)).
\end{align*}
We see that most outedges from vertices in $S$ go to $\text{RN}^{+}(S)$ by the definition of $\text{RN}^{+}(S)$. Moreover, $S$ and $\text{RN}^{+}(S)$ must be of similar size; indeed we already know $\text{RN}^{+}(S)$ is not significantly bigger than~$S$, and it cannot be significantly smaller because otherwise the degrees in $\text{RN}^{+}(S)$ would be larger than degrees in $S$ violating that $G$ is regular. Also most outedges of vertices in $V \setminus S$ go to $V \setminus \text{RN}^{+}(S)$ because if many of these edges went to $\text{RN}^{+}(S)$, the degrees in $\text{RN}^{+}(S)$ would again be too large violating that $G$ is regular. All of this is straightforward to show and captured in Lemma~\ref{lem:structure_not_expander}. The structure we obtain is depicted in Figure~\ref{fig:partition-expander}. To summarise, we have that
\begin{itemize}
    \item[(a)] $|S| \approx |\text{RN}^{+}(S)|$ so $|V_{12}| \approx |V_{21}|$, 
    \item[(b)] most edges of $G$ are from $S$ to $\text{RN}^{+}(S)$ and from $V \setminus S$ to $V \setminus \text{RN}^{+}(S)$. We call these the good edges of $G$, and
    \item[(c)] (b) implies that we must have $|S|, |V \setminus S| \gtrapprox d$ so that in particular $n/3 \lessapprox |S|, |V \setminus S| \lessapprox 2n/3 $ 
\end{itemize}


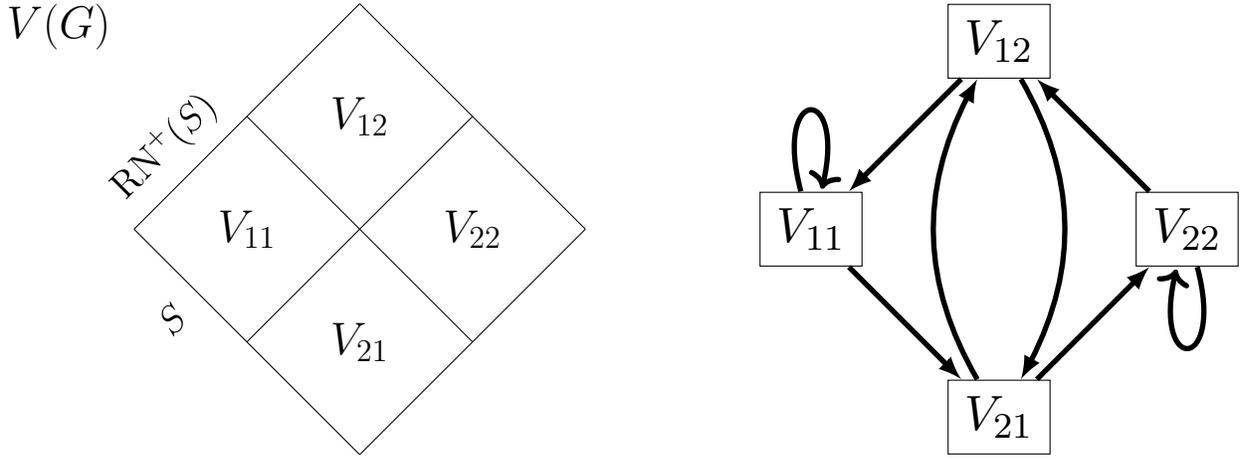
\begin{figure}[h]
\centering
\begin{tikzpicture}
\draw (-7.5,0) node {\LARGE$V_{11}$};
\draw (-4.5,0) node {\LARGE$V_{22}$};
\draw (-6,1.5) node {\LARGE$V_{12}$};
\draw (-6,-1.5) node {\LARGE$V_{21}$};
\draw   (-9,0) -- (-7.5,1.5);
\draw   (-6,3) -- (-7.5,1.5);
\draw   (-6,3) -- (-4.5,1.5);
\draw   (-3,0) -- (-4.5,1.5);
\draw   (-3,0) -- (-4.5,-1.5);
\draw   (-6,-3) -- (-4.5,-1.5);
\draw   (-6,-3) -- (-7.5,-1.5);
\draw   (-9,0) -- (-7.5,-1.5);
\draw   (-4.5,1.5) -- (-6,0);
\draw   (-4.5,-1.5) -- (-6,0);
\draw   (-7.5,1.5) -- (-6,0);
\draw   (-7.5,-1.5) -- (-6,0);

\node at (-10,2.7) {\LARGE$V(G)$};
\node[rotate=45] at (-8.6,1.1) {\Large$\text{RN}^{+}(S)$};
\node[rotate=45] at (-8.5,-1.2) {\Large$S$};

\node [style=whiterectangle,scale=1.7] (1) at (2.5, 2.5) {$V_{12}$};
\node [style=whiterectangle,scale=1.7] (2) at (0, 0) {$V_{11}$};
\node [style=whiterectangle,scale=1.7] (3) at (5, 0) {$V_{22}$};
\node [style=whiterectangle,scale=1.7] (4) at (2.5, -2.5) {$V_{21}$};

\draw [style=BLACKARROW] (1) to (2);
\draw [style=BLACKARROW] (2) to (4);
\draw [style=BLACKARROW] (4) to (3);
\draw [style=BLACKARROW] (3) to (1);
\path [style=BLACKARROW,scale=3] (2) edge [loop above] node {} (2);
\path [style=BLACKARROW,scale=3] (3) edge [loop below] node {} (3);
\path [style=BLACKARROW] (1) edge [bend left] node {} (4);
\path [style=BLACKARROW] (4) edge [bend left] node {} (1);
\end{tikzpicture}
	\caption{The 4-partition of $V(G)$ with $|V_{12}| \approx |V_{21}|$, and directions of the good edges.}
	\label{fig:partition-expander}
\end{figure}

\no
Next we describe the strategy to construct a Hamilton cycle in $G$ using this partition. \\

\no
{\bf Constructing the Hamilton cycle for balanced partitions} - We first describe how to construct the Hamilton cycle in the special case $|V_{12}| = |V_{21}|>0$. In that case, let $V_{12} = \{ x_1, \ldots, x_k \}$ and $V_{21}= \{y_1, \ldots, y_k \}$. Consider the two edge-disjoint subgraphs $G_1$ and $G_2$ of $G$ given by (see Figure~\ref{fig:graphs-G1-G2})
\begin{align*}
    G_1 &= \left( S \cup \text{RN}^{+}(S), \; E_G(S, \text{RN}^{+}(S) \right) \\
    &= (V_{11} \cup V_{12} \cup V_{21}, \; E(V_{12}, V_{11}) \cup E(V_{11}, V_{11}) \cup E(V_{11}, V_{21}) \cup E(V_{12}, V_{21}) ), 
\end{align*}
and 
\begin{align*}
    G_2 &= \left( (V \setminus S) \cup (V \setminus \text{RN}^{+}(S)), \; E_G(V \setminus S, V \setminus \text{RN}^{+}(S) \right) \\
    &= (V_{22} \cup V_{12} \cup V_{21}, \; E(V_{21}, V_{22}) \cup E(V_{22}, V_{22}) \cup E(V_{22}, V_{12}) \cup E(V_{21}, V_{12}) ). 
\end{align*}


\begin{figure}[h]
    \centering

    \begin{tikzpicture}[scale=0.7]

\node at (-1.5,2.7) {\LARGE$G$};
\node [style=whiterectangle,scale=1.25] (1) at (1.5, 2.5) {$V_{12}$};
\node [style=whiterectangle,scale=1.25] (2) at (-1, 0) {$V_{11}$};
\node [style=whiterectangle,scale=1.25] (3) at (4, 0) {$V_{22}$};
\node [style=whiterectangle,scale=1.25] (4) at (1.5, -2.5) {$V_{21}$};

\draw [style=BLACKARROW] (1) to (2);
\draw [style=BLACKARROW] (2) to (4);
\draw [style=BLACKARROW] (4) to (3);
\draw [style=BLACKARROW] (3) to (1);
\path [style=BLACKARROW,scale=3] (2) edge [loop above] node {} (2);
\path [style=BLACKARROW,scale=3] (3) edge [loop below] node {} (3);
\path [style=BLACKARROW] (1) edge [bend left] node {} (4);
\path [style=BLACKARROW] (4) edge [bend left] node {} (1);

\node at (8,2.7) {\LARGE$G_1$};
\node [style=whiterectangle,scale=1.25] (5) at (11, 2.5) {$V_{12}$};
\node [style=whiterectangle,scale=1.25] (6) at (8.5, 0) {$V_{11}$};
\node [style=whiterectangle,scale=1.25] (7) at (11, -2.5) {$V_{21}$};

\draw [style=BLACKARROW] (5) to (6);
\draw [style=BLACKARROW] (6) to (7);
\path [style=BLACKARROW,scale=3] (6) edge [loop above] node {} (6);
\path [style=BLACKARROW] (5) edge [bend left] node {} (7);

\node at (14.5,2.7) {\LARGE$G_2$};
\node [style=whiterectangle,scale=1.25] (8) at (16.5, 2.5) {$V_{12}$};
\node [style=whiterectangle,scale=1.25] (9) at (19, 0) {$V_{22}$};
\node [style=whiterectangle,scale=1.25] (10) at (16.5, -2.5) {$V_{21}$};

\draw [style=BLACKARROW] (10) to (9);
\draw [style=BLACKARROW] (9) to (8);
\path [style=BLACKARROW,scale=3] (9) edge [loop below] node {} (9);
\path [style=BLACKARROW] (10) edge [bend left] node {} (8);
    
    \end{tikzpicture}
    \caption{The edge-disjoint subgraphs $G_1$ and $G_2$ of $G$.}
	\label{fig:graphs-G1-G2}
\end{figure}
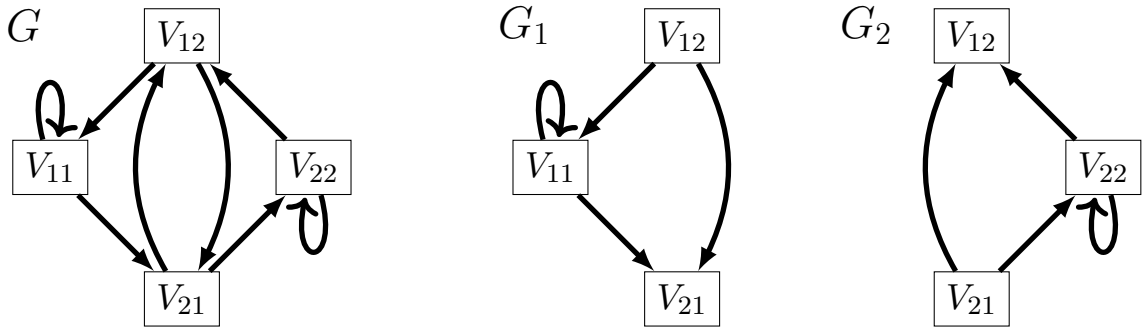

\no
Suppose we can find  
\begin{itemize}
    \item[(i)] vertex-disjoint paths $Q^1_1, \ldots, Q^1_k$ in $G_1$ that together span $V(G_1)$ and where $Q^1_i$ is from $x_i$ to $y_{\sigma(i)}$ for some permutation $\sigma$ on $[k]$,
    \item[(ii)] vertex-disjoint paths $Q^2_1, \ldots, Q^2_k$ in $G_2$ that together span $V(G_2)$ and where $Q^2_i$ is from $y_i$ to $x_{\pi(i)}$ for some permutation $\pi$ on $[k]$,
    \item[(iii)] and where the permutation $\pi \sigma$ is a cyclic permutation.
\end{itemize}
Then it is easy to see that the union of these paths forms a Hamilton cycle. We find these paths as follows. \\

\no
Consider $G_1$ first. We construct the graph $J_1$ from $G_1$ by identifying $x_i$ with $y_i$ for every $i \in [k]$ and keeping all edges (except any self loops). The vertex which replaces $x_i$ and $y_i$ is called $i$. From the structure of $G_1$, it is not hard to see that most vertices in $J_1$ have degree roughly $d = (1/3 + \eps)n$, while $|J_1| = |S| \lessapprox 2n/3$ by (c).
So most vertices in $J_1$ have in- and outdegree at least $(1/2+\eps/2)|J_1|$, which implies $J_1$ is a robust expander by Lemma~\ref{lem:HigherThanHalfGivesRobustExpander}.\footnote{Any enumeration of the vertices in $V_{12}$ and $V_{21}$ would lead to $J_1$ being a robust expander.} Therefore $J_1$ has a Hamilton cycle $H_1$ by Theorem~\ref{thm:RobustExpanderImpliesHamilton}. \\

\no 
Let $\sigma$ be the permutation on $[k]$ where $\sigma(i)$ is the vertex in $[k]$ after $i$ that is visited by $H_1$. Therefore $H_1$ is the union of paths $R_1, \ldots, R_k$ where $R_i$ is from $i$ to $\sigma(i)$, which corresponds in $G_1$ to the path $Q_i^1$ from $x_i$ to $y_{\sigma(i)}$; these paths can easily be seen to satisfy (i) (see Figure~\ref{fig:hamilton-to-paths}). Next, we obtain $J_2$ from $G_2$ by identifying the vertex $x_i$ with $y_{\sigma(i)}$, and labelling the resulting vertex $i$, for every $i \in [k]$ similarly as for $J_1$. Again, we find that $J_2$ is a robust expander and so has a Hamilton cycle $H_2$. Let $\pi$ be the permutation on $[k]$ such that $\pi(i)$ is the next vertex in $[k]$ after $i$ visited by $H_2$. Using the same argument as before, we obtain paths $Q^2_1, \ldots, Q^2_k$ satisfying (ii). By our choice of identification in $J_2$, and since $H_2$ is a Hamilton cycle, it is easy to see that $\pi$ and $\sigma$ satisfy~(iii). \\

\begin{figure}[h]
	\centering
	\begin{tikzpicture}[scale=0.75]
 \draw (-0.5,8.7) node [anchor=north west][inner sep=0.75pt]    {\Large$(A)$};
 \draw (-6.3,3.2) node [anchor=north west][inner sep=0.75pt]    {\Large$V_{11}$};
  \draw (6.1,6.8) node [anchor=north west][inner sep=0.75pt]    {\Large$V_{12}$};
   \draw (6.1,-0.1) node [anchor=north west][inner sep=0.75pt]    {\Large$V_{21}$};
 \draw   (-5.2,0) -- (-5.2,5.8) -- (-2,5.8) -- (-2,0) -- cycle ;
  \draw   (0,7.5) -- (0,5.5) -- (6,5.5) -- (6,7.5) -- cycle ;
\draw   (0,0.5) -- (0,-1.5) -- (6,-1.5) -- (6,0.5) -- cycle ;
 \begin{pgfonlayer}{nodelayer}
	\node [style=whitecircle,scale=0.75] (1) at (-3, 5.2) {$a_1$};
	\node [style=whitecircle,scale=0.75] (2) at (-4.5,3.5) {$a_2$};
	\node [style=whitecircle,scale=0.75] (3) at (-3, 2.5) {$a_3$};
	\node [style=whitecircle,scale=0.75] (4) at (-4,0.5) {$a_4$};
	\node [style=whitecircle,scale=0.75] (5) at (1, 6.5) {$x_1$};
	\node [style=whitecircle,scale=0.75] (6) at (3, 6.5) {$x_2$};
	\node [style=whitecircle,scale=0.75] (7) at (5, 6.5) {$x_3$};
	\node [style=whitecircle,scale=0.75] (8) at (1, -0.5) {$y_1$};
        \node [style=whitecircle,scale=0.75] (9) at (3, -0.5) {$y_2$};
        \node [style=whitecircle,scale=0.75] (10) at (5, -0.5) {$y_3$};
	\end{pgfonlayer}

 \begin{pgfonlayer}{edgelayer}
	\draw [style=REDARROW] (5) to (2);
 	\draw [style=GRAYARROW] (5) to (10);
        \draw[style=BLUEARROW] (6) to (1);
        \draw[style=GREENARROW] (7) to (4);
        \draw[style=GRAYARROW] (6) to (8);	
        \draw [style=GRAYARROW] (7) to (9);
	\draw [style=BLUEARROW] (1) to (10);
	\draw [style=GRAYARROW] (1) to (2);
	\draw [style=REDARROW] (2) to (3);
		\draw [style=REDARROW] (3) to (9);
 	\draw [style=GREENARROW] (4) to (8);
 	\draw [style=GRAYARROW] (3) to (4);
		\end{pgfonlayer}

	\end{tikzpicture}

\begin{tikzpicture}[scale=0.75]
\def \n {7}
\def \radius {4cm}
\def \margin {7} 
\draw (-6,0.5) node [anchor=north west][inner sep=0.75pt]  {\Large$(B)$};
\node[draw, circle,scale=0.75] (11) at ({360/7 * 0}:\radius) {$1$};
\node[draw, circle,scale=0.75] (12) at ({360/7 * 1}:\radius) {$a_2$};
\node[draw, circle,scale=0.75] (13) at ({360/7 * 2}:\radius) {$a_3$};
\node[draw, circle,scale=0.75] (14) at ({360/7 * 3}:\radius) {$2$};
\node[draw, circle,scale=0.75] (15) at ({360/7 * 4}:\radius) {$a_1$};
\node[draw, circle,scale=0.75] (16) at ({360/7 * 5}:\radius) {$3$};
\node[draw, circle,scale=0.75] (17) at ({360/7 * 6}:\radius) {$a_4$};

    \draw [style=GRAYARROW] (15) to (12);
    \draw [style=GRAYARROW] (13) to (17);
    \draw [style=GRAYARROW] (11) to (16);
    \draw [style=GRAYARROW] (14) to (11);
    \draw [style=GRAYARROW] (16) to (14);
    
   \foreach \s in {1,2,3}
{
  \draw[style=REDARROW] ({360/\n * (\s - 1)+\margin}:4) 
    arc ({360/\n * (\s - 1)+\margin}:{360/\n * (\s)-\margin}:4);
}

   \foreach \s in {4,5}
{
  \draw[style=BLUEARROW] ({360/\n * (\s - 1)+\margin}:4) 
    arc ({360/\n * (\s - 1)+\margin}:{360/\n * (\s)-\margin}:4);
}
    
   \foreach \s in {6,7}
{
  \draw[style=GREENARROW] ({360/\n * (\s - 1)+\margin}:4) 
    arc ({360/\n * (\s - 1)+\margin}:{360/\n * (\s)-\margin}:4);
}


\draw (15,0.5) node [anchor=north west][inner sep=0.75pt]  {\Large$(C)$};

\begin{pgfonlayer}{nodelayer}
    \node [style=whitecircle,scale=0.75] (35) at (7.5, 3) {$x_1$};
    \node [style=whitecircle,scale=0.75] (32) at (10,3) {$a_2$};
    \node [style=whitecircle,scale=0.75] (33) at (12.5, 3) {$a_3$};
    \node [style=whitecircle,scale=0.75] (39) at (15, 3) {$y_2$};

    \node [style=whitecircle,scale=0.75] (36) at (7.5, 0) {$x_2$};
    \node [style=whitecircle,scale=0.75] (31) at (10, 0) {$a_1$};
    \node [style=whitecircle,scale=0.75] (40) at (12.5, 0) {$y_3$};

    \node [style=whitecircle,scale=0.75] (37) at (7.5, -3) {$x_3$};
    \node [style=whitecircle,scale=0.75] (34) at (10,-3) {$a_4$}; 
    \node [style=whitecircle,scale=0.75] (38) at (12.5, -3) {$y_1$};
\end{pgfonlayer}

\begin{pgfonlayer}{edgelayer}
	\draw [style=REDARROW] (35) to (32);
        \draw [style=REDARROW] (32) to (33);
		\draw [style=REDARROW] (33) to (39);
        
        \draw[style=BLUEARROW] (36) to (31);
        \draw [style=BLUEARROW] (31) to (40);
        
        \draw[style=GREENARROW] (37) to (34);
		\draw [style=GREENARROW] (34) to (38);
		
\end{pgfonlayer}

\end{tikzpicture}
\caption{An example illustration of (A) $G_1$, (B) the corresponding graph $J_1$ with a Hamilton cycle $H_1$, and (C) the vertex-disjoint paths $Q_1^1, \ldots, Q_k^1$ spanning $G_1$ (with $k=3$ in this case) corresponding to $H_1$. In this case  $\sigma=(231)$, i.e. the cyclic permutation that sends 1 to 2, 2 to 3, and 3 to 1.}
\label{fig:hamilton-to-paths}
\end{figure}
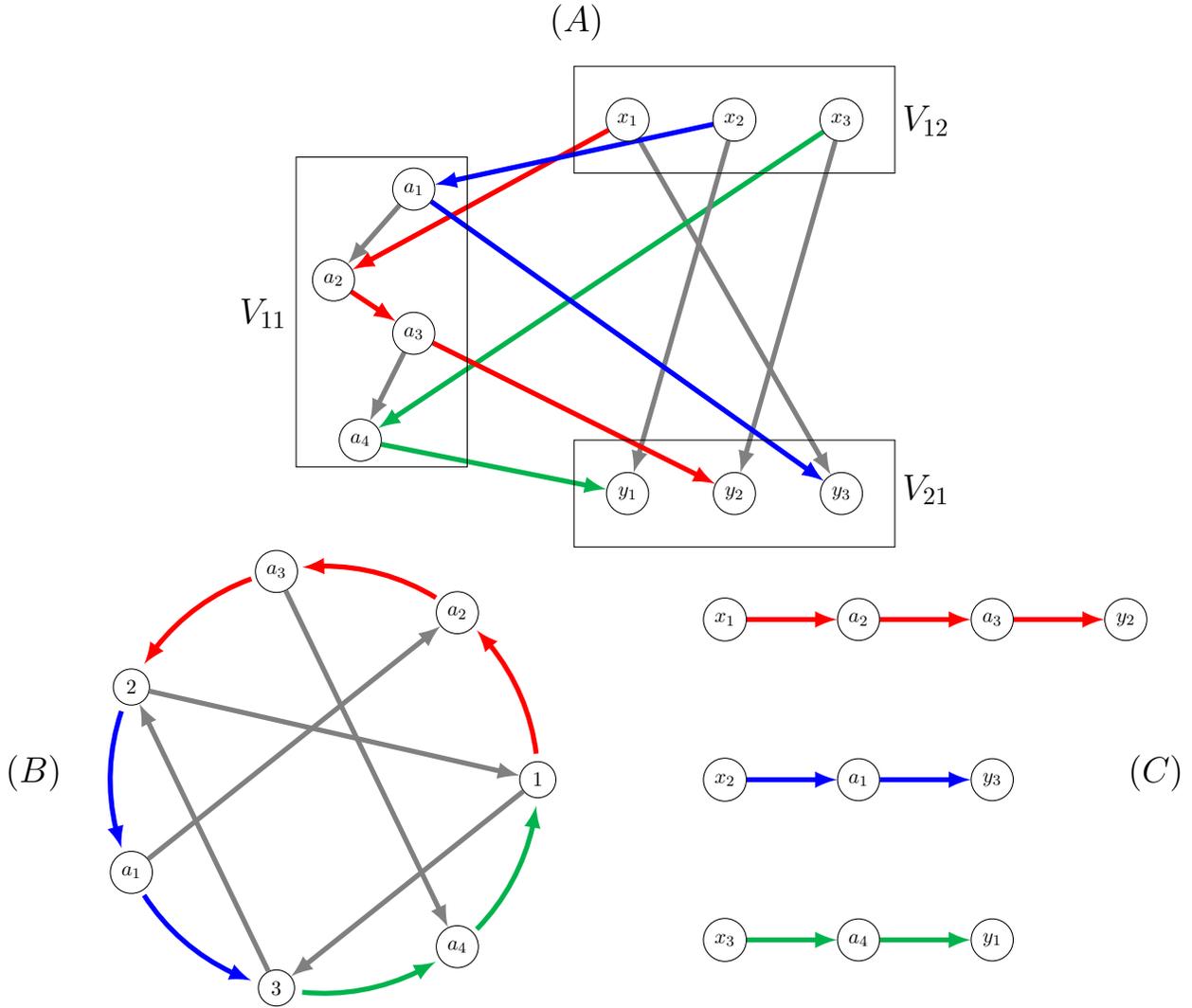

\no
{\bf Constructing the Hamilton cycle for unbalanced partitions} - We have seen how to find the Hamilton cycle when $|V_{12}| = |V_{21}|$. If instead we only have (by (a)) that $|V_{12}| \approx |V_{21}|$, then we will find vertex-disjoint paths $S_1, \ldots, S_{\ell}$ that use only bad edges (and only a relatively small number of bad edges) such that ``contracting'' these paths results in a slightly modified graph $G'$ with a slightly modified vertex partition $V'_{11}, V'_{12}, V'_{21}, V'_{22}$, which has essentially the same properties as before but also that $|V_{12}'| = |V_{21}'|$. 
Here $G'$ is not regular, but almost regular; this however is enough for us. 
So we can find a Hamilton cycle in $G'$ using the previous argument, and ``uncontracting'' the paths $S_1, \ldots, S_{\ell}$ gives a Hamilton cycle in~$G$.\footnote{For Theorem~\ref{thm:MAIN-1-3-CASE}, these paths are constructed directly in the proof of the theorem in Section~\ref{ch:mainproofs}, but in the more complicated case of Theorem~\ref{thm:MAIN-1-4-CASE}, they are constructed in Lemma~\ref{lem:9-partition-path-selection}.} \\

\no 
{\bf The case of regular oriented graphs} - 
For Theorem~\ref{thm:MAIN-1-4-CASE}, i.e. when $G$ is an $n$-vertex regular oriented graph with degree $d \geq (1/4 + \eps)n$, we start by applying the same argument as before. Recall that we construct digraphs $J_1$ and $J_2$ and wish to find Hamilton cycles in these digraphs. However, whereas before, we could guarantee that both $J_1$ and $J_2$ would be robust expanders, this time we find that (at most) one of them, say $J_2$ might not be. This is because $G$ and $J_i$ have lower degree, and so we cannot necessarily apply Lemma~\ref{lem:HigherThanHalfGivesRobustExpander}. It is not too hard to see that the $J_i$ are almost regular and so we can iterate our partition argument on $J_2$. In particular we can partition $V(J_2)$ into four parts $Z_{11}, Z_{12}, Z_{21}, Z_{22}$ that satisfy slightly modified forms of (a) and (b). Again if $|Z_{12}| = |Z_{21}|$, then we can create digraphs $K_1$ and $K_2$ such that Hamilton cycles in $K_1$ and $K_2$ lift to a Hamilton cycle in $J_2$ (just as Hamilton cycles in $J_1$ and $J_2$ lift to a Hamilton cycle in $G$).
This time the increase in density is enough to guarantee that both $K_1$ and $K_2$ are robust expanders, which gives the desired Hamilton cycle by Theorem~\ref{thm:RobustExpanderImpliesHamilton}. If $|Z_{12}| \not= |Z_{21}|$ then, as before, we need to construct paths whose contraction results in a modified graph with a modified partition that is balanced. In fact, we need to be able to find and contract paths in such a way that we simultaneously have $|V'_{12}| = |V'_{21}|$ and $|Z'_{12}| = |Z'_{21}|$. For this purpose, and generally for a cleaner and more transparent argument, rather than working with two iterations of the $4$-partition described earlier, we work equivalently with a $9$-partition of $V(G)$. The required paths are constructed in Lemma~\ref{lem:9-partition-path-selection}.

	\section{Partitions of regular digraphs and oriented graphs}
	\label{ch:partition}

We have seen that (essentially) any dense digraph that is a robust expander is Hamiltonian. If the digraph is not a robust expander, then we will see (Lemma~\ref{lem:structure_not_expander}) that the witness sets to this non-expansion naturally forms a partition of the vertices into $4$ parts. Throughout the paper we will be working with such partitions and their iterations. In this section, we introduce the language of partitions and establish some of their basic properties.

\begin{definition}\label{def:good-edges}
For a given digraph $G$ and $k\in \mathbb{N}$, a partition $\mathcal{P}_k=\{V_{ij}:i,j\in[k] \}$ of $V(G)$ is called a $k^2$-partition of $V(G)$ (we allow the sets $V_{ij}$ to be empty). The set of \emph{good edges with respect to $\mathcal{P}_k$} is defined as $$\mathcal{G}_k(\mathcal{P}_k,G):=\ds\bigcup_{i}E(V_{i*},V_{*i}),$$ where $V_{i*}:=\bigcup_{j}V_{ij}$ and $V_{*j}:=\bigcup_{i}V_{ij}$.
The set of \emph{bad edges with respect to $\mathcal{P}_k$} is defined as $$\mathcal{B}_k(\mathcal{P}_k,G):=E(G)-\mathcal{G}_k(\mathcal{P}_k,G)=\ds\bigcup_{i\neq j}E(V_{i*},V_{*j}).$$
We write $G_{ij}:=G[V_{i*}, V_{*j}]$. 
\end{definition}

\no 
Note that while we define $k^2$-partitions and prove properties for general $k$, in fact we only require the cases $k=2,3$.
For regular digraphs, we have a useful equality relating the sizes of different parts in a $k^2$-partition and the number of bad edges.

\begin{proposition}\label{prop:partition-regular-graphs}
Let $G$ be a $d$-regular digraph, $k\in \mathbb{N}$, and $\mathcal{P}_k=\{V_{ij}:i,j\in [k]\}$ be a $k^2$-partition of $V(G)$. Then, for all $i \in [k]$, we have $$d(|V_{i*}|-|V_{*i}|)= \ds\sum_{j\neq i} \left( e(G_{ij})-e(G_{ji})\right).$$
\end{proposition}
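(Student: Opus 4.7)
The plan is a short double-counting argument. The identity is a bookkeeping fact about a $d$-regular digraph: the sum of out-degrees over any vertex set $X$ is exactly $d|X|$, and likewise the sum of in-degrees is $d|X|$. I would apply this twice, once to the ``row'' $V_{i*}$ using out-degrees and once to the ``column'' $V_{*i}$ using in-degrees, and then subtract.

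First, since the sets $\{V_{*j}\}_{j\in[k]}$ partition $V(G)$, every edge leaving $V_{i*}$ (including edges inside $V_{i*}$) has its head in exactly one $V_{*j}$. Summing out-degrees over $V_{i*}$ therefore gives
\[
d|V_{i*}| \;=\; \sum_{v \in V_{i*}} d^+(v) \;=\; \sum_{j \in [k]} e(V_{i*}, V_{*j}) \;=\; \sum_{j \in [k]} e(G_{ij}).
\]
By the same reasoning, using that $\{V_{j*}\}_{j\in[k]}$ partitions $V(G)$ and summing in-degrees over $V_{*i}$,
\[
d|V_{*i}| \;=\; \sum_{v \in V_{*i}} d^-(v) \;=\; \sum_{j \in [k]} e(V_{j*}, V_{*i}) \;=\; \sum_{j \in [k]} e(G_{ji}).
\]

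Subtracting the second identity from the first gives
\[
d(|V_{i*}| - |V_{*i}|) \;=\; \sum_{j \in [k]} \bigl(e(G_{ij}) - e(G_{ji})\bigr).
\]
The $j = i$ term of the sum is $e(G_{ii}) - e(G_{ii}) = 0$, so it drops out and we obtain the claimed equality
\[
d(|V_{i*}| - |V_{*i}|) \;=\; \sum_{j \neq i} \bigl(e(G_{ij}) - e(G_{ji})\bigr).
\]

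There is no real obstacle here: the only thing to be careful about is that $G_{ij} = G[V_{i*}, V_{*j}]$ is defined as a bipartite subgraph on the row--column pair, so edges inside $V_{i*}$ (for the out-degree count) or inside $V_{*i}$ (for the in-degree count) are correctly absorbed into the diagonal terms $e(G_{ii})$, which then cancel. No regularity beyond the two global identities $\sum d^+ = \sum d^- = d|X|$ is needed, so the proof is essentially a two-line calculation.
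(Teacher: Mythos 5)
Your proof is correct and is essentially the same double-counting argument as the paper's: sum out-degrees over $V_{i*}$, sum in-degrees over $V_{*i}$, and subtract. The only cosmetic difference is that the paper writes out the diagonal term $e(V_{i*},V_{*i})$ separately and cancels it, while you keep $j=i$ inside the sum and observe it drops out.
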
 	

\begin{proof}
By considering outneighbours of the vertices in $V_{i*}$, we can write $$d|V_{i*}|=e(V_{i*},V_{*i})+\ds\sum_{j\neq i}e(V_{i*},V_{*j}).$$
Similarly, by considering the inneighbours of the vertices in $V_{*i}$, we have $$d|V_{*i}|=e(V_{i*},V_{*i})+\ds\sum_{j\neq i}e(V_{j*},V_{*i}).$$
By subtracting the second equality from the first one, the result follows.
\end{proof}
	
\no If the number of bad edges is small compared to $E(G)$, then Proposition~\ref{prop:partition-regular-graphs} implies that $V_{i*}$ and $V_{*i}$ are similar in size.

\begin{corollary}\label{cor:Y-and-Z-equal-size}
	Let $k \in \mathbb{N}$ and $\gamma$ be a positive constant. 
	Let $G$ be a $d$-regular digraph on $n$ vertices, and $\mathcal{P}_k=\{V_{ij}:i,j\in [k]\}$ be a $k^2$-partition of $V(G)$. If $|\mathcal{B}_k(\mathcal{P},G)|\leq\gamma n^2$, then we have $\big| |V_{i*}|-|V_{*i}| \big|\leq \gamma n^2/d$ for all $i \in [k]$.
\end{corollary}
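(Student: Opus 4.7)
The plan is to apply Proposition~\ref{prop:partition-regular-graphs} directly and then bound the right-hand side by the number of bad edges. First I would fix $i \in [k]$ and invoke Proposition~\ref{prop:partition-regular-graphs} to write
\[
d\bigl(|V_{i*}|-|V_{*i}|\bigr) \;=\; \sum_{j\neq i}\bigl(e(G_{ij})-e(G_{ji})\bigr).
\]
Taking absolute values and applying the triangle inequality gives
\[
d\,\bigl||V_{i*}|-|V_{*i}|\bigr| \;\leq\; \sum_{j\neq i}\bigl(e(G_{ij})+e(G_{ji})\bigr).
\]

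Next I would argue that the right-hand side is at most $|\mathcal{B}_k(\mathcal{P}_k,G)|$. Each directed edge of $G$ lies in exactly one $G_{ab}$, namely the one determined by $u \in V_{a*}$ and $v \in V_{*b}$ for $uv \in E(G)$. An edge therefore contributes to the sum $\sum_{j\neq i}e(G_{ij})$ exactly when $a=i$ and $b\neq i$, and to $\sum_{j\neq i}e(G_{ji})$ exactly when $a\neq i$ and $b=i$; these two conditions are mutually exclusive, so each edge contributes at most once. Moreover, every edge that does contribute has $a\neq b$ and thus is a bad edge with respect to $\mathcal{P}_k$. Consequently,
\[
\sum_{j\neq i}\bigl(e(G_{ij})+e(G_{ji})\bigr) \;\leq\; |\mathcal{B}_k(\mathcal{P}_k,G)| \;\leq\; \gamma n^2.
\]

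Combining the two displays yields $d\,\bigl||V_{i*}|-|V_{*i}|\bigr| \leq \gamma n^2$, and dividing by $d$ gives the claimed bound. There is no real obstacle here; the only point worth flagging is the no-double-counting check above, which ensures we obtain the stated factor of one (rather than two) in front of $\gamma n^2/d$.
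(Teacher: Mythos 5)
Your proof is correct and follows essentially the same route as the paper: apply Proposition~\ref{prop:partition-regular-graphs}, use the triangle inequality, and bound $\sum_{j\neq i}\bigl(e(G_{ij})+e(G_{ji})\bigr)$ by $|\mathcal{B}_k(\mathcal{P}_k,G)|$. Your explicit no-double-counting check is a bit more detailed than the paper's one-line observation, but the argument is the same.
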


\begin{proof}
Fix $i \in [k]$. We have 
$$\Big|\ds\sum_{j\neq i}\left(e(V_{i*},V_{*j})-e(V_{j*},V_{*i})\right)\Big|\leq \ds\sum_{j\neq i}\left(e(V_{i*},V_{*j})+e(V_{j*},V_{*i})\right)\leq|\mathcal{B}_k(\mathcal{P},G)|\leq \gamma n^2.$$ Hence, by Proposition~\ref{prop:partition-regular-graphs}, we know $d\big||V_{i*}|-|V_{*i}|\big| \leq \gamma n^2$, so the result follows.
\end{proof}

\no We will be especially interested in partitions with a small number of bad edges and where certain parts are not too small.

\begin{definition}\label{def:name-good-partition}
For a given digraph $G$ on $n$ vertices and positive constants $\gamma$, $\tau$, and $k\in \mathbb{N}$, we say a $k^2$-partition $\mathcal{P}_k=\{V_{ij}:i,j\in [k]\}$ of $V(G)$ is a \emph{$(k^2,\tau,\gamma)$-partition} if the following hold:
\begin{center}
	$|\mathcal{B}_k(\mathcal{P}_k,G)|\leq \gamma n^2$ and $|V_{i*}|, |V_{*j}|\geq\tau n$ for all $i,j \in [k]$.
\end{center}
\end{definition}

\begin{remark}\label{rem:Y-and-Z-equal-size}
In general, the constants $\gamma$ and $\tau$ are taken to satisfy $1/n\ll \gamma\ll \tau\ll 1$. When working with regular graphs, we sometimes implicitly take the conclusion of Corollary~$\ref{cor:Y-and-Z-equal-size}$ as a property of a $(k^2,\tau,\gamma)$-partition.
\end{remark}

\no Next, we show that every almost regular digraph which is dense and not a robust $(\nu,\tau)$-outexpander admits a $(4,\tau/2,4\nu)$-partition.

\begin{lemma}\label{lem:structure_not_expander}
	Let $1/n\ll\nu\ll\tau\ll\alpha\ll1$, and $G$ be a digraph on $n$ vertices such that $e(G) \ge (\alpha -\nu) n^2 $ and $\Delta^0(G) \le \alpha n$.
	If $G$ is not a robust $(\nu,\tau)$-outexpander, then $G$ admits a $(4,\tau,4\nu)$-partition. 
\end{lemma}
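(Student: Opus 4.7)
Plan: The plan is to read off the required $4$-partition directly from a witness to the failure of robust expansion, and then control the two types of bad edges via a short double count that uses both halves of the degree hypothesis. Since $G$ is not a robust $(\nu,\tau)$-outexpander, Definition~\ref{def:RobustExpander} yields a set $S\subseteq V(G)$ with $\tau n\le |S|\le (1-\tau)n$ and $|T|\le |S|+\nu n$, where $T:=\text{RN}^{+}_{\nu}(S)$. I would then define
\[
V_{11}:=S\cap T,\quad V_{12}:=S\setminus T,\quad V_{21}:=T\setminus S,\quad V_{22}:=V(G)\setminus(S\cup T),
\]
so that $V_{1*}=S$, $V_{2*}=V(G)\setminus S$, $V_{*1}=T$, $V_{*2}=V(G)\setminus T$. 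With this choice the set of bad edges is exactly $E(S,V\setminus T)\cup E(V\setminus S,T)$.

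Next I would bound the two contributions separately. The first is immediate from the definition of $T$: each vertex outside $T$ has fewer than $\nu n$ in-neighbors in $S$, so $|E(S,V\setminus T)|\le \nu n\cdot |V\setminus T|\le \nu n^2$. For the second I would use a ``near-regularity'' observation coming from the hypotheses: since $e(G)\ge (\alpha-\nu)n^2$ and every vertex has out-degree at most $\alpha n$, for any $U\subseteq V(G)$ one has $\sum_{v\in U}d^{+}_G(v)\ge \alpha n|U|-\nu n^2$. Applied with $U=S$ and combined with the previous bound, this gives $e(S,T)\ge \alpha n|S|-2\nu n^2$. Dually, the sum of in-degrees into $T$ is at most $\alpha n|T|\le \alpha n(|S|+\nu n)$, so
\[
|E(V\setminus S,T)|\le \alpha n(|S|+\nu n)-\bigl(\alpha n|S|-2\nu n^2\bigr)\le 3\nu n^2.
\]
Summing, $|\mathcal{B}_2(\mathcal{P}_2,G)|\le 4\nu n^2$, as required.

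Finally I would verify the size conditions. The row sizes $|V_{1*}|=|S|\ge \tau n$ and $|V_{2*}|=n-|S|\ge \tau n$ are immediate from the choice of $S$. For the columns, the same double count also produces $|T|\ge |S|-2\nu n/\alpha$ (from $e(S,T)\le \alpha n|T|$) and $|V\setminus T|\ge n-|S|-\nu n$, and the hierarchy $\nu\ll\tau\ll\alpha$ makes the additive corrections $2\nu/\alpha$ and $\nu$ negligible compared to $\tau$, so both columns have size at least $\tau n$. The main obstacle, though modest, is the double count in the previous paragraph: the lower bound on $e(S,T)$ coming from the density hypothesis and the upper bound on in-degrees into $T$ coming from $\Delta^{0}(G)\le \alpha n$ must almost cancel, so that only $O(\nu n^2)$ slack survives. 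This is the only place in the argument where both halves of the degree hypothesis are genuinely coupled, and it is what produces the explicit $4\nu n^2$ bound on the bad edges.
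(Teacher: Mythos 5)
Your partition choice and bad-edge count are exactly right and essentially match the paper's argument: you split $|\mathcal{B}_2|$ into $e(S,V\setminus T)\le\nu n^2$ (from the definition of $T=\mathrm{RN}^+_\nu(S)$) and $e(V\setminus S,T)\le 3\nu n^2$ via the same lower bound $e(S,T)\ge\alpha n|S|-2\nu n^2$ and upper bound $e(V,T)<\alpha n(|S|+\nu n)$ that the paper uses implicitly. That part is sound.

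The column-size verification, however, has a genuine gap. You conclude $|T|\ge|S|-2\nu n/\alpha$ from the chain $\alpha n|T|\ge e(S,T)\ge\alpha n|S|-2\nu n^2$, and similarly $|V\setminus T|\ge n-|S|-\nu n$; since all you know about $|S|$ is $|S|\ge\tau n$, these give only $|V_{*1}|\ge\tau n-2\nu n/\alpha$ and $|V_{*2}|\ge\tau n-\nu n$, which are \emph{strictly less} than $\tau n$ whenever the additive corrections are nonzero. Saying the corrections are ``negligible compared to $\tau$'' does not close this: a $(4,\tau,4\nu)$-partition requires $|V_{*j}|\ge\tau n$ exactly, not $\tau n-o(\tau n)$. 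The paper avoids this by proving a much stronger lower bound of order $\alpha n/2\gg\tau n$: using the density hypothesis it shows all but $\sqrt\nu n$ vertices of $V_{i*}$ have out-degree $\ge(\alpha-\sqrt\nu)n$, and combines $e(V_{i*},V(G))\le|V_{i*}|\,|V_{*i}|+4\nu n^2$ with $e(V_{i*},V(G))\ge(\alpha-\sqrt\nu)n\,|V_{i*}|/2$ to get $|V_{*i}|\ge(\alpha-\sqrt\nu)n/2-4\nu n/\tau\ge\tau n$. You could repair your version cheaply: replace the upper bound $e(S,T)\le\alpha n|T|$ by the trivial $e(S,T)\le|S|\,|T|$, which combined with $e(S,T)\ge\alpha n|S|-2\nu n^2$ yields $|T|\ge\alpha n-2\nu n^2/|S|\ge\alpha n-2\nu n/\tau\gg\tau n$, and run the analogous count for $e(V\setminus S,V\setminus T)$ to bound $|V\setminus T|$. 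As written, though, the proposal does not establish the size conditions of the partition.
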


\begin{proof}
	Assume $G$ is not a robust $(\nu,\tau)$-outexpander.
	Then we can find a subset $S\subseteq V(G)$ such that $\tau n\leq |S|\leq (1-\tau)n$ and $|\text{RN}_{\nu}^{+}(S)|<|S|+\nu n$. 
	Let us define $V_{11}=S\cap \text{RN}_{\nu}^{+}(S)$, $V_{12}=S-\text{RN}_{\nu}^{+}(S)$, $V_{21}=\text{RN}_{\nu}^{+}(S)-S$, and $V_{22}=V(G)-(S\cup \text{RN}_{\nu}^{+}(S))$. Therefore $V_{1*}=S$ and $V_{*1}=\text{RN}_{\nu}^{+}(S)$.
	Note that $\mathcal{P}_2=\{V_{ij}:i,j \in [2]\}$ is a $4$-partition of $V(G)$.
	Moreover, since $\tau n\leq |S|\leq (1-\tau)n$, we have $|V_{1*}|,|V_{2*}|\geq \tau n$.
	\\
	
	\no
	We first show that $|\mathcal{B}_2(\mathcal{P}_2,G)| \leq 4\nu n^2$.
	By the definition of $\text{RN}_{\nu}^{+}(S)$, we know that every vertex in $V_{*2}$ has fewer than $\nu n$ inneighbours from $V_{1*}$. Thus, we have
	\begin{align}
	    e(V_{1*},V_{*2}) \le \nu n^2 \label{eq:first}
	\end{align}
	and 
	\begin{align}
	    e(V_{1*},V_{*1}) & = e(V_{1*},V(G)) - e(V_{1*},V_{*2}) \ge e(V_{1*},V(G)) - \nu n^2 
	    = e(G) - e(V_{2*}, V(G)) - \nu n^2  \nonumber \\
	    &\ge (\alpha -\nu) n^2 - \alpha n |V_{2*}| - \nu n^2 = \alpha n |V_{1*}| - 2 \nu n^2 \label{eq:third}
	    .
	\end{align}
	Since $|V_{*1}| = |\text{RN}_{\nu}^{+}(S)|<|S|+\nu n = |V_{1*}|+\nu n$, we have
	\begin{align*}
		e(V(G), V_{*1}) \le  \alpha n |V_{*1}| <  (|V_{1*}|+\nu n) \alpha n \le \alpha n |V_{1*}| + \nu n^2.
	\end{align*} 
	Thus, together with~\eqref{eq:third}, we have
	\begin{align*}
	    e(V_{2*}, V_{*1}) = e(V(G), V_{*1}) - e(V_{1*}, V_{*1}) \le 3 \nu n^2.
	\end{align*}
	Therefore \eqref{eq:first} implies that $ |\mathcal{B}_2(\mathcal{P}_2,G)| = e(V_{1*},V_{*2}) + e(V_{2*},V_{*1}) \leq 4\nu n^2$.\\
	
\noindent We now bound $|V_{*1}|$ and $|V_{*2}|$ from below. 
Let $T$ be the set of vertices with outdegree at most $(\alpha - \sqrt{\nu})n$.
	Then as $\Delta^0(G) \le \alpha n$
	\begin{align*}
	  (\alpha - \nu )n^2 \le e(G) 
	  \le (\alpha - \sqrt{\nu})n|T| + \alpha n (n-|T|)
	  = \alpha n^2 - \sqrt{\nu}n |T|,
	\end{align*}
	which implies that $|T| \le \sqrt{\nu}n$. 
	For $\{i,j\} = [2]$, recall that $|V_{i*}| \ge \tau n$ and so we have
\begin{align*}
    ( |V_{*i}| + 4 \nu /\tau) |V_{i*}|
    &\ge |V_{i*}||V_{*i}| + 4 \nu n^2
    \ge e(V_{i*} ,V_{*i}) + |\mathcal{B}_2(\mathcal{P}_2,G)|
    \ge e(V_{i*} ,V_{*i}) + e(V_{i*},V_{*j}) 
    \\
    &= e(V_{i*},V(G) ) 
    \ge ( \alpha-\sqrt{\nu} ) n | V_{i*} \setminus T |
    \ge ( \alpha-\sqrt{\nu} ) n | V_{i*} | /2
 \end{align*}
As a result, we obtain $|V_{*i}| \geq ( \alpha-\sqrt{\nu})n/2 - 4 \nu /\tau \geq \tau n$, so the result follows.
\end{proof}

\no One can construct an $(\ell^2,\tau,\gamma)$-partition of $G$ from a $(k^2,\tau,\gamma)$-partition of $G$ for $\ell\leq k$.

\begin{proposition}\label{prop:partition-9-to-4}
Let $G$ be a digraph with a $(k^2,\tau,\gamma)$-partition $\mathcal{P}_k=\{V_{ij}:i,j\in [k]\}$.
Let $\{I_1,I_2,\ldots,I_\ell\}$ be a partition of $[k]$ with $I_t\neq \emptyset$ for all $t\in[\ell]$. For $i',j'\in[\ell]$, let $W_{i'j'}=\bigcup_{i\in I_{i'},\,j\in I_{j'}}V_{ij}$. Then, $\mathcal{P}_\ell=\{W_{i'j'}:i',j'\in[\ell]\}$ is an $(\ell^2,\tau,\gamma)$-partition of $G$.
\end{proposition}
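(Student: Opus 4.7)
The plan is to check the two defining properties of a $(\ell^2,\tau,\gamma)$-partition directly, exploiting the fact that the partition $\mathcal{P}_\ell$ is a coarsening of $\mathcal{P}_k$. The whole argument reduces to a set-chasing observation: any good edge with respect to $\mathcal{P}_k$ is still good with respect to $\mathcal{P}_\ell$.

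First I would unpack the definitions of $W_{i'*}$ and $W_{*j'}$. Since $\{I_1,\ldots,I_\ell\}$ partitions $[k]$, for each fixed $i'\in[\ell]$ we have
\[
W_{i'*}=\bigcup_{j'\in[\ell]}W_{i'j'}=\bigcup_{j'\in[\ell]}\bigcup_{i\in I_{i'},\,j\in I_{j'}}V_{ij}=\bigcup_{i\in I_{i'}}V_{i*},
\]
and analogously $W_{*j'}=\bigcup_{j\in I_{j'}}V_{*j}$. These unions are disjoint (as the $V_{ij}$ form a partition), so in particular $|W_{i'*}|=\sum_{i\in I_{i'}}|V_{i*}|$ and $|W_{*j'}|=\sum_{j\in I_{j'}}|V_{*j}|$. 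Because each $I_t$ is non-empty and each $|V_{i*}|,|V_{*j}|\ge \tau n$ by assumption on $\mathcal{P}_k$, we immediately get the size lower bound $|W_{i'*}|,|W_{*j'}|\ge \tau n$ for every $i',j'\in[\ell]$.

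It remains to bound the number of bad edges. The key observation is that $\mathcal{G}_k(\mathcal{P}_k,G)\subseteq \mathcal{G}_\ell(\mathcal{P}_\ell,G)$: indeed, if $e\in E(V_{i*},V_{*i})$ for some $i\in[k]$, then $i$ belongs to a unique $I_{i'}$, so $V_{i*}\subseteq W_{i'*}$ and $V_{*i}\subseteq W_{*i'}$, giving $e\in E(W_{i'*},W_{*i'})\subseteq \mathcal{G}_\ell(\mathcal{P}_\ell,G)$. Taking complements in $E(G)$ yields $\mathcal{B}_\ell(\mathcal{P}_\ell,G)\subseteq \mathcal{B}_k(\mathcal{P}_k,G)$, hence
\[
|\mathcal{B}_\ell(\mathcal{P}_\ell,G)|\le |\mathcal{B}_k(\mathcal{P}_k,G)|\le \gamma n^2.
\]

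The proof is essentially bookkeeping; there is no real obstacle. The only point that requires any care is making sure the indexing in the definition of $W_{i'j'}$ is handled correctly so that the row-unions and column-unions of $\mathcal{P}_\ell$ are precisely the unions of the corresponding row-unions and column-unions of $\mathcal{P}_k$ indexed by $I_{i'}$ and $I_{j'}$.
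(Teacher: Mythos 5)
Your proof is correct and takes essentially the same approach as the paper's: both hinge on the observation that $W_{i'*}=\bigcup_{i\in I_{i'}}V_{i*}$ and $W_{*j'}=\bigcup_{j\in I_{j'}}V_{*j}$, which gives the size bounds immediately and shows that coarsening cannot increase the number of bad edges. The only cosmetic difference is that you argue $\mathcal{G}_k(\mathcal{P}_k,G)\subseteq\mathcal{G}_\ell(\mathcal{P}_\ell,G)$ and take complements, while the paper bounds $|\mathcal{B}_\ell(\mathcal{P}_\ell,G)|$ by directly expanding the double sum $\sum_{i'\neq j'}e(W_{i'*},W_{*j'})$ into a subsum of $\sum_{i\neq j}e(V_{i*},V_{*j})$; these are two phrasings of the same bookkeeping.
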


\begin{proof}
Let $n = |G|$.
For $i'\in [\ell]$, note that $$W_{i'*}=\ds\bigcup_{j'\in[\ell]}W_{i'j'}=\ds\bigcup_{i\in I_{i'},\, j\in[k]}V_{ij}=\ds\bigcup_{i\in I_{i'}}V_{i*}$$ and so $|W_{i'*}|\geq \tau n$. Similarly, we have $|W_{*j'}|\geq \tau n$ for all $j'\in[\ell]$. Moreover, note that
\begin{align*}
|\mathcal{B}_\ell(\mathcal{P}_\ell,G)|&=\sum_{i',j'\in[\ell]\colon  i'\neq j'}e(W_{i'*},W_{*j'})= \sum_{i',j'\in[\ell]\colon  i'\neq j'  }\,\,\sum_{i\in I_{i'},\,j\in I_{j'}}e(V_{i*},V_{*j})\\
&\leq  \sum_{i,j\in[k]\colon  i \neq j}e(V_{i*},V_{*j})=|\mathcal{B}_k(\mathcal{P}_k,G)|,
\end{align*}
so the result follows.  
\end{proof}

\no Next, we show that if a regular digraph is dense and admits a $(k^2,\tau,\gamma)$-partition, then certain unions of parts have size at least roughly the degree of the digraph.

\begin{proposition}\label{prop:dense-regular-size}
Let $1/n\ll\gamma\ll\tau\ll\eps\ll\alpha\ll1$, $k\in\mathbb{N}$, and $G$ be a $d$-regular digraph on $n$ vertices where $d\geq (\alpha+\eps) n$. 
Suppose that $G$ has a $(k^2,\tau,\gamma)$-partition $\mathcal{P}_k=\{V_{ij}:i,j\in [k]\}$. Then we have $|V_{i*}|,|V_{*i}|\geq d-\eps n/2$ for all $i \in [k]$.
In particular, $\mathcal{P}_k$ is a $(k^2,\alpha+\eps/2,\gamma)$-partition for $G$.
\end{proposition}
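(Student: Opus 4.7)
The plan is to use a double-counting argument on edges leaving $V_{i*}$ (and entering $V_{*i}$), combining regularity with the scarcity of bad edges. Fix $i \in [k]$. By regularity, the total number of edges with tail in $V_{i*}$ is exactly $d|V_{i*}|$. On the other hand, partitioning these edges according to the column containing the head, any edge from $V_{i*}$ that does not land in $V_{*i}$ is a bad edge of $\mathcal{P}_k$, hence
\[
d|V_{i*}| \;=\; e(V_{i*},V_{*i}) \;+\; \sum_{j\neq i} e(V_{i*},V_{*j}) \;\leq\; |V_{i*}|\cdot |V_{*i}| + |\mathcal{B}_k(\mathcal{P}_k,G)|.
\]
Since $\mathcal{P}_k$ is a $(k^2,\tau,\gamma)$-partition, we have $|V_{i*}|\geq \tau n$ and $|\mathcal{B}_k(\mathcal{P}_k,G)|\leq \gamma n^2$, so dividing by $|V_{i*}|$ gives
\[
|V_{*i}| \;\geq\; d - \frac{\gamma n^2}{|V_{i*}|} \;\geq\; d - \frac{\gamma n}{\tau}.
\]
Since $\gamma \ll \tau \ll \eps$, we can ensure $\gamma/\tau \leq \eps/2$, so $|V_{*i}| \geq d - \eps n/2$.

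The bound on $|V_{i*}|$ is entirely symmetric: count edges with head in $V_{*i}$. Regularity gives $d|V_{*i}| = e(V_{i*},V_{*i}) + \sum_{j\neq i} e(V_{j*},V_{*i}) \leq |V_{i*}|\cdot |V_{*i}| + \gamma n^2$, and using $|V_{*i}|\geq \tau n$ yields $|V_{i*}|\geq d - \gamma n/\tau \geq d - \eps n/2$.

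For the ``in particular'' statement, recall $d \geq (\alpha+\eps)n$, so the bounds just established give $|V_{i*}|, |V_{*i}| \geq (\alpha+\eps)n - \eps n/2 = (\alpha+\eps/2)n$ for every $i$. The bad-edge bound $|\mathcal{B}_k(\mathcal{P}_k,G)|\leq \gamma n^2$ is inherited unchanged, so $\mathcal{P}_k$ qualifies as a $(k^2,\alpha+\eps/2,\gamma)$-partition.

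There is no real obstacle here: the only thing to notice is that one must count edges in the correct direction to extract each of the two bounds (outgoing edges from $V_{i*}$ control $|V_{*i}|$, incoming edges to $V_{*i}$ control $|V_{i*}|$), and the hierarchy $\gamma \ll \tau \ll \eps$ is exactly what is needed to absorb the error term $\gamma n/\tau$ into $\eps n/2$.
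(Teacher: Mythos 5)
Your proof is correct and follows essentially the same argument as the paper: count edges leaving $V_{i*}$, note that all but at most $|\mathcal{B}_k(\mathcal{P}_k,G)| \leq \gamma n^2$ of them land in $V_{*i}$, and divide by $|V_{i*}| \geq \tau n$ to absorb the error into $\eps n/2$; the bound on $|V_{i*}|$ follows symmetrically. The ``in particular'' clause is handled the same way in both.
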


\begin{proof}
Let $i\in[k]$. By looking at the outneighbours of $V_{i*}$, we have
$$|V_{*i}|\geq \dfrac{e(V_{i*},V_{*i})}{|V_{i*}|}= \dfrac{d|V_{i*}|-\sum_{j\neq i,\,j\in[k]}e(V_{i*},V_{*j})}{|V_{i*}|} \geq d-\dfrac{|\mathcal{B}_k(\mathcal{P}_k,G)|}{|V_{i*}|}\geq  d-\dfrac{\gamma n^2}{|V_{i*}|} \geq d-\dfrac{\gamma n}{\tau}\geq d-\eps n/2$$ since $|\mathcal{B}_k(\mathcal{P}_k,G)|\leq \gamma n^2$, $|V_{i*}|\geq \tau n$ and $\gamma \ll \tau \ll \eps$. Similarly, we have $|V_{i*}|\geq d-\eps n/2$. 
\end{proof}

\no If a $(k^2,\tau,\gamma)$-partition has the minimum possible number of bad edges among all $(k^2,\tau,\gamma)$-partitions of a digraph, then we give it a special name.

\begin{definition}\label{def:extremal-4-partition}
Let $1/n\ll \gamma\ll \tau\ll 1$, $k\in \mathbb{N}$, and
 $G$ be a digraph on $n$ vertices.  A $(k^2,\tau,\gamma)$-partition $\mathcal{P}_k=\{V_{ij}:i,j\in [k]\}$ of $V(G)$ is called an \emph{extremal $(k^2,\tau,\gamma)$-partition} if $\mathcal{B}_k(\mathcal{P}_k,G)\leq \mathcal{B}_k(\mathcal{P}_k',G)$ for all $(k^2,\tau,\gamma)$-partitions $\mathcal{P}_k'$ of $V(G)$.
\end{definition}

\no We establish some useful degree conditions for extremal $(k^2,\tau,\gamma)$-partitions of dense regular digraphs.  

\begin{proposition}\label{prop:degree-property-extremal}
Let $1/n\ll\gamma\ll\tau\ll\alpha\ll1$, $k\in \mathbb{N}$, and $G$ be a $d$-regular digraph on $n$ vertices with $d\geq \alpha  n$ and an extremal $(k^2,\tau,\gamma)$-partition $\mathcal{P}_k=\{V_{ij}:i,j\in [k]\}$.
Then, for all $i,j \in [k]$ and $w\in V_{ij}$, we have $d_{V_{*i'}}^{+}(w)\leq d_{V_{*i}}^{+}(w)$ and $d_{V_{j'*}}^{-}(w)\leq d_{V_{j*}}^{-}(w)$ for all $ i',j'\in[k]$. In particular, we have $d_{V_{j'*}}^{-}(w), d_{V_{*i'}}^{+}(w)\leq d/2$ for all $i'\neq i$ and $j'\neq j$, and $d_{\mathcal{G}_k(\mathcal{P}_k,G)}^{+}(v),d_{\mathcal{G}_k(\mathcal{P}_k,G)}^{-}(v)\geq d/k$ for all $v\in V(G)$.
\end{proposition}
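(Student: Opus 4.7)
The plan is a vertex-swap exchange argument leveraging the extremality of $\mathcal{P}_k$. To show $d_{V_{*i'}}^{+}(w) \leq d_{V_{*i}}^{+}(w)$ for $w \in V_{ij}$ (trivially true when $i'=i$), I will assume for contradiction that some $i' \neq i$ violates this, and construct $\mathcal{P}_k'$ from $\mathcal{P}_k$ by reassigning $w$ from $V_{ij}$ to $V_{i'j}$ (same column, different row). The goal is to show that $\mathcal{P}_k'$ is still a $(k^2,\tau,\gamma)$-partition but with strictly fewer bad edges, contradicting extremality. A symmetric move $V_{ij}\to V_{ij'}$ handles the in-degree statement.

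To verify that $\mathcal{P}_k'$ is a $(k^2,\tau,\gamma)$-partition, the lower bound on row/column sizes must survive the single-vertex move. Double-counting out-edges from $V_{i*}$ gives $d|V_{i*}| - \gamma n^2 \leq e(V_{i*},V_{*i}) \leq |V_{i*}||V_{*i}|$, so $|V_{*i}| \geq d - \gamma n/\tau \geq \alpha n/2$ (symmetrically for $|V_{i*}|$), comfortably larger than $\tau n + 1$ since $\gamma \ll \tau \ll \alpha$. For the bad-edge count, the only edges that can change status under the swap are those incident to $w$, and in fact only the out-edges from $w$: before the move an out-edge $wu$ is good iff $u \in V_{*i}$, afterwards good iff $u \in V_{*i'}$, so the good-edge count changes by $d_{V_{*i'}}^{+}(w) - d_{V_{*i}}^{+}(w) > 0$. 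In-edges to $w$ are unaffected because the column $j$ of $w$ is unchanged, and any change in $V_{j*}$ is only by $\pm w$, which never appears as an in-neighbour of $w$ in the loopless digraph $G$. Edges avoiding $w$ keep both endpoints in their original parts, so their status is preserved. Thus $\mathcal{P}_k'$ contradicts the extremality of $\mathcal{P}_k$.

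For the ``in particular'' consequences, the equality $\sum_{i'' \in [k]} d_{V_{*i''}}^{+}(w) = d$ combined with $d_{V_{*i'}}^{+}(w) \leq d_{V_{*i}}^{+}(w)$ for all $i'$ yields $d_{V_{*i}}^{+}(w) \geq d/k$ (hence $d_{\mathcal{G}_k(\mathcal{P}_k,G)}^{+}(w) \geq d/k$, since good out-edges of $w$ are exactly those ending in $V_{*i}$), and for $i' \neq i$, $d \geq d_{V_{*i}}^{+}(w) + d_{V_{*i'}}^{+}(w) \geq 2 d_{V_{*i'}}^{+}(w)$, giving $d_{V_{*i'}}^{+}(w) \leq d/2$. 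The in-degree statements follow by entirely symmetric arguments.

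The main technical subtlety I anticipate is tracking which edges change good/bad status under the swap; the key observation---that only out-edges from $w$ change status, since the column of $w$ is preserved and $G$ is loopless---is what delivers the clean formula and hence the contradiction. Beyond this, the argument is a routine local-move exchange with respect to the extremal $(k^2,\tau,\gamma)$-partition.
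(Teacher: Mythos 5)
Your proposal is correct and takes essentially the same approach as the paper: a local exchange argument that moves $w$ from $V_{ij}$ to $V_{i'j}$, verifies the move preserves the $(k^2,\tau,\gamma)$-partition property via a double count (the paper delegates this to its Proposition~\ref{prop:dense-regular-size}), observes that only out-edges of $w$ change status so the bad-edge count strictly decreases, and derives the ``in particular'' consequences from $\sum_{i''} d^+_{V_{*i''}}(w)=d$ together with the maximality of $d^+_{V_{*i}}(w)$. Your write-up is a bit more explicit than the paper's (especially in tracking which edges change good/bad status and in spelling out the final pigeonhole step), but the underlying argument is the same.
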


\begin{proof}
Let $\eps$ be a constant such that $\tau \ll \eps \ll \alpha$.
Let $\alpha' = \alpha - \eps$.
Suppose the contrary and without loss of generality that there exists $w\in V_{ij}$ and $a\in[k]$ such that $d_{V_{*a}}^{+}(w)>d_{V_{*i}}^{+}(w)$. Let $V_{ij}'=V_{ij}\backslash\{w\}$, $V_{aj}'=V_{aj}\cup \{w\}$, and $V_{i'j'}'=V_{i'j'}$ for all $(i',j')\in[k]\times[k]\backslash\{(i,j),(a,j)\}$. Let $\mathcal{P}_{k}'=\{V_{i'j'}':i',j'\in[k]\}$. By Proposition~\ref{prop:dense-regular-size}, $$|V_{i*}'|=|V_{i*}|-1\geq d-\eps n/2-1\geq \tau n$$ since $\tau \ll \eps\ll \alpha$. Similarly, we have $|V_{*j}'|\geq \tau n$. Moreover, for all $i'\neq i$ and $j'\neq j$, we know $|V_{i'*}'|\geq |V_{i'*}|\geq \tau n$ and $|V_{*j'}'|\geq |V_{*j'}|\geq \tau n$. On the other hand, we obtain 
$$|\mathcal{B}_k(\mathcal{P}_k',G)|=|\mathcal{B}_k(\mathcal{P}_k,G)|-d_{V_{*a}}^{+}(w)+d_{V_{*i}}^{+}(w)<|\mathcal{B}_k(\mathcal{P}_k,G)|.$$
Hence $\mathcal{P}_{k}'$ is a $(k^2,\tau,\gamma)$-partition of $G$ having fewer bad edges than the extremal $(k^2,\tau,\gamma)$-partition $\mathcal{P}_k$, which is a contradiction. As a result, for all $1\leq i',j'\leq k$, we have $d_{V_{*i'}}^{+}(w)\leq d_{V_{*i}}^{+}(w)$ and $d_{V_{j'*}}^{-}(w)\leq d_{V_{j*}}^{-}(w)$. The rest of the proof is immediate.
\end{proof}

\no For any dense regular oriented graph, we show that certain unions of sets in a $(k^2,\tau,\gamma)$-partition have strictly positive size.

\begin{proposition}\label{prop:quarterleadsnonempty}
Let $1/n<\gamma\ll\tau\ll\eps <1$ be constants, $k\in \mathbb{N}$, and $G$ be a $d$-regular oriented graph on $n$ vertices with $d \geq (1/4+\eps)n$. 
Suppose that $G$ has a $(k^2,\tau,\gamma)$-partition $\mathcal{P}_k=\{V_{ij}:i,j\in [k]\}$.
Then, for $i \in [k]$, we have $$\ds\big|\bigcup_{j\neq i}V_{ij}\big|,\ds\big|\bigcup_{j\neq i}V_{ji}\big|\geq \tau n.$$
\end{proposition}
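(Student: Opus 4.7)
The plan is to first reduce the general $k \geq 2$ case to $k=2$ via Proposition~\ref{prop:partition-9-to-4}, and then exploit the oriented-graph bound $d^+_A(v) + d^-_A(v) \leq |A| - 1$ for $v \in A$, which provides a critical $|A|$-sized saving over the trivial digraph estimate.

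For the reduction, fix $i \in [k]$ and take the coarsening $I_1 = \{i\}$, $I_2 = [k] \setminus \{i\}$. By Proposition~\ref{prop:partition-9-to-4} the resulting $(4,\tau,\gamma)$-partition $\{W_{i'j'}\}$ of $G$ has off-diagonal cells $W_{12} = \bigcup_{j \neq i} V_{ij}$ and $W_{21} = \bigcup_{j \neq i} V_{ji}$; the requirement $|W_{r*}|,|W_{*s}| \geq \tau n$ holds because $W_{2*}$ and $W_{*2}$ contain $V_{j*}$ and $V_{*j}$ respectively for any $j \neq i$ (which exists as $k \geq 2$). So it suffices to prove the statement for $k = 2$.

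Assume $k = 2$. By Proposition~\ref{prop:dense-regular-size}, $|V_{a*}|, |V_{*b}| \geq d - \eps n/2 \geq (1/4 + \eps/2) n$ for all $a, b \in [2]$. If $|V_{11}| \leq \sqrt{\gamma}\, n$ (the case $|V_{22}| \leq \sqrt{\gamma}\, n$ is symmetric), then $|V_{12}| = |V_{1*}| - |V_{11}|$ and $|V_{21}| = |V_{*1}| - |V_{11}|$ are each at least $(1/4 + \eps/2 - \sqrt{\gamma})n \geq \tau n$, and we are done. Otherwise $|V_{11}|, |V_{22}| \geq \sqrt{\gamma}\, n$. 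For each $i \in \{1, 2\}$ with $\bar{i} = 3 - i$, at most $\gamma n^2$ bad edges leave $V_{ii}$ and at most $\gamma n^2$ enter $V_{ii}$, so
\begin{equation*}
e(V_{ii}, V_{ii}) + e(V_{ii}, V_{\bar{i}i}) \geq d|V_{ii}| - \gamma n^2 \quad \text{and} \quad e(V_{ii}, V_{ii}) + e(V_{i\bar{i}}, V_{ii}) \geq d|V_{ii}| - \gamma n^2.
\end{equation*}
Summing these, applying the oriented bounds $2e(V_{ii}, V_{ii}) \leq |V_{ii}|(|V_{ii}| - 1)$, $e(V_{ii}, V_{\bar{i}i}) \leq |V_{ii}||V_{\bar{i}i}|$ and $e(V_{i\bar{i}}, V_{ii}) \leq |V_{i\bar{i}}||V_{ii}|$, and then dividing by $|V_{ii}|$, yields
\begin{equation*}
|V_{ii}| + |V_{12}| + |V_{21}| \geq 2d + 1 - 2\sqrt{\gamma}\, n.
\end{equation*}
Summing this over $i = 1, 2$ and using $\sum_{a, b} |V_{ab}| = n$ gives $|V_{12}| + |V_{21}| \geq 4d - n + 2 - 4\sqrt{\gamma}\, n \geq 4\eps n - 4\sqrt{\gamma}\, n \geq 3\eps n$. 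Finally, Corollary~\ref{cor:Y-and-Z-equal-size} supplies $\bigl||V_{12}| - |V_{21}|\bigr| \leq \gamma n^2 / d \leq 4 \gamma n$, so each of $|V_{12}|, |V_{21}|$ is at least $(3\eps/2 - 2\gamma) n \geq \tau n$.

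The main obstacle to anticipate is pinpointing where the oriented hypothesis enters essentially. The saving in $2 e(V_{ii}, V_{ii}) \leq |V_{ii}|(|V_{ii}| - 1)$ over the trivial digraph estimate $2e(V_{ii},V_{ii}) \leq 2|V_{ii}|(|V_{ii}|-1)$ contributes exactly $|V_{ii}|$ to the right-hand side after dividing, and it is this saving that converts the threshold $d \geq (1/4 + \eps)n$ into $4\eps n$ of slack when the two layer-inequalities for $V_{11}$ and $V_{22}$ are combined; without the oriented condition the analogous argument would only produce the uninformative $2n \geq 2n$, which is why the corresponding threshold for general digraphs jumps up to $n/2$.
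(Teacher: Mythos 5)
Your proof is correct, and its key insight is the same as the paper's: the oriented condition makes the diagonal block $V_{ii}$ sparse (at most $\binom{|V_{ii}|}{2}$ edges rather than twice that), which is precisely what converts the threshold $d > n/4$ into usable slack. The reduction to $k=2$ via Proposition~\ref{prop:partition-9-to-4} also matches the paper.

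The difference is purely in the implementation of the $k=2$ case. The paper assumes WLOG $|V_{11}| \le |V_{22}|$, so $|V_{11}| \le n/2$, assumes $|V_{11}| \ge n/4$ (else $|V_{12}|$ is trivially large), and then writes down a single degree count — the indegrees of $V_{11}$ — to get $|V_{12}| \ge d - |V_{11}|/2 - 4\gamma n \ge (\eps - 4\gamma)n$ in one line. Your version is symmetric: you write all four degree counts (in/out for both $V_{11}$ and $V_{22}$), sum them, use $\sum_{a,b}|V_{ab}| = n$, and introduce a $\sqrt\gamma$ threshold to handle tiny diagonal blocks (where dividing by $|V_{ii}|$ would lose too much). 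The two arguments buy essentially the same thing; the paper's asymmetric WLOG makes it shorter, while your symmetrized double-count is a little more mechanical and avoids the case split on which diagonal block is smaller. One small remark: for $k=1$ the statement degenerates, and for $k\ge 2$ your coarsening $I_2 = [k]\setminus\{i\}$ is nonempty as you observe, so the appeal to Proposition~\ref{prop:partition-9-to-4} is valid.
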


\begin{proof}
First suppose that $k=2$. Without loss of generality, assume $|V_{11}|\leq |V_{22}|$, which gives $d-{|V_{11}|}/{2}\geq d-{n}/{4}\geq \eps n$. By Corollary~\ref{cor:Y-and-Z-equal-size}, we know that $|V_{12}|-|V_{21}|\leq \gamma n^2/d\leq \tau n$. Hence, it suffices to show that $|V_{12}|\geq 2\tau n$. By Proposition~\ref{prop:dense-regular-size}, we have $|V_{11}|+|V_{12}|=|V_{1*}|\geq (1/4+{\eps}/{2})n$. Since $\tau\ll \eps$, we may assume that $|V_{11}|\geq n/4$. Then, since $G$ is oriented and $d$-regular, we can write 
\begin{align*}
d|V_{11}|& =e(V(G),V_{11})=e(V_{11},V_{11})+e(V_{12},V_{11})+e(V_{2*},V_{11})\leq {|V_{11}|^2}/{2}+|V_{12}||V_{11}|+\gamma n^2\\
&\leq |V_{11}|\cdot ( {|V_{11}|}/2+|V_{12}|+4\gamma n).
\end{align*}
This implies $|V_{12}|\geq d-{|V_{11}|}/{2}-4\gamma n\geq (\eps-4\gamma)n\geq 2\tau n$ as required.
\\

\no Now, fix $k\geq 3$ and define $\mathcal{W}_i=\{W^{i}_{ab}:a,b\in[2]\}$ for all $i \in [k]$ where $$W^{i}_{11}=V_{ii},\,\, W^{i}_{12}=\ds\bigcup_{j\neq i}V_{ij},\,\, W^{i}_{21}=\ds\bigcup_{j\neq i}V_{ji},\,\, W^{i}_{22}=\ds\bigcup_{a,b\neq i}V_{ab}.$$ Notice that $\mathcal{W}_i$ is a $(4,\tau,\gamma)$-partition by Proposition~\ref{prop:partition-9-to-4}, so we get $|W^{i}_{12}|,|W^{i}_{21}|\geq\tau n$ from the case $k=2$. Then, we obtain $$\ds\Big|\bigcup_{j\neq i}V_{ij}\Big|=|W^{i}_{12}|\geq\tau n\,\text{ and }\,\ds\Big|\bigcup_{j\neq i}V_{ji}\Big|=|W^{i}_{21}|\geq\tau n,$$ so the result follows for any $k$.  
\end{proof}

\section{Balancing partitions}
\label{ch:balance}

Let $G$ be a regular digraph  or oriented graph and suppose $\mathcal{P}_k$ is a $(k^2,\tau,\gamma)$-partition of $G$ that is ``not balanced'', in the sense that $|V_{i*}| \not= |V_{*i}|$ for some $i \in [k] $. Then, Proposition~\ref{prop:partition-regular-graphs} implies that any Hamilton cycle $C$ must contain a number of bad edges (i.e.\ edges from $\mathcal{B}_k(\mathcal{P}_k,G)$) that depends on the extent of the ``imbalance'' of $\mathcal{P}_k$. Since $\mathcal{B}_k(\mathcal{P}_k,G)$ is small (at most $\gamma n^2$ edges), when constructing a Hamilton cycle of~$G$, it is necessary to first pick the edges of $\mathcal{B}_k(\mathcal{P}_k,G)$ that will be in~$C$. Let us write $\mathcal{Q}$ for the bad edges in our target Hamilton cycle, and note that $\mathcal{Q}$ is a path system. \\

\no
By Proposition~\ref{prop:partition-regular-graphs} (applied with $d=1$ and $G=C$), we must ensure that~$\mathcal{Q}$ satisfies that for all $i \in [k]$,
\begin{align*}
 \sum_{j\neq i}| E(\mathcal{Q})\cap E(V_{i*},V_{*j}) | -  \sum_{j\neq i}| E(\mathcal{Q})\cap E(V_{j*},V_{*i}) |=|V_{i*}|-|V_{*i}|.
\end{align*}
A naive approach to construct $\mathcal{Q}$ is to take a suitable size matching in each of~$G_{ij}$ for $i \ne j$, where as before~$G_{ij}=G[V_{i*},V_{*j}]$.
However, the union of these matchings may not be a path system since it might contain cycles or might satisfy $\Delta^0(\mathcal{Q}) \ge 2$.
The main purpose of this section is to adapt the naive approach to construct $\mathcal{Q}$; see Lemma~\ref{lem:BALANCING-PARTITION}. \\

\no Our first goal is to show that given several edge-disjoint subdigraphs of some  given digraph, we are able to pick a relatively large path system from each subdigraph such that the union of these path systems does not contain a directed cycle; this is Lemma~\ref{lem:CYCLE_FREE}. 
The first two lemmas below are technical results needed to prove this.

	\begin{lemma}\label{lem:MATHCING_LEMMA_I}
		Let $G$ be a digraph with $\Delta^{0}(G)\leq d$. Let $0<\theta<1$, and define the sets $W^{+}=\{w\in G:d^{+}(w)\geq \theta d\}$ and $W^{-}=\{w\in G:d^{-}(w)\geq \theta d\}$. Then, there exists a matching $M$ satisfying 
		\begin{itemize}
		\item[\rm(i)] $4\theta e(M)+|W^{+}|+|W^{-}|\geq e(G)/d $,
		\item[\rm(ii)] $x\notin W^{+}$ and $y\notin W^{-}$ for all $xy\in E(M)$,
		\item[\rm(iii)] $e(M)\leq e(G)/\theta d$.
		\end{itemize}
	\end{lemma}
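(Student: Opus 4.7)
The plan is to apply Vizing's theorem (Theorem~\ref{thm:Vizing}) to the underlying undirected multigraph of a suitable ``cleaned'' subdigraph of $G$, and then possibly truncate the resulting matching.

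First I would dispose of the trivial case $|W^+|+|W^-| \ge e(G)/d$, for which $M = \emptyset$ satisfies (i)--(iii) vacuously. Note that this case automatically covers $\theta d < 1$: then the condition $d^+(v) \ge \theta d$ is equivalent to $d^+(v)\ge 1$, so $W^+$ contains every vertex with an outgoing edge and hence $|W^+| \ge e(G)/\Delta^+(G) \ge e(G)/d$. Therefore I may henceforth assume $\theta d \ge 1$ and $|W^+|+|W^-| < e(G)/d$.

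Next define the subdigraph $G' := \{xy \in E(G): x \notin W^+,\ y \notin W^-\}$. Since $\Delta^+(G),\Delta^-(G) \le d$, deleting the out-edges from $W^+$ costs at most $d|W^+|$ edges and deleting the in-edges to $W^-$ costs at most $d|W^-|$, so
\[
e(G') \;\ge\; e(G) - d(|W^+| + |W^-|).
\]
By the definitions of $W^\pm$, every vertex of $G'$ has out-degree and in-degree at most $\theta d$. Let $H$ be the underlying undirected multigraph of $G'$. Then $e(H) = e(G')$, $\Delta(H) \le 2\theta d$, and $\mu(H) \le 2$ (since $G$ is a digraph, the only source of parallelism is a pair of antiparallel arcs). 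By Theorem~\ref{thm:Vizing}, the largest colour class in a proper edge-colouring of $H$ with at most $\Delta(H) + \mu(H) \le 2\theta d + 2$ colours is a matching $M_0$ in $H$, and hence, read with the orientations inherited from $G$, a matching in $G$, of size at least $e(G')/(2\theta d + 2)$; this matching automatically satisfies (ii).

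Finally, let $M \subseteq M_0$ be a sub-matching with $|M| = \min\bigl(|M_0|,\ \lfloor e(G)/(\theta d)\rfloor\bigr)$, which guarantees (iii). To verify (i), write $w := |W^+|+|W^-|$. If no truncation occurs then $|M| \ge (e(G) - dw)/(2\theta d + 2)$, and (i) is equivalent, after clearing denominators, to $(\theta d - 1)(e(G) - dw) \ge 0$, which holds since both factors are non-negative under our standing assumptions $\theta d \ge 1$ and $w < e(G)/d$. If truncation occurs, then $|M| = \lfloor e(G)/(\theta d)\rfloor$, and using $\theta d \ge 1$ one checks directly that $4\theta |M| \ge e(G)/d$ with room to spare.

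The argument is conceptually straightforward: clean $G$ to kill vertices of the wrong degree type, pass to an undirected multigraph with $\Delta$ and $\mu$ controlled, and invoke Vizing. I expect the main minor obstacle is the bookkeeping around the edge cases ($\theta d < 1$ and the need to truncate an oversized Vizing matching), both of which are absorbed by the opening trivial case and the final truncation step; the short algebraic identity $(\theta d-1)(e(G)-dw)\ge 0$ is the one place where the exact constants $4\theta$ and $2\theta d + 2$ have to align.
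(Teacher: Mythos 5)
Your proof is essentially identical to the paper's: clean $G$ by deleting the edges leaving $W^+$ or entering $W^-$, apply Vizing's theorem to the underlying multigraph (with $\Delta+\mu\le 2\theta d+2$), and truncate the resulting matching to guarantee (iii). The only cosmetic differences are that you dispose of the trivial boundary case a little more generally (recovering $\theta d<1$ as a special case) and truncate to $\lfloor e(G)/\theta d\rfloor$ rather than to $e(H)/\theta d$; the arithmetic verifying (i) is the same chain of inequalities the paper runs.
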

	
	\begin{proof}
		If $\theta d<1$, then we obtain $W^{+}=\{w\in G:d^{+}(w)\geq 1\}$. Then, we have $x\in W^{+}$ for any $xy\in E(G)$, which, in particular, implies $d|W^{+}|\geq e(G)$. Therefore, we can set $M$ to be empty in that case. Hence, we may assume $\theta d\geq 1$.
		Let $H$ be the multigraph obtained from $G$ by deleting all the edges $ab$ with either $a\in W^{+}$ or $b\in W^{-}$, and by making all the edges undirected. Note that we have $\Delta(H)+\mu(H) \leq 2\theta d+2$ and
		\begin{align}\label{eqn:deleting-some-edges}
		e(H)\geq e(G)-d(|W^{+}|+|W^{-}|).	
		\end{align}
	Then, by Theorem \ref{thm:Vizing} (Vizing's theorem for multigraphs), there exists a matching $M_1$ in $H$ of size at least $e(H)/(2\theta d+2)$.
	Moreover, we can assume that $e(M_1)\leq e(H)/\theta d$ because otherwise we can remove some edges from $M_1$. Let $M$ be the corresponding matching in $G$. Clearly (ii) holds. By using $\theta d\geq 1$, we obtain $$\dfrac{e(G)}{\theta d}\geq \dfrac{e(H)}{\theta d}\geq e(M)\geq \dfrac{e(H)}{2\theta d+2}\geq \dfrac{e(H)}{4\theta d},$$ so (iii) holds. Hence, together with \eqref{eqn:deleting-some-edges}, we have $4\theta e(M)+|W^{+}|+|W^{-}|\geq e(G)/d$, proving~(i). 
	\end{proof}
	
\no Now, given some matchings in a graph, we show that one can pick a significant number of edges from each matching such that all the chosen edges form a matching. 	

\begin{lemma}\label{lem:MATCHING_LEMMA_II}
	Let $k,r\in\mathbb{N}$ and $M_1,M_2,\ldots,M_k$ be matchings with $\Delta\left(\bigcup_{i \in [k]} M_i\right)\leq r$. Suppose $e(M_i)>2(r^3+r)^2\ln k$ for all $i \in [k]$. Then, there exists a matching $H\subseteq \bigcup_{i \in [k]}M_i$ with $|E(H)\cap M_i|\geq e(M_i)/(r^2+1)$ for all $i \in [k]$. 
\end{lemma}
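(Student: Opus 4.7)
\noindent\textbf{Proof plan for Lemma~\ref{lem:MATCHING_LEMMA_II}.}
The plan is to construct $H$ by a random rank-based greedy procedure on the edges of $\bigcup_{i} M_i$, and to establish the required lower bound on $|E(H)\cap M_i|$ via a conditional Chernoff argument.

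First, I would assign to each edge $e \in \bigcup_{i \in [k]} M_i$ an independent uniform rank $\rho(e) \in [0,1]$, and set
\[
    H := \bigl\{e \in \textstyle\bigcup_{i} M_i : \rho(e) < \rho(f) \text{ for every edge } f \neq e \text{ of } \textstyle\bigcup_{i} M_i \text{ sharing a vertex with } e\bigr\}.
\]
Then $H$ is automatically a matching, since two adjacent edges cannot both attain the minimum rank in their common neighbourhood. Since $\Delta(\bigcup_{i} M_i) \le r$, every edge $e$ has at most $2(r-1)$ neighbours in $\bigcup_{i} M_i$, so $\Pr(e \in H) \ge 1/(2r-1)$; hence $\mathbb{E}[|H \cap M_i|] \ge e(M_i)/(2r-1) > e(M_i)/(r^2+1)$, providing the slack needed for concentration.

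Next I would upgrade the expected-value bound to a high-probability statement via a conditional independence argument. Fix $i \in [k]$. Since $M_i$ is a matching, no two edges of $M_i$ share a vertex, so every neighbour (in $\bigcup_{i} M_i$) of an edge $e \in M_i$ lies outside $M_i$. Writing $\alpha_e := \min\{\rho(f) : f \neq e \text{ and } f \text{ shares a vertex with } e\}$, the value of $\alpha_e$ depends only on the ranks of edges outside $M_i$. Conditioning on the $\sigma$-algebra $\mathcal{F}$ generated by these outside ranks, the indicators $\mathbf{1}[\rho(e) < \alpha_e]$ for $e \in M_i$ are \emph{conditionally independent} Bernoulli$(\alpha_e)$ variables, since the $\{\rho(e)\}_{e \in M_i}$ are independent uniforms, independent of $\mathcal{F}$. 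Writing $S := \sum_{e \in M_i} \alpha_e$, Theorem~\ref{thm:Chernoff} applied conditionally on $\mathcal{F}$ gives
\[
    \Pr\bigl(|H \cap M_i| \le (1-\delta)\, S \,\big|\, \mathcal{F}\bigr) \;\le\; \exp\!\bigl(-\delta^2 S / 2\bigr) \qquad \text{for any } \delta \in (0, 1].
\]

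The main obstacle is therefore to lower bound $S$ unconditionally with high probability. Since $\mathbb{E}[S] = \sum_{e \in M_i} 1/(\deg(e)+1) \ge e(M_i)/(2r-1)$, the plan is a bounded-differences (McDiarmid) argument on the outside ranks: changing a single $\rho(g)$ with $g \notin M_i$ alters $\alpha_e$ for at most two $e \in M_i$ (as $g$ has two endpoints and $M_i$ is a matching), and alters each such $\alpha_e$ by at most $1$; only $O(r \cdot e(M_i))$ outside ranks are relevant in total. This yields $S \ge (1 - o(1))\, e(M_i)/(2r-1)$ except with probability at most $1/(2k)$, the hypothesis $e(M_i) > 2(r^3+r)^2 \ln k$ providing the required slack. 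Combining this with the conditional Chernoff bound above and taking a union bound over $i \in [k]$ produces, with positive probability, a realisation of the ranks for which $|E(H) \cap M_i| \ge e(M_i)/(r^2+1)$ holds simultaneously for every $i \in [k]$, yielding the desired matching.
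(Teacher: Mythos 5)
Your construction of $H$ via independent uniform ranks (taking an edge whenever it is a strict local minimum among its neighbours) is a genuinely different random process from the one the paper uses. The paper has each \emph{vertex} pick one incident edge uniformly at random and keeps $e=uv$ iff both $u$ and $v$ pick $e$; this gives $\Pr(e\in H)\ge 1/r^2$ and, crucially, the events $\{e\in H\}$ for vertex-disjoint $e$ are genuinely independent, since they are determined by choices at disjoint sets of vertices. Hence $|E(H)\cap M_i|$ stochastically dominates $\mathrm{Bin}(e(M_i),1/r^2)$ and a single application of Chernoff, with no further work, gives exactly the stated constant $2(r^3+r)^2\ln k$ with equality.

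In your edge-rank model, the events $\{e\in H\}$ for $e\in M_i$ are \emph{not} independent. Two edges $e,e'\in M_i$ can share a neighbouring edge $g$; since both $\alpha_e$ and $\alpha_{e'}$ are decreasing in $\rho(g)$, the random variables $\alpha_e$ and $\alpha_{e'}$ are positively correlated, and therefore so are $\{e\in H\}$ and $\{e'\in H\}$ (indeed $\Pr(e\in H,\,e'\in H)=\E[\alpha_e\alpha_{e'}]\ge\E[\alpha_e]\E[\alpha_{e'}]$). This is exactly what forces the two-stage argument you set up, and the conditional Chernoff step is correct. The gap is in the McDiarmid step for $S=\sum_{e\in M_i}\alpha_e$. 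With bounded-difference constant $2$ per outside rank and at most $2(r-1)e(M_i)$ relevant outside ranks, McDiarmid gives $\Pr(S\le\E[S]-t)\le\exp\bigl(-t^2/(4(r-1)e(M_i))\bigr)$. The total available slack between $\E[S]\ge e(M_i)/(2r-1)$ and the target $e(M_i)/(r^2+1)$ must be split between this deviation and the $(1-\delta)$ loss in the conditional Chernoff step. For small $r$ the slack factor $(r^2+1)/(2r-1)$ is tight (it is $5/3$ when $r=2$), and a direct calculation shows the required bound on $e(M_i)$ from the McDiarmid step exceeds $2(r^3+r)^2\ln k$ by a noticeable constant factor when $r=2$ (and plausibly $r=3$). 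The worst-case bounded differences used by McDiarmid badly overestimate the true fluctuation of $S$ here (the actual variance of $S$ is much smaller than the McDiarmid proxy), so a variance-aware inequality (Bernstein-type for functions of independent variables, or an explicit second-moment computation for the specific structure of $\alpha_e$) would be needed to close the gap. As written, the plan does not establish the lemma with the stated constant, though the underlying idea is salvageable for large $r$ and could likely be repaired with a sharper concentration step.
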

	
	\begin{proof}
		Letting $G=\bigcup_{i \in [k]}M_i$, we have $\Delta(G)\leq r$. We mark edges of $G$ randomly as follows. For each vertex $v\in G$, pick an edge incident to $v$ uniformly at random and mark all other edges incident to $v$. Do this independently for every vertex $v$ (so some edges may be marked twice). Then, let $H$ be the graph where all the marked edges are deleted. Note that $H$ is a matching. We now show that $H$ satisfies the desired property with positive probability. 
  Observe that the probability of an edge $uv$ surviving into $H$ is at least $1/r^2$ because the probability of $uv$ being marked due to~$u$ is at least $1/r$, and independently the probability of $uv$ being marked due to $v$ is at least $1/r$. Moreover, these events are independent for vertex-disjoint edges.
  Now, for any $i \in [k]$, let $X_i={\rm Bin}(e(M_i),1/r^2)$. Since $M_i$ is a matching, we have 
		\begin{align*}
		\mathbb{P}\left(|E(H)\cap M_i|\leq \dfrac{e(M_i)}{r^2+1}\right)\leq \mathbb{P}\left(X_i\leq \dfrac{e(M_i)}{r^2+1}\right).    
		\end{align*}
		Note that $\mathbb{E}[X_i]={e(M_i)}/{r^2}$. Hence, by Theorem \ref{thm:Chernoff} (Chernoff bound), we obtain 
		\begin{align*}
		\mathbb{P}\left(X_i\leq \dfrac{e(M_i)}{r^2+1}\right)= \mathbb{P}\left(X_i\leq \dfrac{r^2}{r^2+1}\cdot \mathbb{E}[X_i]\right)\leq \exp{\left(-\frac{\mathbb{E}[X_i]}{2(r^2+1)^2}\right)}=  \exp{\left(\frac{-e(M_i)}{2(r^3+r)^2}\right)}.
		\end{align*}
		
		\noindent Then, by using $e(M_i)> 2(r^3+r)^2\ln k$, we obtain $\mathbb{P}\left(|E(H)\cap M_i|\leq \dfrac{e(M_i)}{r^2+1}\right)< \dfrac{1}{k}$ for each $i \in [k]$. Hence, by the union bound, we have 
		\begin{align*}
		\mathbb{P}\left(|E(H)\cap M_i|\geq \dfrac{e(M_i)}{r^2+1}\text{ for all }i \in [k]\right)>0.    
		\end{align*}

		\noindent Therefore, there exists a matching $H\subseteq \bigcup_{i \in [k]}M_i$ with $|E(H)\cap M_i|\geq \dfrac{e(M_i)}{r^2+1}$ for all $i \in [k]$. 
		\end{proof}
	
	\no By using Lemmas~\ref{lem:MATHCING_LEMMA_I} and~\ref{lem:MATCHING_LEMMA_II}, we will prove an edge selection lemma which will be used in the proof of Lemma~\ref{lem:9-partition-path-selection}.
	
	\begin{lemma}\label{lem:CYCLE_FREE}
	Let $k\in\mathbb{N}$ with $k\leq 10$, let $0<\gamma\ll\alpha<1$ be constants, and let $G$ be a digraph on $n$ vertices. Let $G_1,G_2,\ldots,G_k$ be pairwise edge-disjoint subgraphs of $G$ with $\sum_{i \in [k]}e(G_i)\leq \gamma n^2$ and $\Delta^{0}(G_i)\leq \alpha n$ for each $i\in[k]$. Then, each $G_i$ contains a path system $\mathcal{Q}_i$ such that $\bigcup_{i \in [k]}\mathcal{Q}_i$ is cycle-free and $e(\mathcal{Q}_i)\geq \left\lfloor e(G_i) / \alpha n \right\rfloor$ for all $i \in [k]$.	

	\end{lemma}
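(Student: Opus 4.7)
The plan is to construct each $\mathcal{Q}_i$ as a disjoint union of a ``matching part'' and a ``hub part'', using Lemma~\ref{lem:MATHCING_LEMMA_I} to split $G_i$ into these two parts and Lemma~\ref{lem:MATCHING_LEMMA_II} to stitch the matching parts across $i$ into a single matching (which is automatically cycle-free). I can discard any $G_i$ with $e(G_i)<\alpha n$ by setting $\mathcal{Q}_i:=\emptyset$, so I shall assume $e(G_i)\ge\alpha n$. Applying Lemma~\ref{lem:MATHCING_LEMMA_I} with $d:=\alpha n$ and $\theta:=1/(4(k^2+1))$ produces a matching $M_i\subseteq G_i$ and hub sets $W_i^+:=\{w:d^+_{G_i}(w)\ge\theta\alpha n\}$ and $W_i^-:=\{w:d^-_{G_i}(w)\ge\theta\alpha n\}$ satisfying
\[
\frac{e(M_i)}{k^2+1}+|W_i^+|+|W_i^-|\;\ge\;\frac{e(G_i)}{\alpha n},
\]
with the extra property that no vertex of $W_i^+$ (resp.\ $W_i^-$) is the tail (resp.\ head) of any $M_i$-edge.

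Since each $M_i$ is a matching in $G$, the union $\bigcup_i M_i$ has maximum (undirected) degree at most $k$, so I will apply Lemma~\ref{lem:MATCHING_LEMMA_II} with $r:=k$ to obtain a matching $H\subseteq\bigcup_i M_i$ with $|H\cap M_i|\ge e(M_i)/(k^2+1)$ for every $i$ (the at most $k$ indices where $e(M_i)$ falls below Lemma~\ref{lem:MATCHING_LEMMA_II}'s constant size threshold can be absorbed by handling their few $M_i$-edges together with the hub edges in the next step). Initialising $\mathcal{Q}_i:=H\cap M_i$, the running union $\bigcup_i\mathcal{Q}_i=H$ is a matching and in particular cycle-free. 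I then process, in arbitrary order, every pair $(i,w)$ with $w\in W_i^+\cup W_i^-$, greedily adding to $\mathcal{Q}_i$ a single edge of $G_i$ incident to $w$ (an out-edge $wx_w$ if $w\in W_i^+$, an in-edge $y_ww$ if $w\in W_i^-$), chosen so that (i) the new head/tail has no in-/out-edge already in $\mathcal{Q}_i$, and (ii) the new edge does not close a directed cycle in the current $\bigcup_j\mathcal{Q}_j$.

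This greedy step will always succeed: $w$ has $\ge\theta\alpha n$ candidate edges in $G_i$, while at every moment
\[
\sum_j|\mathcal{Q}_j|\;\le\;|H|+\sum_j\bigl(|W_j^+|+|W_j^-|\bigr)\;\le\;O_k\!\bigl(\tfrac{\gamma n}{\alpha}\bigr)
\]
(using $|W_j^\pm|\le e(G_j)/(\theta\alpha n)$ and $\sum_j e(G_j)\le\gamma n^2$), so the number of candidates ruled out by (i) or (ii) is $O_k(\gamma n/\alpha)$, which is much smaller than $\theta\alpha n$ thanks to $\gamma\ll\alpha$. Condition (i) together with the avoidance property of $M_i$ will force $\Delta^+(\mathcal{Q}_i),\Delta^-(\mathcal{Q}_i)\le 1$, so each $\mathcal{Q}_i$ is a disjoint union of directed paths and cycles; any such cycle would also lie in $\bigcup_j\mathcal{Q}_j$, contradicting (ii), so $\mathcal{Q}_i$ is indeed a path system. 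Finally,
\[
e(\mathcal{Q}_i)\;=\;|H\cap M_i|+|W_i^+|+|W_i^-|\;\ge\;\frac{e(M_i)}{k^2+1}+|W_i^+|+|W_i^-|\;\ge\;\frac{e(G_i)}{\alpha n}\;\ge\;\Big\lfloor\tfrac{e(G_i)}{\alpha n}\Big\rfloor,
\]
as required. The main obstacle is justifying the greedy hub step, where the contrast between the $\Omega_k(\alpha n)$ candidates per hub and the $O_k(\gamma n/\alpha)$ global edge budget, guaranteed by $\gamma\ll\alpha$, is the decisive quantitative input.
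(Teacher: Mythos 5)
Your decomposition mirrors the paper's: split each $G_i$ via Lemma~\ref{lem:MATHCING_LEMMA_I} into a matching $M_i$ plus hub sets $W_i^\pm$, stitch the $M_i$ into a global matching via Lemma~\ref{lem:MATCHING_LEMMA_II}, then add one edge per hub vertex. Where you differ is the hub step: the paper routes each hub edge to a \emph{fresh} vertex (outside $V(N)\cup W\cup V(\bigcup_j\mathcal{Q}_j)$) via a bipartite matching, so every new edge has a degree-1 endpoint and cycle-freeness is automatic; you instead add hub edges greedily with an explicit ``no new directed cycle'' check. That alternative could in principle be made to work, since the paths-to-$w$ in the current $\bigcup_j\mathcal{Q}_j$ involve only $O_k(\gamma n/\alpha)$ vertices versus $\theta\alpha n$ candidates.

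However, your choice $\theta=1/(4(k^2+1))$ leaves a real gap. For indices $i$ with $e(M_i)\leq 2(k^3+k)^2\ln k$ you cannot apply Lemma~\ref{lem:MATCHING_LEMMA_II}, yet in that range $4\theta e(M_i)=e(M_i)/(k^2+1)$ can be as large as roughly $2k^4\ln k\gg 1$, so $|W_i^+|+|W_i^-|$ alone may fall short of $\lfloor e(G_i)/\alpha n\rfloor$ by an amount depending on $k$. Your proposed ``absorption'' cannot fix this: you may not simply throw the excess $M_i$-edges into the greedy step, because (a) those edges are a \emph{fixed} set with no freedom of choice, so you cannot steer around a directed cycle the way you can for a hub vertex with $\theta\alpha n$ out-neighbours, and (b) their endpoints lie outside $W_i^\pm$ by Lemma~\ref{lem:MATHCING_LEMMA_I}(ii), hence have degree below $\theta\alpha n$ in $G_i$ and offer no alternative edges either. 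What you actually need is simply a smaller $\theta$: the paper takes $\theta<1/(8k(k^3+k)^2)$, which guarantees that whenever $4\theta e(M_i)\ge 1$ we automatically have $e(M_i)>2(k^3+k)^2\ln k$ (so Lemma~\ref{lem:MATCHING_LEMMA_II} applies), and whenever $4\theta e(M_i)<1$ the inequality $|W_i^+|+|W_i^-|>e(G_i)/\alpha n-1$ combined with integrality already gives $|W_i^+|+|W_i^-|\ge\lfloor e(G_i)/\alpha n\rfloor$, making $M_i$ dispensable.

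There is also a secondary flaw in the ``arbitrary order'' greedy step. Condition~(i) only controls degrees at the moment of insertion. If for some hub $w'\in W_i^+$ you pick $x_{w'}=w$ with $w\in W_i^-$ and process $w'$ before $w$, then when $w$'s turn comes it already has in-degree~$1$ in $\mathcal{Q}_i$ and no edge $y_ww$ can be added, so the greedy step fails and the edge count is short. This is repairable (e.g.\ forbid $x_w\in W_i^+\cup W_i^-$, costing only $O_k(\gamma n/\alpha)$ extra forbidden candidates, which is what the paper's ``fresh vertex not in $W$'' condition achieves), but as written the claim that any order works is incorrect.
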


\begin{proof}
Since $\gamma\ll \alpha$, we can choose a constant $\theta$ with $\sqrt{8\gamma}/\alpha <\theta<1/\left(8\ln k (k^3+k)^2\right)$. Then, let us define the sets $$W_i^{+}=\{w\in V(G_i):d_{G_i}^{+}(w)\geq\alpha\theta n\}\text{ and }W_i^{-}=\{w\in V(G_i):d_{G_i}^{-}(w)\geq\alpha\theta n\}.$$ By Lemma~\ref{lem:MATHCING_LEMMA_I}, for each $i \in [k]$, we can find a matching $M_i$ in $G_i$ with 
\begin{align*}
4\theta e(M_i)+|W_i^{+}|+|W_i^{-}|\geq e(G_i)/\alpha n,\,\, M_i\subseteq G_i[V-W_i^{+},V-W_i^{-}],\,\, e(M_i)\leq e(G_i)/\alpha \theta n.
\end{align*}
For each $i$, we have either $|W_i^{+}|+|W_i^{-}|>(e(G_i)/\alpha n)-1$ or $4\theta e(M_i)\geq 1$. In the latter case, we have $e(M_i)\geq 2(k^3+k)^2\ln k$ due to the definition of $\theta$. Let $R$ be the set of indices $i \in [k]$ satisfying $4\theta e(M_i)\geq 1$. By applying Lemma~\ref{lem:MATCHING_LEMMA_II} for the matchings $M_i$ with $i\in R$, we find a matching $M\subseteq\bigcup_{i\in R}M_i$ such that $e(M\cap M_i)\geq e(M_i)/(k^2+1)$ for all $i\in R$. Therefore, we have
\begin{align*}
e(M\cap M_i)+|W_i^{+}|+|W_i^{-}|&\geq e(M_i)/(k^2+1) +|W_i^{+}|+|W_i^{-}|\\
&\geq 4\theta e(M_i)+|W_i^{+}|+|W_i^{-}|\geq e(G_i)/\alpha n
\end{align*}
for all $i\in R$. On the other hand, if $i\notin R$, we know $|W_i^{+}|+|W_i^{-}|>(e(G_i)/\alpha n)-1$, which, in particular implies $|W_i^{+}|+|W_i^{-}|\geq \lfloor e(G_i)/\alpha n\rfloor$. Write $N_i=M\cap M_i$ if $i\in R$, write $N=\bigcup_{i\in R}N_i$, and set $N_i=\emptyset$ if $i\notin R$. Thus, we obtain $e(N_i)+|W_i^{+}|+|W_i^{-}|\geq \lfloor e(G_i)/\alpha n\rfloor$ for all $i \in [k] $. By deleting edges in $N_i$ or removing vertices from $W_i^{+}\cup W_i^{-}$, we may assume 
\begin{align*}
e(N_i)+|W_i^{+}|+|W_i^{-}|= \lfloor e(G_i)/\alpha n\rfloor  \text{ for all $i \in [k] $.}
\end{align*}
Let us write $W=\bigcup_{i \in [k] }(W_i^{+}\cup W_i^{-})$. Note that 
\begin{align}
    |V(N)\cup W| & \le 
	\sum_{i \in [k]}\left(2 e (N_i) + |W_i^{+}|+|W_i^{-}|\right)
	\leq 2\cdot \sum_{i \in [k]} (e(G_i)/\alpha\theta n )
	\leq 2\gamma n/\alpha \theta. \label{eqn:bound-non-selected}
\end{align}
We now construct the desired path systems $\mathcal{Q}_1, \ldots \mathcal{Q}_k$ by induction.
Suppose we have found path systems $\mathcal{Q}_1,\ldots,\mathcal{Q}_j$ for some $0\leq j\leq k$ such that $N\cup \left(\bigcup_{i \in [j]}\mathcal{Q}_i\right)$ is cycle-free, and the following hold for all $i \in [j]$: 
\begin{itemize}
	\item[(i)] $N_i\subseteq \mathcal{Q}_i\subseteq G_i$,
	\item[(ii)] $e(\mathcal{Q}_i)= \lfloor e(G_i)/\alpha n \rfloor $.
\end{itemize}

\no If $j=k$, then we are done. If $j<k$, then we construct $\mathcal{Q}_{j+1}$ as follows. First, we define $U=\left(V(N)\cup W\right)\cup V\left(\bigcup_{i \in [j]}\mathcal{Q}_i\right)$. By using \eqref{eqn:bound-non-selected}, we have 
$$|U|\leq 2\gamma n/\alpha\theta+2\sum_{i \in[j] }e(\mathcal{Q}_i)\leq (2\gamma n/\alpha\theta) +(2\gamma n/\alpha).$$
\no We construct the undirected bipartite graph $\mathcal{B}$ with bipartition $(A,B)$ as follows. Let $B=V(G)-U$, and let $A$ be the disjoint union of $W_{j+1}^{+}$ and $W_{j+1}^{-}$. We add the edge $ab$ for each $a\in W_{j+1}^{+}$ and $b\in B$ if $b\in N_G^{+}(a)$, and add the edge $ab$ for each $a\in W_{j+1}^{-}$ and $b\in B$ if $a\in N_{G}^{-}(b)$. Due to the choice of $\theta$, we have
\begin{align*}
d_{\mathcal{B}}(a)
\geq \alpha\theta n- (2\gamma n/\alpha\theta)-(2\gamma n/\alpha)\geq \gamma n/\alpha \geq e(G_{j+1})/\alpha n\geq |W_{j+1}^{+}|+|W_{j+1}^{-}|\geq |A|.
\end{align*}
Therefore, we can greedily pick a matching in $\mathcal{B}$ that covers $A$. Note that the corresponding edges in $G$ with respect to this matching give a path system $\mathcal{Q}_{j+1}'$ in $G_{j+1}$ containing paths of length one or two with $e(\mathcal{Q}_{j+1}')=|W_{j+1}^{+}|+|W_{j+1}^{-}|$ and $E(\mathcal{Q}_{j+1}')\cap E\left(N  \cup \bigcup_{i \in [j]}\mathcal{Q}_i\right)=\emptyset$. 
Moreover, each edge in $\mathcal{Q}_{j+1}'$ will contain a vertex in $W_{j+1}^{+}\cup W_{j+1}^{-}$ and one unique vertex not in~$U$.
Since $x\notin W_{j+1}^{+}$ and $y\notin W_{j+1}^{-}$ for all $xy\in N_{j+1}$, we can add $N_{j+1}$ into $\mathcal{Q}_{j+1}'$ to obtain another path system $\mathcal{Q}_{j+1}$ in $G_{j+1}$ with $e(\mathcal{Q}_{j+1})=\lfloor e(G_{j+1})/\alpha n \rfloor$.\\ 

\no Finally, suppose $N\cup \left(\bigcup_{i \in [j+1]}\mathcal{Q}_i\right)=N\cup \left(\bigcup_{i \in [j]}\mathcal{Q}_i\right)\cup \mathcal{Q}_{j+1}$ has a cycle $C$. Since $N\cup \left(\bigcup_{i \in [j]}\mathcal{Q}_i\right)$ has no cycle, $C$ contains an edge~$e$ in $\mathcal{Q}_{j+1}'$.
However, $e$ contains a unique vertex~$x$ not in~$U$, that is, $x$ has (total) degree~$1$ in~$N\cup \left(\bigcup_{i \in [j+1]}\mathcal{Q}_i\right)$, a contradiction. This completes the inductive construction of the $\mathcal{Q}_i$ and the proof of the lemma. 
As a result, $N\cup \left(\bigcup_{i \in [j+1]}\mathcal{Q}_i\right)$ is cycle-free, and we are done.  
\end{proof}

\no
Suppose $G$ is an oriented graph and consider a $9$-partition $\{V_{ij}:i,j \in [3]\}$ of $V(G)$. For $i,j \in [3]$, $i\neq j$, we say a path system $\mathcal{Q}$ is \emph{type-$ij$} if $E(\mathcal{Q})\subseteq E(V_{i*},V_{*j})$. Our next lemma describes the structure of the graph which is the union of several path systems that are of different types. First some further notation. \\

\no 
We denote the set of all type-$ij$ path systems by $\mathcal{Q}(i,j)$. Let $\mathscr{S}\subset \bigcup_{i\neq j}\mathcal{Q}(i,j)$ be a set consisting of three path systems of different types. We say $\mathscr{S}$ is a \emph{symmetric 3-set} if either $|\mathscr{S}\cap \mathcal{Q}(1,2)|=|\mathscr{S}\cap \mathcal{Q}(2,3)|=|\mathscr{S}\cap \mathcal{Q}(3,1)|=1$ or $|\mathscr{S}\cap \mathcal{Q}(2,1)|=|\mathscr{S}\cap \mathcal{Q}(3,2)|=|\mathscr{S}\cap \mathcal{Q}(1,3)|=1$. Otherwise, we say $\mathscr{S}$ is an \emph{anti-symmetric 3-set}. For an anti-symmetric 3-set $\mathscr{S}$, if $|\mathscr{S}\cap (\mathcal{Q}(1,2)\cup \mathcal{Q}(2,3)\cup \mathcal{Q}(3,1))|=2$ and $|\mathscr{S}\cap (\mathcal{Q}(2,1)\cup \mathcal{Q}(3,2)\cup \mathcal{Q}(1,3))|=1$, then we call the unique path system in $\mathscr{S}\cap (\mathcal{Q}(2,1)\cup \mathcal{Q}(3,2)\cup \mathcal{Q}(1,3))$ a \emph{special element} of $\mathscr{S}$. Similarly, if $|\mathscr{S}\cap (\mathcal{Q}(1,2)\cup \mathcal{Q}(2,3)\cup \mathcal{Q}(3,1))|=1$ and $|\mathscr{S}\cap (\mathcal{Q}(2,1)\cup \mathcal{Q}(3,2)\cup \mathcal{Q}(1,3))|=2$, then we call the unique path system in $\mathscr{S}\cap (\mathcal{Q}(1,2)\cup \mathcal{Q}(2,3)\cup \mathcal{Q}(3,1))$ as a \textit{special element} of $\mathscr{S}$. We will show that the graph induced by $\mathscr{S}$ has some structural properties if $\mathscr{S}$ is a symmetric or anti-symmetric 3-set. First, we need the definition of an anti-directed path. \\

\no
Let $G$ be a digraph.  A subgraph $P$ of $G$ is called an \emph{anti-directed path} in $G$ if its edges can be ordered as $E(P)=\{e_1,e_2,\ldots,e_k\}$ for some $k \in \mathbb{N}$ such that
	\begin{itemize}
	\item[\rm(i)] $(e_1,e_2,\ldots,e_k)$ induces an (undirected) path when we forgot the directions of the edges, and
	\item[\rm(ii)] $(e_1,e_2,\ldots,e_k)$ does not contain a directed path of length at least two.
	\end{itemize}
	An anti-directed path $P$ in $G$ is said to be \emph{maximal} if it is not entirely contained in any other anti-directed path.

\begin{lemma}\label{lem:ADE-BCF-Structure}
Let $\mathcal{P}_3=\{V_{ij}:i,j \in [3]\}$ be a $9$-partition of an oriented graph $G$. 
Let $\mathscr{S}$ be a set consisting of three path systems in $G$ of different types; thus either $\mathscr{S}$ is a symmetric $3$-set or an anti-symmetric $3$-set. Let $H$ be the graph induced by all the paths in $\mathscr{S}$.
If $\mathscr{S}$ is a symmetric $3$-set, then $H$ is the disjoint union of paths and cycles. If $\mathscr{S}$ is an anti-symmetric $3$-set with special element $S$, then $E(H)$ can be partitioned into maximal anti-directed paths $Q$ of length at most three with the following properties:
	\begin{itemize}
	\item[\rm(i)] If $Q$ is a maximal anti-directed path of length two, then $Q$ has a unique edge belonging to~$S$.
	\item[\rm(ii)] If $Q$ is a maximal anti-directed path of length three, then each edge of $Q$ belongs to a distinct path system in $\mathscr{S}$ where the middle edge belongs to~$S$.
	\end{itemize}

\end{lemma}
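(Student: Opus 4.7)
The plan is to treat the two cases of the lemma separately.

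For the symmetric case, I would show that every vertex of $H$ has in- and out-degree at most $1$, from which the disjoint-union-of-paths-and-cycles structure follows immediately. Indeed, symmetry of $\mathscr{S}$ means its three types use each of $\{1,2,3\}$ exactly once as a first index and exactly once as a second index, so each vertex $v \in V_{ij}$ admits out-edges in $H$ only from the unique path system in $\mathscr{S}$ whose first index equals $i$. Since a path system has at most one out-edge at each vertex, $d_H^+(v) \le 1$, and by a symmetric argument $d_H^-(v) \le 1$.

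For the anti-symmetric case, after relabeling the indices $\{1,2,3\}$ and, if needed, reversing all edges of $G$, I would assume the special element $S$ has type $13$. Writing $P_{ij}$ for the path system of type $ij$ in $\mathscr{S}$, this leaves three sub-cases: $\mathscr{S}=\{P_{12},P_{23},P_{13}\}$, $\{P_{12},P_{31},P_{13}\}$, $\{P_{23},P_{31},P_{13}\}$. For each $S$-edge $e=(a,b)$ with $a \in V_{1*}$ and $b \in V_{*3}$, any anti-directed extension of $e$ must keep $a$ as a source and $b$ as a sink. At $a$ this requires another out-edge at $a$, whose only candidate in $\mathscr{S}$ (other than the already-used $P_{13}$) is a $P_{12}$-edge, which is available only in the first two sub-cases. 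Symmetrically, at $b$ we need another in-edge, whose only candidate is a $P_{23}$-edge, available only in the first and third sub-cases. I would therefore assign to each $S$-edge the \emph{associated anti-directed path} formed by $e$ together with whichever of these extensions exists, producing a path of length $1$, $2$, or $3$, and then place each non-$S$-edge that was not used in any extension as a standalone length-$1$ path.

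The verification that this yields a partition of $E(H)$ with properties (i) and (ii) is then routine: a $P_{12}$-edge $(a',c)$ can be claimed only by the unique $S$-edge out of $a'$ (if any), since a path system has at most one out-edge per vertex; similarly for $P_{23}$-edges; and $P_{31}$-edges (in the second and third sub-cases) never participate in extensions at all. The length-$2$ and length-$3$ paths have exactly one $S$-edge by construction (the middle one in the length-$3$ case), and the three edges of a length-$3$ path come from $P_{12}$, $P_{13}$, $P_{23}$ as required.

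The main obstacle is the ``triangle configuration'' in the sub-case $\mathscr{S}=\{P_{12},P_{23},P_{13}\}$: when the two extensions $(a,c)\in P_{12}$ and $(c',b)\in P_{23}$ share the endpoint $c=c'$, the three edges $(a,c),(a,b),(c,b)$ form an undirected triangle rather than an undirected path, so they cannot be combined into a single length-$3$ anti-directed path. I would handle this by keeping only one of the two extensions in the associated length-$2$ path of $e$ and placing the other as a length-$1$ path, then carefully checking that this localized adjustment preserves the partition and the stated properties.
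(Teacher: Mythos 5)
Your symmetric-case argument matches the paper's. Your anti-symmetric normalization (fixing the special element to be of type $13$) is a legitimate alternative to the paper's (which fixes the two non-special types to be $12,23$): your first sub-case $\{P_{12},P_{23},P_{13}\}$ is the paper's first sub-case, and your second and third sub-cases are related to each other by reversal and relabel to the paper's second sub-case $\{\mathcal{Q}_{23},\mathcal{Q}_{12},\mathcal{Q}_{21}\}$. Moreover, the ``triangle configuration'' you flag in $\{P_{12},P_{23},P_{13}\}$ is a genuine obstruction, and it is actually a gap in the paper's own argument as well: the paper deduces from $\Delta^{0}(H)\le 2$ that distinct maximal anti-directed paths are edge-disjoint and hence that they partition $E(H)$, but that inference fails. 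Concretely, if $c\in V_{22}$, $(a,c)\in P_{12}$, $(a,b)\in P_{13}$ and $(c,b)\in P_{23}$, then $\{(a,c),(a,b)\}$ and $\{(a,b),(c,b)\}$ are both maximal anti-directed paths (the undirected union of the three edges is a triangle, not a path, so neither $2$-block can absorb the third edge; and neither can be extended outside $\{a,b,c\}$ since the remaining in/out-degrees of $a$, $b$, $c$ in $H$ are already saturated) yet they share $(a,b)$. In that situation $E(H)$ cannot be partitioned into maximal anti-directed paths at all, so the lemma's conclusion with ``maximal'' is strictly false.

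Where your write-up falls short is in the proposed repair. Keeping $\{(a,b),(a,c)\}$ as a $2$-block and placing $\{(c,b)\}$ as a length-one block yields a valid partition into anti-directed paths of length $\le 3$ satisfying (i) and (ii), but $\{(c,b)\}$ is not maximal (it extends to $\{(a,b),(c,b)\}$), so you have proved the statement with ``maximal'' deleted rather than the lemma as stated. This matters downstream: the later argument in Lemma~\ref{lem:9-partition-path-selection} that the chosen edge set $\mathcal{Q}$ is a path system relies on the claim that a length-two anti-directed sub-path of $\mathcal{Q}$ must lie inside a \emph{single} block of the decomposition; with your fix, the length-two anti-directed pair $\{(a,b),(a,c)\}$ can straddle two blocks if the decomposition happened to pick $\{(a,b),(c,b)\}$ and $\{(a,c)\}$ instead, so the uniqueness step also needs repair. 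A cleaner path forward is to treat the triangle as its own (fourth) block type and specify explicitly how edges are selected from it, or to constrain the choice of the three path systems so that no vertex of $V_{22}$ is simultaneously a head of a $P_{12}$-edge and a tail of a $P_{23}$-edge; merely coercing the triangle into a $2$-block plus a singleton does not by itself close the gap.
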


\begin{proof}
If $\mathscr{S}$ is a symmetric $3$-set, without loss of generality, assume $\mathscr{S}=\{\mathcal{Q}_{23},\mathcal{Q}_{31},\mathcal{Q}_{12}\}$ where the path system $\mathcal{Q}_{ij}$ is type-$ij$. Then, for any $x_{23}y_{23}\in E(\mathcal{Q}_{23})$, $x_{31}y_{31}\in E(\mathcal{Q}_{31})$, $x_{12}y_{12}\in E(\mathcal{Q}_{12})$, we obtain $x_{23}$, $x_{31}$, $x_{12}$ are all distinct since $x_{23}\in V_{2*}$, $x_{31}\in V_{3*}$, and $x_{12}\in V_{1*}$. Similarly, we have $y_{23}$, $y_{31}$, $y_{12}$ are all distinct. Therefore, for any vertex $v\in H$, we have $d^{+}(v), d^{-}(v)\leq 1$, which implies $H$ is the disjoint union of paths and cycles.\\

\no Let $\mathscr{S}$ be an anti-symmetric $3$-set. Without loss of generality, it is enough to examine the cases $\mathscr{S}=\{\mathcal{Q}_{23},\mathcal{Q}_{12},\mathcal{Q}_{13}\}$ and $\mathscr{S}=\{\mathcal{Q}_{23},\mathcal{Q}_{12},\mathcal{Q}_{21}\}$ where $\mathcal{Q}_{ij}$ is type-$ij$. Let us first examine the case $\mathscr{S}=\{\mathcal{Q}_{23},\mathcal{Q}_{12},\mathcal{Q}_{13}\}$. Note that $\mathcal{Q}_{13}$ is the special element of $\mathscr{S}$. For any $x_{23}y_{23}\in E(\mathcal{Q}_{23})$, $x_{13}y_{13}\in E(\mathcal{Q}_{13})$, $x_{12}y_{12}\in E(\mathcal{Q}_{12})$, we have $x_{23}\in V_{2*}$, $x_{13},x_{12}\in V_{1*}$, $y_{12}\in V_{*2}$, $y_{23},y_{13}\in V_{*3}$, which shows that $\Delta^{0}(H)\leq 2$. Thus, we can conclude that two different maximal anti-directed paths in $H$ are edge-disjoint, which implies every edge of $H$ lies in a unique maximal anti-directed path. Let $Q$ be a maximal anti-directed path in~$H$ of length at least two. 
Let $e_i,e_{i+1}$ be two consecutive edges in~$Q$.
It is easy to check that
\begin{align*}
(e_i,e_{i+1}) \in &\left(  E (\mathcal{Q}_{13}) \times   E (\mathcal{Q}_{12}) \right)
\cup 
   \left( E (\mathcal{Q}_{12}) \times E(\mathcal{Q}_{13}) \right)
\\ &\cup 
    \left( E(\mathcal{Q}_{13})    \times E (\mathcal{Q}_{23}) \right)
\cup 
\left( E(\mathcal{Q}_{23}) \times E (\mathcal{Q}_{13}) \right).
\end{align*}

\no Therefore, if $Q$ has two edges, property (i) follows. 
If $Q$ has three consecutive edges $e_i,e_{i+1},e_{i+2}$, then 
\begin{align*}
(e_i,e_{i+1},e_{i+2}) \in E(\mathcal{Q}_{12}) \times  E(\mathcal{Q}_{13}) \times E(\mathcal{Q}_{23}) \text{ or } 
(e_i,e_{i+1},e_{i+2}) \in E(\mathcal{Q}_{23} ) \times  E(\mathcal{Q}_{13}) \times E(\mathcal{Q}_{12}).
\end{align*}
This shows property (ii), and in particular that the middle of the three edges is in the special element~$\mathcal{Q}_{13}$. 
Finally, if $Q$ has at least four edges, take any four consecutive edges. 
These four edges contain two anti-directed paths of length three and the middle edge of each of these paths lies in $\mathcal{Q}_{13}$ from the argument above. Therefore we obtain two consecutive edges in $Q$ both in $\mathcal{Q}_{13}$, which is impossible since $\mathcal{Q}_{13}$ is a path system. \\

\no If $\mathscr{S}=\{\mathcal{Q}_{23},\mathcal{Q}_{12},\mathcal{Q}_{21}\}$, it is easy to check that we have $d_{H}^{+}(v)\leq 2$ and $d_{H}^{-}(v)\leq 1$ for all $v\in V(H)$. Note that $\mathcal{Q}_{21}$ is the special element of $\mathscr{S}$. As before, we see that $E(H)$ can be partitioned into maximal anti-directed paths since $\Delta^{0}(H)\leq 2$. Also, since each anti-directed path of length at least three has at least one vertex of indegree two, we have that all the maximal anti-directed paths in $H$ have at most two edges. Moreover, if $Q$ is a maximal anti-directed path of length two, say $e$ and $f$ are the edges of $Q$, then we have either $(e,f)\in E(Q_{23})\times E(Q_{21})$ or $(e,f)\in E(Q_{21})\times E(Q_{23})$, which completes the proof.\end{proof}

\no We need one more technical proposition before we prove the lemma that shows how to select the bad edges that will be part of our final Hamilton cycle.

\begin{proposition}\label{prop:selection-indicators}
Let $t,x_1,x_2,x_3,x_4,x_5\in\{0,1\}$ be such that \begin{align}\label{eqn:modulo-2-congruence}
	x_1+x_2+x_3\equiv t\equiv x_1+x_4+x_5\pmod{2}.
\end{align}
 Then, one can find $m_i\in\{-1,1\}$ for $i \in[5]$ with
$$m_1x_1+m_2x_2+m_3x_3=t=m_1x_1+m_4x_4+m_5x_5. $$
\end{proposition}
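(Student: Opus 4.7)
My plan is to fix $m_1 = 1$ at the outset, which has the effect of decoupling the two equations. Writing $s := t - x_1$, the task then reduces to independently solving $m_2 x_2 + m_3 x_3 = s$ and $m_4 x_4 + m_5 x_5 = s$. Hypothesis~\eqref{eqn:modulo-2-congruence} translates directly into $x_2 + x_3 \equiv s \equiv x_4 + x_5 \pmod{2}$, and since $t, x_1 \in \{0, 1\}$ we have $s \in \{-1, 0, 1\}$.

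I would then isolate a single auxiliary claim: whenever $y, z \in \{0, 1\}$ satisfy $y + z \equiv s \pmod{2}$ for some $s \in \{-1, 0, 1\}$, there exist $\mu, \nu \in \{-1, 1\}$ with $\mu y + \nu z = s$. The possible values of $\mu y + \nu z$ form the set $\{0\}$, $\{-1, 1\}$, or $\{-2, 0, 2\}$ according to whether $y + z$ equals $0$, $1$, or $2$, and in each case the parity condition together with $|s| \leq 1$ forces $s$ into the achievable set; the required signs can then be read off directly. Applying this claim to $(y, z) = (x_2, x_3)$ and to $(y, z) = (x_4, x_5)$ produces the remaining $m_i$'s, and the proof is complete.

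No serious obstacle is expected, since the entire argument is a finite combinatorial check. The only subtle point is the initial choice $m_1 = 1$: the opposite choice $m_1 = -1$ with $x_1 = 1$ would force the target to be $t - m_1 x_1 = t + 1$, which equals $2$ when $t = 1$ and is unattainable as a signed sum of two $\{0, 1\}$-values when $(x_a, x_b) = (0, 0)$. Starting from $m_1 = 1$ keeps the target $|s| \leq 1$ and sidesteps this issue entirely, so the strategy works uniformly across all eight possible triples $(t, x_1)$ and $(x_2, x_3)$ (and symmetrically for $(x_4, x_5)$).
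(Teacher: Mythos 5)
Your proof is correct. The single lemma you isolate --- that for $y,z\in\{0,1\}$ with $y+z\equiv s\pmod 2$ and $s\in\{-1,0,1\}$ there are signs $\mu,\nu$ with $\mu y+\nu z=s$ --- holds by the three-line check you describe, and fixing $m_1=1$ indeed forces $s=t-x_1\in\{-1,0,1\}$, after which the two remaining equations decouple and the lemma applies to each. The paper instead normalises by WLOG assumptions ($x_2\le x_3$, $x_4\le x_5$, $x_2+x_3\le x_4+x_5$), reduces to the four cases $(x_2+x_3,x_4+x_5)\in\{(0,0),(1,1),(2,2),(0,2)\}$, and produces explicit sign choices in each, using $m_1=-1$ in the $(1,1)$ case. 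Your version buys modularity and uniformity: you show that $m_1=1$ always works, so the two pairs $(x_2,x_3)$ and $(x_4,x_5)$ are handled by the same one-dimensional lemma rather than a joint case split, and you also explain why the opposite choice $m_1=-1$ could overshoot to a target of $2$. Both arguments are finite checks of essentially the same size, but your decomposition makes the underlying mechanism (parity plus boundedness of $|s|$) more transparent.
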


\begin{proof}
Without loss of generality, we can assume $x_2\leq x_3$, $x_4\leq x_5$, and $x_2+x_3\leq x_4+x_5$. By \eqref{eqn:modulo-2-congruence}, we must have $(x_2+x_3,x_4+x_5)\in\{(0,0),(1,1),(2,2),(0,2)\}$.
\begin{enumerate}
\item If $x_2+x_3=0=x_4+x_5$, then we have $t=x_1$ and $x_2=x_3=x_4=x_5=0$. Hence, we only need $m_1x_1=x_1$, which can be done by choosing $m_1=1$.
\item If $x_2+x_3=1=x_4+x_5$, then we have $t=1-x_1$, $x_2=x_4=0$ and $x_3=x_5=1$. Hence, we need $m_1x_1+m_3=1-x_1=m_1x_1+m_5$, which can be done by choosing $m_3=m_5=1$, and $m_1=-1$.
\item If $x_2+x_3=2=x_4+x_5$, then we have $t=x_1$ and $x_2=x_3=x_4=x_5=1$. Hence, we need $m_1x_1+m_2+m_3=x_1=m_1x_1+m_4+m_5$, which can be done by choosing $m_1=1$, $m_2=m_4=1$, and $m_3=m_5=-1$.
\item If $x_2+x_3=0$ and $x_4+x_5=2$, then we have $t=x_1$, $x_2=x_3=0$ and $x_4=x_5=1$. Hence, we need $m_1x_1=x_1=m_1x_1+m_4+m_5$, which can be done by choosing $m_4=1$, $m_5=-1$, and $m_1=1$.\end{enumerate}\end{proof}
\no We are now ready to prove the main result of this section.

\begin{lemma}\label{lem:9-partition-path-selection}
Let $1/n\ll\gamma\ll\tau\ll\alpha\ll1$ be some constants, let $G$ be a $d$-regular oriented graph on $n$ vertices with $d\geq \alpha n$ and an extremal $(9,\tau,\gamma)$-partition~$\mathcal{P}_3=\{V_{ij}:i,j \in [3]\}$.
Then, there exists a path system $\mathcal{Q}$ in $\mathcal{B}_3(\mathcal{P}_3,G)$ such that, writing $a_{ij}= |E(\mathcal{Q})\cap E(V_{i*},V_{*j})|$ for all $i\neq j$, we have

\begin{itemize}
\item[(i)] $e\left(\mathcal{Q}\right)\leq 2\gamma n/\alpha$, and
\item[(ii)] $a_{i*}-a_{*i}=|V_{i*}|-|V_{*i}|$ for all $i \in [3]$, where $a_{i*}=\sum_{j\neq i}a_{ij}$ and $a_{*i}=\sum_{j\neq i}a_{ji}$. 
\end{itemize}
\end{lemma}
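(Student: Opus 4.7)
The plan is to split the proof into two parts: first, choose integer targets $a_{ij}$ satisfying~(ii), then realise them as the signed type-counts of a path system in $\mathcal{B}_3(\mathcal{P}_3,G)$ whose total size satisfies~(i). Set $b_{ij}:=e(G_{ij})$ and $\delta_i:=|V_{i*}|-|V_{*i}|$. Proposition~\ref{prop:partition-regular-graphs} applied to $G$ gives $d\delta_i=\sum_{j\ne i}(b_{ij}-b_{ji})$. Writing $b_{ij}/d=\lfloor b_{ij}/d\rfloor+f_{ij}$ with $f_{ij}\in[0,1)$, the baseline $a_{ij}^{(0)}:=\lfloor b_{ij}/d\rfloor$ gives discrepancies $\epsilon_i:=\delta_i-\sum_{j\ne i}(a_{ij}^{(0)}-a_{ji}^{(0)})=\sum_{j\ne i}(f_{ij}-f_{ji})$ that are integers in $\{-1,0,1\}$ summing to zero. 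I would use Proposition~\ref{prop:selection-indicators} (with $t$ encoding a parity of the $\epsilon_i$ and the $x_i$ encoding which $f_{ij}$ are nonzero) to produce additive adjustments $x_{ij}\in\{0,1\}$ such that $a_{ij}:=a_{ij}^{(0)}+x_{ij}$ satisfies~(ii) exactly. Since $a_{ij}\le b_{ij}/d+1$, summing yields $\sum_{i\ne j}a_{ij}\le \gamma n^2/d + 6\le 2\gamma n/\alpha$ (using $d\ge \alpha n$ and $n$ large), which establishes~(i).

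To realise these targets, first use Proposition~\ref{prop:degree-property-extremal} (valid since $\mathcal{P}_3$ is extremal) to deduce $\Delta^0(G_{ij})\le d/2$ for each $i\ne j$. Then apply Lemma~\ref{lem:CYCLE_FREE} to the six pairwise edge-disjoint subgraphs $G_{ij}$ with parameter $\alpha':=d/(2n)\ge \alpha/2$, which respects the hierarchy $\gamma\ll\alpha'$. This returns path systems $\tilde{\mathcal{Q}}_{ij}\subseteq G_{ij}$ whose union is cycle-free and satisfy $e(\tilde{\mathcal{Q}}_{ij})\ge \lfloor 2b_{ij}/d\rfloor \ge a_{ij}$. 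Restricting each $\tilde{\mathcal{Q}}_{ij}$ to any $a_{ij}$ of its edges yields a cycle-free subgraph $\mathcal{Q}\subseteq \mathcal{B}_3(\mathcal{P}_3,G)$ with the correct signed type-counts.

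The main obstacle is upgrading $\mathcal{Q}$ from cycle-free to a genuine path system, i.e.\ enforcing $\Delta^0(\mathcal{Q})\le 1$; a vertex $v\in V_{ij}$ can simultaneously be the tail of edges in $\tilde{\mathcal{Q}}_{i,j'}$ and $\tilde{\mathcal{Q}}_{i,j''}$ for distinct $j',j''\ne i$, and analogously a common head. I would handle this via Lemma~\ref{lem:ADE-BCF-Structure} applied to the two symmetric $3$-sets $\mathscr{S}_+:=\{\tilde{\mathcal{Q}}_{12},\tilde{\mathcal{Q}}_{23},\tilde{\mathcal{Q}}_{31}\}$ and $\mathscr{S}_-:=\{\tilde{\mathcal{Q}}_{21},\tilde{\mathcal{Q}}_{32},\tilde{\mathcal{Q}}_{13}\}$: within each, the union is already a disjoint union of paths and cycles, so every degree-two violation in $\mathcal{Q}$ must involve one edge from $\mathscr{S}_+$ and one from $\mathscr{S}_-$. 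Each such conflict can be eliminated by swapping an offending edge for a same-type replacement drawn from the slack $e(G_{ij})\gg a_{ij}$. The principal technical challenge is carrying out these swaps simultaneously across all conflicts while preserving cycle-freeness, the exact type-counts $a_{ij}$, and the degree bound $\Delta^0(\mathcal{Q})\le 1$.
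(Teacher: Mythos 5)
Your setup — choosing integer targets $a_{ij}$ compatible with~(ii), then realising them via Lemma~\ref{lem:CYCLE_FREE} applied to the six $G_{ij}$ with $\Delta^0(G_{ij})\le d/2$ (Proposition~\ref{prop:degree-property-extremal}) — is reasonable, and your size bound giving~(i) is fine. The genuine gap is exactly the one you flag at the end: you obtain a cycle-free subgraph with $\Delta^0\le 2$ and propose to repair degree-two conflicts by edge-swaps, but this step is not carried out and does not clearly work. The replacement edges you would swap in come from $G_{ij}\setminus\tilde{\mathcal{Q}}_{ij}$; those edges play no role in the cycle-freeness certificate delivered by Lemma~\ref{lem:CYCLE_FREE}, so a single swap can create a cycle, and the number of conflicts to resolve is of order $\gamma n/\alpha$ and they interact. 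Your argument as stated is incomplete.

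The paper sidesteps this entirely by never attempting to use all six types. After a WLOG that includes not only reordering $\{1,2,3\}$ to make $n_1,n_2\ge 0$ but also swapping indices $1$ and $2$ to force $m_{12}\ge 0$, it shows that the required signed counts can always be realised with $a_{ij}>0$ for exactly three ordered pairs: either $\{12,23,31\}$ (when $m_{31}\ge 0$) or $\{12,23,13\}$ (when $m_{31}<0$), with the other three $a_{ij}=0$. With only three types in play, Lemma~\ref{lem:ADE-BCF-Structure} gives decisive structure: in the symmetric case the cycle-free union of the three path systems already has $\Delta^0\le 1$, so it is a path system and one simply truncates to the target sizes; in the anti-symmetric case the union decomposes into maximal anti-directed paths of length at most three with the special element always in the middle, and Proposition~\ref{prop:selection-indicators} is then used (for a different purpose than yours — to balance parities within that decomposition) to pick one edge from each short anti-directed path so that the result is a path system with the correct counts. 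Restricting to three types is the crucial idea your proposal is missing; with it, the "upgrade to path system" step either disappears (symmetric case) or becomes a clean bounded-length selection problem (anti-symmetric case), rather than an unbounded sequence of interacting swaps.

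A smaller remark: your invocation of Proposition~\ref{prop:selection-indicators} to pick the $x_{ij}\in\{0,1\}$ does not match that proposition's shape (there the $x_i$ are \emph{given} and the signs $m_i$ are sought); in fact for your step the proposition is unnecessary — with $\epsilon_1,\epsilon_2\in\{-1,0,1\}$ one can set $x_{12}=x_{21}=0$ and solve $x_{13}-x_{31}=\epsilon_1$, $x_{23}-x_{32}=\epsilon_2$ directly. That is not the gap; the gap is the path-system upgrade.
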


\begin{proof}
We first give the main idea of the proof. Note that by Proposition~\ref{prop:partition-regular-graphs}, it suffices to find a path system $\mathcal{Q}$ satisfying 
\begin{align}
a_{i*}-a_{*i}= \sum_{j\neq i} e(G_{ij})/d-\sum_{j\neq i}e(G_{ji})/d \label{eqn:balancing-condition}
\end{align}
for each $i\in[3]$. By Proposition~\ref{prop:degree-property-extremal}, we know $\Delta(G_{ij})\leq d/2$, so by using Lemma~\ref{lem:CYCLE_FREE}, we can find path systems $\mathcal{Q}_{ij}$ in $G_{ij}$ such that $\bigcup_{i\neq j}\mathcal{Q}_{ij}$ is cycle-free and $e(\mathcal{Q}_{ij})$ has roughly $2e(G_{ij})/d$ edges. Therefore, we need 
only (roughly) half of the edges from each $\mathcal{Q}_{ij}$ to satisfy \eqref{eqn:balancing-condition}. Moreover, for each $i\neq j$, it makes sense to include edges from only one of $G_{ij}$ and $G_{ji}$. We will choose $\mathcal{Q}^0_{ij}\subseteq \mathcal{Q}_{ij}$, where $\mathcal{Q}^0_{ij}$ has size (roughly) $e(\mathcal{Q}_{ij})/2$ for three different pairs $(i,j)$ and is empty for the remaining three pairs, by using the structural properties of $\bigcup_{i\neq j}\mathcal{Q}_{ij}$ (ensured by Lemma~\ref{lem:ADE-BCF-Structure}) so that $\Delta^0(\bigcup_{i\neq j}\mathcal{Q}^0_{ij})=1$. Since $\bigcup_{i\neq j}\mathcal{Q}^0_{ij}\subseteq \bigcup_{i\neq j}\mathcal{Q}_{ij}$ is cycle-free,  $\Delta^0(\bigcup_{i\neq j}\mathcal{Q}^0_{ij})=1$ guarantees that $\bigcup_{i\neq j}\mathcal{Q}^0_{ij}$ is a path system. Also, $e(\bigcup_{i\neq j}\mathcal{Q}^0_{ij})$ is small enough by the construction since $e(\bigcup_{i\neq j}\mathcal{Q}_{ij})\leq 2|\mathcal{B}_3(\mathcal{P}_3,G)|/d\leq 2\gamma n/\alpha$.\\

\no
Let us write $n_i=|V_{i*}|-|V_{*i}|$ for $i \in [3]$. Since $n_1+n_2+n_3=0$, without loss of generality, we can assume $n_1,n_2\geq 0$. 
Recall $G_{ij} = G[V_{i*},V_{*j}]$, and write $m_{ij}=e(G_{ij})-e(G_{ji})$ for $i,j \in [3]$, $i\neq j$.
Note that $m_{ij} = - m_{ji}$. Without loss of generality, we can assume $m_{12}\geq 0$.
By Proposition~\ref{prop:partition-regular-graphs}, we have
\begin{align}
d n_1&=m_{12}+m_{13}=m_{12}-m_{31},\label{eqn:theta-1-equality}\\
d n_2&=m_{21}+m_{23}=m_{23}-m_{12}.\label{eqn:theta-2-equality}
\end{align}
Since $n_1,n_2\geq 0$, \eqref{eqn:theta-1-equality} and \eqref{eqn:theta-2-equality} imply that $m_{23}\geq m_{12}\geq m_{31}$.
So, it suffices to consider the cases $$m_{23}\geq m_{12}\geq m_{31}\geq 0\,\text{ and }\,m_{23}\geq m_{12}\geq 0\geq m_{31}.$$
Let $m_{12}=dx$ for some $x\geq 0$, and write $x=s+t$ where $s=\lfloor x\rfloor$ and $0\leq t<1$.\\

\no \underline{\textit{Case 1:}} Suppose we have $m_{23}\geq m_{12}\geq m_{31}\geq 0$. Then, we can write $m_{31}=d(x-n_1)$ and $m_{23}=d(x+n_2)$ by using \eqref{eqn:theta-1-equality} and~\eqref{eqn:theta-2-equality}. Let $H = G_{23} \cup G_{31} \cup G_{12} $.
Notice that $e(H)\leq |\mathcal{B}_3(\mathcal{P}_3,G)| \leq\gamma n^2$. Also, by Proposition~\ref{prop:degree-property-extremal}, we know $\Delta^{0}(G_{23}),\Delta^{0}(G_{31}),\Delta^{0}(G_{12})\leq d/2$. By Lemma~\ref{lem:CYCLE_FREE}, we can find path systems $\mathcal{Q}_{23} \subseteq G_{23}$, $\mathcal{Q}_{31} \subseteq G_{31}$, $\mathcal{Q}_{12} \subseteq G_{12}$ such that $\mathcal{Q}_{23}\cup \mathcal{Q}_{31}\cup \mathcal{Q}_{12}$ has no cycle and 
\begin{align*}
e(\mathcal{Q}_{23}) &= \left\lfloor \dfrac{e(G_{23})}{d/2}  \right\rfloor 
\geq \left\lfloor \dfrac{m_{23}}{d/2} \right\rfloor = \left\lfloor 2x+2n_2  \right\rfloor \geq s+n_2,\\
e(\mathcal{Q}_{31}) &= \left\lfloor \dfrac{e(G_{31})}{d/2}  \right\rfloor 
\geq \left\lfloor \dfrac{m_{31}}{d/2} \right\rfloor = \left\lfloor 2x-2n_1  \right\rfloor \geq s-n_1,\\
e(\mathcal{Q}_{12}) &= \left\lfloor \dfrac{e(G_{23})}{d/2}  \right\rfloor 
\geq \left\lfloor \dfrac{m_{12}}{d/2} \right\rfloor = \left\lfloor 2x  \right\rfloor \geq s.
\end{align*}

\no Moreover, by Lemma~\ref{lem:ADE-BCF-Structure}, we have $\mathcal{Q}_{23}\cup \mathcal{Q}_{31}\cup \mathcal{Q}_{12}$ is a disjoint union of paths and cycles since $\{\mathcal{Q}_{23},\mathcal{Q}_{31},\mathcal{Q}_{12}\}$ is a symmetric $3$-set. However, we know it is cycle-free, which implies it is a path system.
Note that \eqref{eqn:theta-1-equality} implies that $d(x-n_1)=m_{31}\geq 0$ by assumption, so we have $s-n_1\geq 0$.
We now define $\mathcal{Q}$ by choosing $s+n_2$ edges from~$\mathcal{Q}_{23}$, $s-n_1$ edges from~$\mathcal{Q}_{31}$, and $s$ edges from~$\mathcal{Q}_{12}$.
Note that 
$$ e\left( \mathcal{Q}\right)\leq \dfrac{e(G_{12})+e(G_{23})+e(G_{31})}{d/2}\leq \dfrac{|\mathcal{B}_3(\mathcal{P}_3,G)|}{d/2}\leq \dfrac{2\gamma n}{\alpha }.$$
Since $a_{23}=s+n_2$, $a_{31}=s-n_1$, $a_{12}=s$ and $a_{21}=a_{32}=a_{13}=0$, the result follows. \\

\no \underline{\textit{Case 2:}} Suppose we have $m_{23} \ge m_{12}\geq 0\geq m_{31}$. 
Recall $m_{12}=dx$. 
Then, we can write $m_{13}=d(n_1-x)$ and $m_{23}=d(n_2+x)$ by using~\eqref{eqn:theta-1-equality} and~\eqref{eqn:theta-2-equality}. 
As with the previous case, we can find path systems $\mathcal{Q}_{23}\subseteq G_{23}$, $\mathcal{Q}_{13}\subseteq G_{13}$, $\mathcal{Q}_{12}\subseteq G_{12}$ such that $\mathcal{Q}_{23}\cup \mathcal{Q}_{13}\cup \mathcal{Q}_{12}$ is cycle-free and 
\begin{align}
\label{eqn:size-path-systems}
e(\mathcal{Q}_{13})= 2n_1 + \lfloor-2x\rfloor,\, e(\mathcal{Q}_{23})&= \lfloor2x\rfloor +2n_2 ,\, e(\mathcal{Q}_{12})= \lfloor2x\rfloor,\\
e(\mathcal{Q}_{13} \cup \mathcal{Q}_{23} \cup \mathcal{Q}_{12}) &\le 2 \gamma n/ \alpha.
\label{eqn:size-path-systems2}
\end{align}
Let $H$ be the graph induced by $\mathcal{Q}_{23}\cup \mathcal{Q}_{13}\cup \mathcal{Q}_{12}$.
Note that $\mathcal{Q}_{13}$ is the special element of the anti-symmetric $3$-set $\{\mathcal{Q}_{23},\mathcal{Q}_{13},\mathcal{Q}_{12}\}$. 
For simplicity, we write $A = 13$, $B = 23$ and $C = 12$ (so e.g.\ $m_A = m_{13}$ and $G_A = G_{13}$).
By Lemma~\ref{lem:ADE-BCF-Structure}, we can decompose $E(H)$ into six sets~$\mathscr{S}_{T}$ with $T\in\{ ABC,AB,AC,A,B,C\}$ such that $\mathscr{S}_T$ is the set of maximal anti-directed paths of length~$|T|$ containing an edge in each $\mathcal{Q}_S$ for $S \in T$, e.g.\ $\mathscr{S}_{ABC}$ is the set of anti-directed paths of length three with one edge in each of $\mathcal{Q}_{13},\mathcal{Q}_{23},\mathcal{Q}_{12}$.\\

\no From the definition of the decomposition, clearly we have
\begin{align*}
   e(\mathcal{Q}_{13}) & = e(\mathcal{Q}_A) = |\mathscr{S}_{ABC}|+|\mathscr{S}_{AB}|+|\mathscr{S}_{AC}|+|\mathscr{S}_{A}|,\\
   e(\mathcal{Q}_{23}) & =  e(\mathcal{Q}_{B}) = |\mathscr{S}_{ABC}|+|\mathscr{S}_{AB}|+|\mathscr{S}_{B}|,\\
   e(\mathcal{Q}_{12}) & = e(\mathcal{Q}_{C}) =
   |\mathscr{S}_{ABC}|+|\mathscr{S}_{AC}|+|\mathscr{S}_{C}|.
\end{align*}
By \eqref{eqn:size-path-systems}, we obtain
\begin{align}
2(|\mathscr{S}_{ABC}|+|\mathscr{S}_{AC}|)+|\mathscr{S}_{AB}|+|\mathscr{S}_A|+|\mathscr{S}_C|&
= e(\mathcal{Q}_{A})+e(\mathcal{Q}_{C})
=2n_1+\lfloor-2t\rfloor+\lfloor2t\rfloor\label{eqn:theta-1-related}\\
|\mathscr{S}_{AB}|+|\mathscr{S}_B|-|\mathscr{S}_{AC}|-|\mathscr{S}_C| &
= e(\mathcal{Q}_{B})-e(\mathcal{Q}_{C})
=
2n_2\label{eqn:theta-2-related}.
\end{align}
Hence, letting $|\mathscr{S}_{T}|\equiv r_T\pmod{2}$ for $T\in\{ABC,AB,AC,A,B,C\}$ where $r_T\in\{0,1\}$ (so $|\mathscr{S}_{T}|\pm r_T$ is even), we have the following equivalence by summing \eqref{eqn:theta-1-related} and \eqref{eqn:theta-2-related}: $$r_{AB}+r_A+r_C\equiv -\lfloor2t\rfloor-\lfloor-2t\rfloor\equiv r_{AC}+r_A+r_B\pmod{2}.$$
Since $0\leq t<1$, we have $-\lfloor2t\rfloor-\lfloor-2t\rfloor\in\{0,1\}$. Then, by Proposition~\ref{prop:selection-indicators}, we can find $i_{AB},i_{AC},i_A,i_B,i_C\in\{-1,1\}$ such that 
\begin{align}
i_{AB}r_{AB}+i_Ar_A+i_Cr_C= -\lfloor2t\rfloor-\lfloor-2t\rfloor= i_{AC}r_{AC}+i_Ar_A+i_Br_B. \label{eqn:indicator-selection}
\end{align}

\no
We now construct $\mathcal{Q} \subseteq H$ as follows. 
Initializing $\mathcal{Q}=\emptyset$, we will add some edges into $\mathcal{Q}$ as follows:
\begin{enumerate}
\item Choose $ \left(|\mathscr{S}_{ABC}|+r_{ABC}\right)/2 $ many paths from $\mathscr{S}_{ABC}$, $\left(|\mathscr{S}_{AB}|+i_{AB}r_{AB}\right)/2$ many paths from $\mathscr{S}_{AB}$, and $\left(|\mathscr{S}_{AC}|+i_{AC}r_{AC}\right)/2$ many paths from $\mathscr{S}_{AC}$. 
For each such path, we add the unique edge from $\mathcal{Q}_{A} \subseteq G_{13}$ to~$\mathcal{Q}$.

\item Take the remaining $\left(|\mathscr{S}_{ABC}|-r_{ABC}\right)/2$ many paths from $\mathscr{S}_{ABC}$. 
For each such path, we add the unique edge from $\mathcal{Q}_{B} \subseteq G_{23}$ and the unique edge from $\mathcal{Q}_{C} \subseteq G_{12}$ to~$\mathcal{Q}$.

\item Take the remaining $\left(|\mathscr{S}_{AB}|-i_{AB}r_{AB}\right)/2$ many paths from $\mathscr{S}_{AB}$. 
For each such path, we add the unique edge from $\mathcal{Q}_{B} \subseteq G_{23}$ to~$\mathcal{Q}$.

\item Take the remaining $\left(|\mathscr{S}_{AC}|-i_{AC}r_{AC}\right)/2$ many paths from $\mathscr{S}_{AC}$. 
For each such path, we add the unique edge from $\mathcal{Q}_{C} \subseteq G_{12}$ to~$\mathcal{Q}$.

\item For each $T\in\{A,B,C\}$, take $\left(|\mathscr{S}_{T}|+i_{T}r_{T}\right)/2$ many paths from $\mathscr{S}_T$. 
Add them to~$\mathcal{Q}$. 
\end{enumerate}
If $\Delta^0(\mathcal{Q}) \ge 2$, then there exists an anti-directed path~$Q'$ of length~2 in~$\mathcal{Q}$.
This path~$Q'$ must be contained in some maximal anti-directed path~$Q^*$ in $\mathscr{S}_{ABC} \cup \mathscr{S}_{AB} \cup \mathscr{S}_{AC}$.
Only in Step~2 do we add more than one edge from a maximal anti-directed path to~$\mathcal{Q}$.
However, the two edges added in that case are not incident by~Lemma~\ref{lem:ADE-BCF-Structure}(ii) as $\mathcal{Q}_{13} = \mathcal{Q}_A$ is the special element. Therefore no such $Q'$ exists, and so $\Delta^0(\mathcal{Q}) \leq 1$.
Recall that $\bigcup \mathcal{Q} \subseteq H$ is cycle-free and so $\mathcal{Q}$ is a path system. 
By~\eqref{eqn:size-path-systems2}
\begin{align*}
e\left(\mathcal{Q}\right) \le e(H) \le 2 \gamma n /\alpha.
\end{align*}
Note that 
\begin{align*}
2(a_{1*} - a_{*1}) & = 
2\left(e\left( \mathcal{Q} \cap G_{12}\right) + e\left( \mathcal{Q} \cap G_{13}\right)\right)
=
2\left(e\left( \mathcal{Q} \cap \mathcal{Q}_{A}\right) + e\left( \mathcal{Q} \cap \mathcal{Q}_{C}\right)\right)
\\ &
= 
2(|\mathscr{S}_{ABC}|+|\mathscr{S}_{AC}|)+|\mathscr{S}_{AB}|+|\mathscr{S}_A|+|\mathscr{S}_C| +{i_{AB}r_{AB}+i_Ar_A+i_Cr_C} = 2n_1,
\end{align*}
where the last equality is due to~\eqref{eqn:theta-1-related} and~\eqref{eqn:indicator-selection}.
Similarly, 
\begin{align*}
2(a_{2*} - a_{*2}) & = 
 2(e( \mathcal{Q} \cap G_{23}) - e(\mathcal{Q} \cap G_{12}))
= 
2(e( \mathcal{Q} \cap G_{B}) - e( \mathcal{Q} \cap G_{C}))
\\
&= (|\mathscr{S}_{AB}|+|\mathscr{S}_B|-|\mathscr{S}_{AC}|-|\mathscr{S}_C|)+({i_{AC}r_{AC}+i_Br_B-i_{AB}r_{AB}-i_Cr_C})
= 2 n_2,
\end{align*}
where the last equality is due to~\eqref{eqn:theta-2-related} and~\eqref{eqn:indicator-selection}. 
So we have $a_{1*} - a_{*1} = n_1$ and  $a_{2*} - a_{*2} = n_2$.
Since $n_1 + n_2 +n_3 = 0$, we deduce that $a_{3*} - a_{*3} = n_3$ as required. 
 \end{proof}

\no 
The previous lemma shows how to obtain the (path system of) bad edges that will be part of our final Hamilton cycle. It will be convenient to suitably contract this path system because the resulting contracted graph will have a ``balanced'' partition and finding a Hamilton cycle in the contracted graph will give us a Hamilton cycle in the original graph by ``uncontracting'' the path system. We now define the right notion of contraction and establish some of its properties.

\begin{definition}\label{def:path-contraction}
Let $G$ be a digraph, $k\in \mathbb{N}$, and $\mathcal{P}_k=\{V_{ij}:i,j\in [k]\}$ be a $k^2$-partition of $V(G)$. Let $\mathcal{Q}$ be a path system in $G$. We define the \emph{contraction of $\mathcal{Q}$ in $G$ with respect to $\mathcal{P}_k$} as follows: for each $Q\in\mathcal{Q}$, create a new vertex $x$ associated to $Q$ such that $N^{-}(x)=N_{G}^{-}(u)$ and $N^{+}(x)=N_{G}^{+}(v)$ where $Q$ goes from $u$ to $v$. If $u\in V_{ij}$ and $v\in V_{i'j'}$, put $x$ into $V_{i'j}$. Then, we delete all the vertices in $\mathcal{Q}$. We call $\mathcal{P}_k'=\{V_{ij}':i,j \in [k]\}$ the \emph{resulting partition} where $V_{ij}'$ is the updated version of $V_{ij}$ for all $i,j \in [k]$, and we denote the \emph{resulting} graph by $G'$.
\end{definition}

\no Since we often use the following fact, we state it as a proposition.

\begin{proposition}
\label{prop:contract-hamilton}
Let $G$ be a digraph, $\mathcal{Q}$ be a path system in $G$, and $\mathcal{P}_k=\{V_{ij}:i,j\in [k]\}$ be a $k^2$-partition of $V(G)$. If $G'$ is the graph obtained from $G$ by contracting $\mathcal{Q}$  with respect to $\mathcal{P}_k$, and $G'$ is Hamiltonian, then so is $G$.
\end{proposition}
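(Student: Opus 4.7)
The plan is to take a Hamilton cycle $H'$ in $G'$ and ``uncontract'' it to a Hamilton cycle $H$ in $G$. For each $Q \in \mathcal{Q}$, let $x_Q$ denote the new vertex in $G'$ associated with $Q$ in Definition~\ref{def:path-contraction}, and write $u_Q$ and $v_Q$ for the start and end vertex of $Q$. The vertex set of $G'$ partitions into (i) the vertices of $V(G) \setminus V(\mathcal{Q})$ (unchanged) and (ii) the new vertices $\{x_Q : Q \in \mathcal{Q}\}$. Since $H'$ is a Hamilton cycle of $G'$, each vertex of $G'$ is visited exactly once.

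For each new vertex $x_Q$, the Hamilton cycle $H'$ traverses $x_Q$ using exactly one inedge $wx_Q$ and one outedge $x_Qw'$. By Definition~\ref{def:path-contraction}, $N^{-}(x_Q) = N_G^{-}(u_Q)$ and $N^{+}(x_Q) = N_G^{+}(v_Q)$, so $wu_Q$ and $v_Qw'$ are genuine edges of $G$. I would then construct $H$ from $H'$ by replacing each occurrence of the vertex $x_Q$ with the entire internal path $Q$ (from $u_Q$ to $v_Q$), keeping the incident edges $wu_Q$ and $v_Qw'$ in place of $wx_Q$ and $x_Qw'$ respectively. The internal edges of $Q$ belong to $E(G)$ by assumption, so every edge of $H$ lies in $G$.

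It remains to check that $H$ is a Hamilton cycle of $G$. It is a closed directed walk by construction. Each vertex of $V(G) \setminus V(\mathcal{Q})$ appears exactly once in $H$ because it appears exactly once in $H'$ and is unaffected by the uncontraction. Each vertex in $V(\mathcal{Q})$ belongs to a unique path $Q \in \mathcal{Q}$ (since path systems are vertex-disjoint) and is therefore inserted exactly once into $H$ when $x_Q$ is replaced by $Q$. Hence $H$ visits every vertex of $G$ exactly once, giving a Hamilton cycle.

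There is really no obstacle here; the statement is essentially a sanity check that the contraction operation of Definition~\ref{def:path-contraction} was set up correctly so that Hamiltonicity lifts. The only thing to be careful about is to confirm that the endpoints $u_Q, v_Q$ of $Q$ (rather than arbitrary internal vertices) inherit the in- and out-neighbourhoods of $x_Q$, which is exactly the content of the definition; the fact that the part $V_{i'j}$ into which $x_Q$ is placed plays no role in this argument is also worth noting.
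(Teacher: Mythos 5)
The paper states this proposition without proof (``Since we often use the following fact, we state it as a proposition.''), so there is no paper argument to compare against; your job here was simply to supply the standard uncontraction argument, and you have done so correctly. The only point where you are slightly imprecise is the sentence ``so $wu_Q$ and $v_Qw'$ are genuine edges of $G$'': if the neighbour $w$ (resp.\ $w'$) is itself one of the new contracted vertices, say $w = x_{Q''}$, then $wu_Q$ is not literally an edge of $G$ --- rather, once $x_{Q''}$ is also replaced by its path, the edge $x_{Q''}x_Q$ of $H'$ becomes the genuine $G$-edge $v_{Q''}u_Q$. This is exactly what your ``replace each occurrence of $x_Q$'' procedure produces when applied to all paths simultaneously, so the conclusion is unaffected; you might just add a clause acknowledging that $w, w'$ may be new vertices and that the claimed edges are then obtained after all contractions are undone. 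Otherwise your argument is clean: it uses precisely the structure of Definition~\ref{def:path-contraction} (endpoints $u_Q,v_Q$ inherit the in- and out-neighbourhoods of $x_Q$), and the vertex-count check at the end is exactly what is needed.
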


\no Next we see that the number of bad edges cannot increase from contracting a path system with respect to the given partition.

\begin{proposition}\label{prop:after-contraction}
Let $1/n\ll \theta,\gamma\ll \tau \le 1$ and $k\in \mathbb{N}$ be constants.
Let $G$ be a digraph on $n$ vertices,  $\mathcal{Q}$ be a path system in $G$, and $\mathcal{P}_k=\{V_{ij}:i,j\in [k]\}$ be a $k^2$-partition of $V(G)$. Let us contract $\mathcal{Q}$ with respect to the partition $\mathcal{P}_k$. Then, we have $|\mathcal{B}_k(\mathcal{P}_k',G')|\leq |\mathcal{B}_k(\mathcal{P}_k,G)|$. 
Moreover, if $\mathcal{P}_k$ is a $(k^2,\tau,\gamma)$-partition of $G$ and $e(\mathcal{Q}) \le \theta n$, then $\mathcal{P}_{k}'$ is a $(k^2,\tau/2,2\gamma)$-partition of $G'$.
\end{proposition}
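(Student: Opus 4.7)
The plan is to transfer the bad-edge bound from $\mathcal{P}_k$ to $\mathcal{P}_k'$ via an injection $\psi\colon E(G')\to E(G)$ that respects the partition indices, and then verify the size conditions directly.

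First I would construct $\psi$ by ``un-contracting'' endpoints. For $e'=(a,b)\in E(G')$, set $\psi(e')=(a^*,b^*)$ where $a^*=v_Q$ if $a=x_Q$ is the contraction vertex associated to the path $Q$ from $u_Q$ to $v_Q$ (and $a^*=a$ otherwise), and analogously $b^*=u_{Q'}$ if $b=x_{Q'}$ (and $b^*=b$ otherwise). By Definition~\ref{def:path-contraction} we have $N^+(x_Q)=N_G^+(v_Q)$ and $N^-(x_{Q'})=N_G^-(u_{Q'})$, so $(a^*,b^*)\in E(G)$; vertex-disjointness of the paths makes the assignments $v_Q\mapsto x_Q$ and $u_{Q'}\mapsto x_{Q'}$ one-to-one, so $\psi$ is injective. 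The key observation is that, by the rule ``$u_Q\in V_{ij}$ and $v_Q\in V_{i'j'}$ implies $x_Q\in V_{i'j}'$'', the contraction vertex $x_Q$ lies in the first-index class $V_{i'*}'$ matching that of $v_Q$ in $\mathcal{P}_k$ and in the second-index class $V_{*j}'$ matching that of $u_Q$; non-contraction vertices stay in their original cells. Therefore if $e'\in E(V_{i*}',V_{*j}')$ in $G'$, then $\psi(e')\in E(V_{i*},V_{*j})$ in $G$ with the same indices. In particular, $\psi$ maps bad edges to bad edges, and injectivity gives $|\mathcal{B}_k(\mathcal{P}_k',G')|\le |\mathcal{B}_k(\mathcal{P}_k,G)|$, proving the first assertion.

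For the second assertion, let $n'=|V(G')|$. Single-vertex paths in $\mathcal{Q}$ do not alter $G$ or the partition under contraction, so I may assume every $Q\in\mathcal{Q}$ has at least one edge; then $|\mathcal{Q}|\le e(\mathcal{Q})\le \theta n$ and $|V(\mathcal{Q})|=e(\mathcal{Q})+|\mathcal{Q}|\le 2\theta n$. Since each path contributes $e(Q)+1$ deleted vertices and one new contraction vertex, $n'=n-e(\mathcal{Q})\ge (1-\theta)n$, which forces $(n')^2\ge n^2/2$ as $\theta$ is sufficiently small. Combining with the first assertion,
\[
|\mathcal{B}_k(\mathcal{P}_k',G')|\le |\mathcal{B}_k(\mathcal{P}_k,G)|\le \gamma n^2\le 2\gamma(n')^2.
\]
For the size bounds, every vertex of $V_{i*}\setminus V(\mathcal{Q})$ survives in $V_{i*}'$, hence
\[
|V_{i*}'|\ge |V_{i*}|-|V(\mathcal{Q})|\ge \tau n-2\theta n\ge (\tau/2) n\ge (\tau/2) n',
\]
using $\theta\ll\tau$; the identical computation applies to $|V_{*j}'|$. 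This yields the $(k^2,\tau/2,2\gamma)$-partition property.

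The only subtle point is the index-preservation observation built into the construction of $\psi$; once that is in place, the bad-edge count and the size bounds follow from elementary arithmetic under $\theta,\gamma\ll\tau$.
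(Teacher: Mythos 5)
Your proof is correct and follows essentially the same approach as the paper's: you un-contract each bad edge of $G'$ to a bad edge of $G$ by replacing a contraction vertex $x_Q$ by the appropriate endpoint of its path, and observe that the contraction rule preserves the relevant first/second partition index, while vertex-disjointness of the paths in $\mathcal{Q}$ gives injectivity. Your write-up is if anything a bit more careful than the paper's (which glosses over the case of an edge between two contraction vertices), and the arithmetic for $n' = n - e(\mathcal{Q})$, $|\mathcal{B}_k(\mathcal{P}_k',G')|\le 2\gamma (n')^2$, and $|V_{i*}'|\ge (\tau/2)n'$ matches.
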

\begin{proof}
Consider a path $P\in\mathcal{Q}$ that goes from $u$ to $v$, let $x$ be the created vertex corresponding to $P$ during the contraction process with $x\in V_{cb}'$. In particular, we have $v \in V_{c*}$. If $xy\in \mathcal{B}_k(\mathcal{P}_{k}',G')$, then $y\notin V_{*c}$ and $y\in N^{+}(v)$, which shows $vy\in \mathcal{B}_k(\mathcal{P}_k,G)$. Similarly, for any bad edge in $G'$ with respect to~$\mathcal{P}_{k}'$, we can find a different bad edge in $G$ with respect to $\mathcal{P}_k$, which shows $|\mathcal{B}_k(\mathcal{P}_{k}',G')|\leq |\mathcal{B}_k(\mathcal{P}_k,G)|$.\\

\no 
Notice that we have $|G'|\geq (1-\theta)n$. Also, since we deleted at most $2\theta n$ vertices and $\theta\ll \tau$, we have $|V_{i*}'|\geq \tau n-2\theta n\geq \tau |G'|/2$ for all $i \in [k]$. Moreover, we get $2(1-\theta)^2>1$ since $\theta\ll1$, which implies $|\mathcal{B}_k(\mathcal{P}_{k}',G')|\leq\gamma n^2\leq 2\gamma (1-\theta)^2n^2\leq 2\gamma|G'|^2$.  
\end{proof}

\no We end this section with a lemma which states that if a path system $\mathcal{Q}$ in $\mathcal{B}(\mathcal{P}_k,G)$ satisfies condition~(ii) of Lemma~\ref{lem:9-partition-path-selection}, then the contraction of $\mathcal{Q}$ with respect to $\mathcal{P}_k$ balances the partition.

\begin{lemma}\label{lem:BALANCING-PARTITION}
Let $k \in \mathbb{N}$, and let $\mathcal{P}_{k}=\{V_{ij}:i,j\in [k]\}$ be a $k^2$-partition for a digraph $G$. Let $\mathcal{Q}$ be a path system in $\mathcal{B}_{k}(\mathcal{P}_{k},G)$ such that, for all $i \in [k]$, $$\sum_{j\neq i}a_{ij}-\sum_{j\neq i}a_{ji}=|V_{i*}|-|V_{*i}|,$$  where $a_{ij}$ denotes the number of edges in $E(\mathcal{Q})\cap E(V_{i*},V_{*j})$ for all $i\neq j$.
Then, the contraction of $\mathcal{Q}$ with respect to $\mathcal{P}_{k}$ results in a digraph $G'$ with a $k^2$-partition $\mathcal{P}'_{k}=\{V'_{ij}:i,j \in [k]\}$ such that $|V_{i*}'|=|V_{*i}'|$ for all $i \in [k]$.
\end{lemma}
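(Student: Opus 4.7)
The plan is to compute $|V'_{i*}|$ and $|V'_{*i}|$ explicitly in terms of the original sizes and the edge counts $a_{ij}$, and then appeal to the hypothesis to conclude their equality. Fix $i \in [k]$, let $V_{i*}^{\mathcal{Q}}$ denote the set of vertices of $\mathcal{Q}$ lying in $V_{i*}$, and let $e_i$ be the number of paths of $\mathcal{Q}$ whose terminal vertex lies in $V_{i*}$. By Definition~\ref{def:path-contraction}, the contraction deletes every vertex of $\mathcal{Q}$ and inserts exactly one new vertex per path, placed in the row determined by the terminal endpoint of that path. Thus
$$|V'_{i*}| = |V_{i*}| - |V_{i*}^{\mathcal{Q}}| + e_i.$$

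The key step is to re-express $|V_{i*}^{\mathcal{Q}}|$ in terms of the $a_{ij}$'s. Every vertex of $V_{i*}^{\mathcal{Q}}$ is either a terminal vertex of its path (contributing $e_i$ in total), or else, by the definition of a path system, has exactly one outgoing edge in $\mathcal{Q}$. Since every edge of $\mathcal{Q}$ is bad, each such outgoing edge lies in $E(V_{i*}, V_{*j})$ for some $j \neq i$, and conversely every edge of $\mathcal{Q}$ counted by $a_{ij}$ (with $j\neq i$) arises this way from a unique non-terminal vertex in $V_{i*}^{\mathcal{Q}}$. Hence $|V_{i*}^{\mathcal{Q}}| = e_i + \sum_{j \neq i} a_{ij}$, which collapses the display above to $|V'_{i*}| = |V_{i*}| - \sum_{j \neq i} a_{ij}$. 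The symmetric argument, using initial endpoints and incoming edges of $\mathcal{Q}$ into $V_{*i}$ in place of terminal endpoints and outgoing edges from $V_{i*}$, yields $|V'_{*i}| = |V_{*i}| - \sum_{j \neq i} a_{ji}$.

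Subtracting and invoking the hypothesis then gives
$$|V'_{i*}| - |V'_{*i}| = \bigl(|V_{i*}| - |V_{*i}|\bigr) - \Bigl(\sum_{j\neq i} a_{ij} - \sum_{j\neq i} a_{ji}\Bigr) = 0,$$
which is the required conclusion. The proof is essentially bookkeeping, so I do not expect any genuine obstacle; the only real care needed is to correctly apply Definition~\ref{def:path-contraction} when determining which row (that of the terminal endpoint) and column (that of the initial endpoint) a new contracted vertex belongs to. A brief sanity check on paths of length one and two is enough to confirm that the outgoing-edge identity $|V_{i*}^{\mathcal{Q}}| = e_i + \sum_{j\neq i} a_{ij}$ is being applied consistently across paths of all lengths.
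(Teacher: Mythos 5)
Your proof is correct and takes essentially the same approach as the paper: the paper contracts the paths one at a time, observing that contracting $Q_p$ decreases $|V_{i*}|$ by exactly $\sum_{j\neq i}a_{ij}^p$ (and $|V_{*i}|$ by $\sum_{j\neq i}a_{ji}^p$), whereas you aggregate over all paths and compute $|V'_{i*}|$ and $|V'_{*i}|$ directly; the underlying bookkeeping identity — that the outgoing $\mathcal{Q}$-edges from non-terminal $\mathcal{Q}$-vertices in $V_{i*}$ are in bijection with the edges counted by $\sum_{j\neq i}a_{ij}$ — is the same step the paper leaves to the reader with "it can be easily checked."
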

\begin{proof}
Let $\mathcal{Q}=\{Q_1,Q_2,\ldots,Q_t\}$. Let $a_{ij}^{p}$ denote the number of edges in $E(Q_p)\cap E(V_{i*},V_{*j})$ for all $1\leq p \leq t$ and $i\neq j$. 
Consider a path $Q_p$, say from $u\in V_{xy}$ to $v\in V_{zt}$. Recall that we delete all the vertices in $Q_p$ and add a new vertex into $V_{zy}$ (see Definition~\ref{def:path-contraction}). By applying Proposition~\ref{prop:partition-regular-graphs} with $d=1$ and $G=Q_p\cup \{vu\}$, we obtain 
\begin{align*}
|V_{i*}\cap V(Q_p)|-|V_{*i}\cap V(Q_p)|=\sum_{j\neq i}a_{ij}^{p}-\sum_{j\neq i}a_{ji}^{p}+\mathbbm{1}\{i=z\}-\mathbbm{1}\{i=y\}    
\end{align*}
for each $i\in[k]$ since $v\in V_{z*}$ and $u\in V_{*y}$. By considering the new vertex added into $V_{zy}$, we see that the contraction of the path $Q_p$ leads to a decrease in $|V_{i*}|-|V_{*i}|$ by $\sum_{j\neq i}a_{ij}^{p}-\sum_{j\neq i}a_{ji}^{p}$.
Since all the paths in $\mathcal{Q}$ can be contracted independently, we have 
\begin{align*}
|V_{i*}'|-|V_{*i}'|=\left(|V_{i*}|-|V_{*i}|\right)-\sum_{p \in [t]}\left( \sum_{j\neq i}a_{ij}^{p}-\sum_{j\neq i}a_{ji}^{p} \right)=\left(|V_{i*}|-|V_{*i}|\right)-\left(\sum_{j\neq i}a_{ij}-\sum_{j\neq i}a_{ji}\right).
\end{align*}
Since we have $\sum_{j\neq i}a_{ij}-\sum_{j\neq i}a_{ji}=|V_{i*}|-|V_{*i}|$, the result follows.  
\end{proof}

\section{Hamilton cycles from partitions}\label{ch:partition-to-hamilton}

The main goal of this section is to prove that regular directed or oriented graphs of suitably high degree that admit a $(k^2, \tau, \gamma)$-partition for suitable $k, \tau, \gamma$ have a Hamilton cycle. We begin by formally defining
 certain contracted graphs associated with $4$-partitions (i.e.\ the graphs $J_i$ discussed in the sketch proof). These will be used in this and the next section. \\

\no
Let $H$ be a (undirected) bipartite graph with bipartition $(A,B)$ and $|A|=|B|=n$. Given a set $K$ of size $n$ and bijections $\phi_A:K\to A$ and $\phi_B:K\to B$, the \emph{identification of $H$ with respect to $(K,\phi_A,\phi_B)$} is defined to be the digraph $G$, where $V(G)=K$ and for each $a,b\in K$, we have $ab\in E(G)$ if and only if $\phi_A(a)\phi_B(b)\in E(H)$.\footnote{If $\phi_A(a)\phi_B(a) \in E(H)$ for some $a \in K$, then we will have a loop $aa \in E(G)$. The small number of loops in $G$ play no role in our arguments, but we keep them for convenience so that $H$ and $G$ have the same number of edges.}  \\

\no	Let $G$ be a digraph and $\mathcal{P}=\{V_{ij}:i,j \in [2]\}$ be a $4$-partition of $V(G)$. 
For each $i \in [2]$,  we define $\mathcal{B}^{i}(\mathcal{P},G)$ to be the (undirected) bipartite graph with bipartition $(V_{i*},V_{*i})$, where, for each $a\in V_{i*}$ and $b\in V_{*i}$, we have $ab\in E(\mathcal{B}^{i}(\mathcal{P},G))$ if and only if $ab\in E(G)$. 
(Although $V_{i*}$ and $V_{*i}$ are not disjoint as subsets of $V(G)$, namely $V_{i*} \cap V_{*i} = V_{ii}$, we duplicate any vertices in~$V_{ii}$, so $\mathcal{B}^{i}(\mathcal{P},G)$ has $|V_{i*}| + |V_{*i}|$ vertices.)  \\
	
\no Let $G$ be a digraph and $\mathcal{P}=\{V_{ij}:i,j \in [2]\}$ be a $4$-partition of $V(G)$ such that $|V_{12}|=|V_{21}|=t>0$. For $i \in [2]$, we call  $\phi^i=(\phi_{i*},\phi_{*i})$ a \emph{proper $i$-pair with respect to $\mathcal{P}$} if  $\phi_{i*}:[t]\cup V_{ii}\to V_{i*}$ and $\phi_{*i}:[t]\cup V_{ii}\to V_{*i}$ are bijections satisfying $\phi_{i*}(x)=\phi_{*i}(x)=x$ for all $x\in V_{ii}$.
In this case we define $\mathcal{J}^{i}(\mathcal{P},G,\phi^i)$ to be the identification of $\mathcal{B}^{i}(\mathcal{P},G)$ with respect to $([t]\cup V_{ii},\phi_{i*},\phi_{*i})$.
Formally, $V(\mathcal{J}^{i}(\mathcal{P},G,\phi^i))=[t]\cup V_{ii}$ and $xy\in E(\mathcal{J}^{i}(\mathcal{P},G,\phi^i))$ if and only if $\phi_{i*}(x)\phi_{*i}(y)\in E(\mathcal{B}^{i}(\mathcal{P},G))$.
One can think of $\mathcal{J}^{i}(\mathcal{P},G,\phi^i)$ as the digraph obtained from $G[V_{i*} \cup V_{*i}]$ by pairing vertices in $V_{i*} \setminus V_{ii}$ with vertices in $V_{*i} \setminus V_{ii}$ and identifying them, where the pairing is determined by $\phi_{i*}$ and $\phi_{*i}$; if we pair $x \in V_{i*} \setminus V_{ii}$ with $y \in V_{*i} \setminus V_{ii}$, the identified vertex has the same outneighbours as $x$ and the same inneighbours as $y$. Note that there is a one-to-one correspondence between the edges in $\mathcal{J}^{i}(\mathcal{P},G,\phi^i)$ and those in $G[V_{i*}, V_{*i}]$. Figure~\ref{fig:CONSTRUCTION-J-FROM-PHI} illustrates this construction by a small example.
\\

\begin{figure}[h]
    \centering

\begin{tikzpicture}[scale=0.8]

\draw[dashed]  (3.7,6.6) -- (3.7,-0.6) -- (13.8,-0.6) -- (13.8,6.6) -- cycle ;

\node at (8.75,-1.8) {\small a proper $1$-pair $\phi^1=(\phi_{1*},\phi_{*1})$ with respect to $\mathcal{P}$};
\node at (8.75,-1.2) {\small Partition $\mathcal{P}$ with $|V_{12}|=|V_{21}|=t>0$, and};

\draw  (4,4.5) -- (4,1.5) -- (7,1.5) -- (7,4.5) -- cycle ;
\node at (5.5,4.8) {$V_{11}$};
\node [style=blackcircle,scale=0.4] (x1) at (5,2) {};
\node [style=blackcircle,scale=0.4] (x2) at (5,4) {};
\node [style=blackcircle,scale=0.4] (x3) at (6,2) {};
\node [style=blackcircle,scale=0.4] (x4) at (6,4) {};

\draw  (7,6) -- (7,5) -- (10,5) -- (10,6) -- cycle ;
\node at (8.5,6.3) {$V_{12}$};
\node [style=blackcircle,scale=0.4] (V12-1) at (7.75,5.5) {};
\node [style=blackcircle,scale=0.4] (V12-2) at (8.5,5.5) {};
\node [style=blackcircle,scale=0.4] (V12-3) at (9.25,5.5) {};

\draw  (7,1) -- (7,0) -- (10,0) -- (10,1) -- cycle ;
\node at (8.5,-0.3) {$V_{21}$};
\node [style=blackcircle,scale=0.4] (V21-1) at (7.75,0.5) {};
\node [style=blackcircle,scale=0.4] (V21-2) at (8.5,0.5) {};
\node [style=blackcircle,scale=0.4] (V21-3) at (9.25,0.5) {};

\draw  (10,4.25) -- (10,1.75) -- (13,1.75) -- (13,4.25) -- cycle ;
\node at (13.3,3) {$[t]$};
\node [style=blackcircle,scale=0.4] (1) at (10.25,3) {};
\node at (10.25,3.4) {$1$};
\node [style=blackcircle,scale=0.4] (2) at (11.5,3) {};
\node at (11.5,3.4) {$2$};
\node [style=blackcircle,scale=0.4] (3) at (12.75,3) {};
\node at (12.75,3.4) {$3$};

\draw[dashed]  (15.7,6.3) -- (15.7,-0.3) -- (23.8,-0.3) -- (23.8,6.3) -- cycle ;

\node at (19.75,-1.2) {\small The digraph $\mathcal{J}^{1}(\mathcal{P},G,\phi^1)$ on the vertex set $[t]\cup V_{11}$};

\draw  (16,4.5) -- (16,1.5) -- (19,1.5) -- (19,4.5) -- cycle ;
\node at (17.5,4.8) {$V_{11}$};
\node [style=blackcircle,scale=0.4] (y1) at (17,2) {};
\node [style=blackcircle,scale=0.4] (y2) at (17,4) {};
\node [style=blackcircle,scale=0.4] (y3) at (18,2) {};
\node [style=blackcircle,scale=0.4] (y4) at (18,4) {};

\draw  (20,4.25) -- (20,1.75) -- (23,1.75) -- (23,4.25) -- cycle ;
\node at (23.3,3) {$[t]$};
\node [style=blackcircle,scale=0.4] (t1) at (20.25,3) {};
\node at (20.25,3.4) {$1$};
\node [style=blackcircle,scale=0.4] (t2) at (21.5,3) {};
\node at (21.5,3.4) {$2$};
\node [style=blackcircle,scale=0.4] (t3) at (22.75,3) {};
\node at (22.75,3.4) {$3$};

\draw [style=REDARROW] (V12-1) to (x4);
\draw [style=BLUEARROW] (x3) to (V21-2);
\draw [style=GREENARROW] (V12-2) to (V21-3);
\draw [style=GRAYARROW] (x2) to (x1);
\draw [style=ORANGEARROW] (x1) to (V21-1);

\node at (11.3,4.75) {$\phi_{1*}\vert_{[t]}$};
\node at (11.3,1.25) {$\phi_{*1}\vert_{[t]}$};


\begin{scope}[thick,decoration={
    markings,
    mark=at position 0.25 with {\arrow{>}}}
] 
\draw[dashed,postaction={decorate}] (1) to (V12-1);
\draw[dashed,postaction={decorate}] (2) to (V12-2);
\draw[dashed,postaction={decorate}] (3) to (V12-3);
\draw[dashed,postaction={decorate}] (1) to (V21-1);
\draw[dashed,postaction={decorate}] (2) to (V21-2);
\draw[dashed,postaction={decorate}] (3) to (V21-3);

\end{scope}

\draw [style=REDARROW] (t1) to (y4);
\draw [style=BLUEARROW] (y3) to (t2);
\draw [style=GREENARROW] (t2) to (t3);
\draw [style=GRAYARROW] (y2) to (y1);
\draw [style=ORANGEARROW] (y1) to (t1);

\end{tikzpicture}

    \caption{An illustration for how $\mathcal{J}^{1}(\mathcal{P},G,\phi^1)$ is constructed.}
    \label{fig:CONSTRUCTION-J-FROM-PHI}
\end{figure}

\no	
The first proposition shows how Hamiltonicity of $\mathcal{J}^i$ translates into Hamiltonicity for $G$.

\begin{proposition}\label{prop:identification-Hamilton-implies-Hamilton}
Let $G$ be a digraph on $n$ vertices, and let $\mathcal{P}=\{V_{ij}:i,j \in [2]\}$ be a $4$-partition of~$V(G)$ with $|V_{12}|=|V_{21}|>0$. Suppose that for every $i \in [2]$ and every proper $i$-pair $\phi^i$ with respect to $\mathcal{P}$, we have that $\mathcal{J}^{i}(\mathcal{P},G,\phi^i)$ is Hamiltonian. Then, $G$ is Hamiltonian.
\end{proposition}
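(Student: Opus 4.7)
The plan is to invoke the Hamiltonicity hypothesis on both sides of the partition, choosing the two identifications carefully so that the resulting Hamilton cycles in $\mathcal{J}^1$ and $\mathcal{J}^2$ glue into a single Hamilton cycle of $G$. First, enumerate $V_{12}=\{x_1,\ldots,x_t\}$ and $V_{21}=\{y_1,\ldots,y_t\}$, and take the obvious proper $1$-pair $\phi^1$ with $\phi_{1*}(i)=x_i$ and $\phi_{*1}(i)=y_i$ for $i\in[t]$ (and identity on $V_{11}$). By hypothesis, $\mathcal{J}^1(\mathcal{P},G,\phi^1)$ has a Hamilton cycle $H_1$, which visits the vertices of $[t]$ in some cyclic order $i_1,\ldots,i_t$; define the cyclic permutation $\sigma$ on $[t]$ by $\sigma(i_k)=i_{k+1}$ (indices modulo $t$). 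Using the one-to-one correspondence between edges of $\mathcal{J}^1$ and edges of $G[V_{1*},V_{*1}]$, I would lift $H_1$ to internally vertex-disjoint directed paths $Q^1_1,\ldots,Q^1_t$ in $G$, where each $Q^1_j$ runs from $x_j$ to $y_{\sigma(j)}$ with interior in $V_{11}$, and together they cover $V_{1*}\cup V_{*1}$.

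Next, I would use $\sigma$ to choose the second proper pair: set $\phi_{2*}(i)=y_{\sigma(i)}$ and $\phi_{*2}(i)=x_i$ for $i\in[t]$ (with identity on $V_{22}$), which is a valid proper $2$-pair since $\sigma$ is a bijection. Applying the hypothesis again, $\mathcal{J}^2(\mathcal{P},G,\phi^2)$ has a Hamilton cycle $H_2$ that visits $[t]$ in some cyclic order $j_1,\ldots,j_t$, giving a cyclic permutation $\pi$ on $[t]$ via $\pi(j_k)=j_{k+1}$. The same lifting yields internally vertex-disjoint directed paths $Q^2_1,\ldots,Q^2_t$ in $G$, where $Q^2_j$ runs from $\phi_{2*}(j)=y_{\sigma(j)}$ to $\phi_{*2}(\pi(j))=x_{\pi(j)}$ with interior in $V_{22}$, together covering $V_{2*}\cup V_{*2}$.

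The concluding step is to concatenate the two families alternately starting at $x_1$:
\[
 x_1 \xrightarrow{Q^1_1} y_{\sigma(1)} \xrightarrow{Q^2_1} x_{\pi(1)} \xrightarrow{Q^1_{\pi(1)}} y_{\sigma\pi(1)} \xrightarrow{Q^2_{\pi(1)}} x_{\pi^2(1)} \to \cdots .
\]
The crucial alignment is that $Q^1_j$ ends exactly where $Q^2_j$ begins, namely at $y_{\sigma(j)}$, and $Q^2_j$ ends exactly where $Q^1_{\pi(j)}$ begins, namely at $x_{\pi(j)}$. Since $\pi$ is a single cycle on $[t]$, after $t$ double steps the walk returns to $x_1$, having used each $x_j$, each $y_k$, and every vertex of $V_{11}\cup V_{22}$ exactly once. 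This is a Hamilton cycle of $G$.

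The one subtle point is the choice of $\phi^2$ in terms of $\sigma$. With the naive identification $\phi_{2*}(i)=y_i$, $\phi_{*2}(i)=x_i$, the combined cyclic structure would be governed by $\pi\sigma$, which need not be a single cycle even when $\pi$ and $\sigma$ individually are (for $t=2$ with $\pi=\sigma=(1\,2)$ one gets $\pi\sigma=\mathrm{id}$). Shifting the second identification by $\sigma$ precisely absorbs this composition, so that cyclicity of $\pi$---automatic because $\mathcal{J}^2$ admits a Hamilton cycle---is exactly what is needed to close the combined walk into a Hamilton cycle of $G$.
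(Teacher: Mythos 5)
Your proof is correct and follows essentially the same strategy as the paper's: obtain a Hamilton cycle in $\mathcal{J}^1$ for an arbitrary proper $1$-pair, break it into paths between consecutive $[t]$-vertices and lift them to paths in $G[V_{1*}\cup V_{*1}]$, then choose the proper $2$-pair so that each identified vertex inherits the out-edges of one path's endpoint and the in-edges of its start, after which a Hamilton cycle in $\mathcal{J}^2$ unfolds directly to one in $G$. Your closing remark about why the naive choice of $\phi^2$ fails (the composition $\pi\sigma$ need not be cyclic) is a nice articulation of exactly the point the paper's construction of $\phi^2$ quietly handles.
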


\begin{proof}
Let $|V_{12}|=|V_{21}|=t$ and $\phi^1$ be a proper 1-pair with respect to $\mathcal{P}$. Consider a Hamilton cycle~$C$ in $\mathcal{J}^{1}(\mathcal{P},G,\phi^1)$.
Recall that the vertex set of $\mathcal{J}^{1}(\mathcal{P},G,\phi^1)$ is $[t] \cup V_{11}$.
Let $p_1,\ldots,p_t$ be the order in which the vertices in $[t]$ are visited by $C$ so that $C$ can be partitioned into paths $P_1, \ldots P_t$ where $P_r$ is a path from $p_r$ to $p_{r+1}$ (with the convention that $p_{t+1}=p_1$). Each $P_r$ corresponds to a path $P_r^1$ in $G[V_{11} \cup V_{12} \cup V_{21}]$ from $\phi_{1*}(p_r) \in V_{12}$ to $\phi_{*1}(p_{r+1}) \in V_{21}$, and moreover the paths $P_1^1, \ldots, P_t^1$ are vertex-disjoint and span $V_{11} \cup V_{12} \cup V_{21}$.\\

\no Let $\phi^2$ be the proper $2$-pair with respect to $\mathcal{P}$ satisfying $\phi_{2*}(p_r) = \phi_{*1}(p_{r+1}) \in V_{21}$ and $\phi_{*2}(p_r)=\phi_{1*}(p_r) \in V_{12}$ for all $r \in [t]$. Note that $\mathcal{J}^{2}(\mathcal{P},G,\phi^2)$ can be obtained from $G[V_{2*} \cup V_{*2}]$ by identifying the start and end points of $P_r^1$ for each $r$ and calling the resulting vertex $p_r$ (here we keep only the inedges of the start point $\phi_{1*}(p_r)$  and the outedges of the end point $\phi_{*1}(p_{r+1})$). Since $\mathcal{J}^{2}(\mathcal{P},G,\phi^2)$ has some Hamilton cycle $H$, we see that $G$  also has a Hamilton cycle, obtained by replacing each vertex $p_r$ in $H$ with the path $P^1_r$. 
\end{proof}

\no Next, we will prove that digraphs admitting a $(4,1/3,\gamma)$-partition with additional degree conditions are Hamiltonian.
Recall that for a $k^2$-partition $\mathcal{P}_k=\{V_{ij}:i,j\in[k]\}$ of $V(G)$, the set of good edges was defined as $\mathcal{G}_k(\mathcal{P}_k,G)=\bigcup_{i}E(V_{i*},V_{*i})$ (see Definition~\ref{def:good-edges}), and we also think of $\mathcal{G}_k(\mathcal{P}_k,G)$ as the subdigraph of $G$ with the vertex set consisting of those vertices incident to edges in $\mathcal{G}_k(\mathcal{P}_k,G)$.

\begin{lemma}\label{lem:digraph-hamilton}
Let $1/n \ll \gamma,\rho\ll \eps\ll1$ be constants. 
Let $G$ be a digraph on $n$ vertices with a $(4,1/3,\gamma)$-partition $\mathcal{P}=\{V_{ij}:i,j \in [2]\}$. Suppose that
\begin{itemize}
\item[\rm(i)] $d^{+}_{\mathcal{G}_2(\mathcal{P},G)}(v),d^{-}_{\mathcal{G}_2(\mathcal{P},G)}(v)\geq (1/3+\eps)n$ holds for all but at most $\rho n$ vertices $v \in V(G)$,
\item[\rm(ii)]$\delta^0 ( \mathcal{G}_2(\mathcal{P},G) ) \geq n/20$, 
\item[\rm(iii)] $|V_{12}|=|V_{21}|>0$.
\end{itemize}
	
	\no Then $G$ is Hamiltonian.

\end{lemma}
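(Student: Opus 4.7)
The plan is to reduce Hamiltonicity of $G$ to Hamiltonicity of the identification digraphs $\mathcal{J}^i(\mathcal{P},G,\phi^i)$ via Proposition~\ref{prop:identification-Hamilton-implies-Hamilton}, and then to verify that these identifications meet the hypotheses of Corollary~\ref{cor:Higher-Than-Half-For-All-But-Few}.

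First, since condition~(iii) ensures $|V_{12}|=|V_{21}|>0$, Proposition~\ref{prop:identification-Hamilton-implies-Hamilton} lets me reduce the problem to showing, for each $i\in[2]$ and each proper $i$-pair $\phi^i$, that the digraph $H := \mathcal{J}^i(\mathcal{P},G,\phi^i)$ is Hamiltonian. By symmetry it suffices to treat $i=1$.

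Next, I would estimate the size and degrees of $H$. Since $|V(H)|=|V_{1*}|=|V_{*1}|$ and $\mathcal{P}$ is a $(4,1/3,\gamma)$-partition with $V_{1*}\cup V_{2*}=V(G)$, we have $n/3 \le |V(H)| \le 2n/3$. By the definition of $\mathcal{J}^1$, for every $v \in V(H)$ the outdegree $d_H^+(v)$ equals the number of outneighbours of $\phi_{1*}(v)$ lying in $V_{*1}$; since $\phi_{1*}(v)\in V_{1*}$ and the good edges leaving $V_{1*}$ land in $V_{*1}$, this is exactly $d_{\mathcal{G}_2(\mathcal{P},G)}^+(\phi_{1*}(v))$. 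Analogously, $d_H^-(v) = d_{\mathcal{G}_2(\mathcal{P},G)}^-(\phi_{*1}(v))$. Hypothesis~(i) then tells me that, outside a set of at most $2\rho n \le 6\rho |V(H)|$ vertices, both $d_H^+(v)$ and $d_H^-(v)$ are at least $(1/3+\eps)n \ge (1/2+\eps/2)|V(H)|$, using $|V(H)|\le 2n/3$. Hypothesis~(ii) gives $\delta^0(H) \ge n/20 \ge |V(H)|/20$.

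Finally, these bounds place $H$ within the scope of Corollary~\ref{cor:Higher-Than-Half-For-All-But-Few} applied with parameters $\rho' = 6\rho$, $\eps' = \eps/2$, and $\alpha = 1/20$. The corollary then produces a Hamilton cycle in $H$, which via Proposition~\ref{prop:identification-Hamilton-implies-Hamilton} lifts to one in $G$. I do not expect a serious obstacle here; the argument is essentially a packaging of previously established results. The only mildly delicate point is keeping the parameter hierarchy consistent, but the given hypothesis $\gamma,\rho \ll \eps$ is amply strong enough to pick auxiliary constants $\nu, \tau'$ satisfying $1/|V(H)|,\nu, 6\rho \ll \tau' \ll \eps/2 \ll 1/20$ as required by the corollary.
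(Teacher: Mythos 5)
Your proposal is correct and follows essentially the same route as the paper's proof: reduce via Proposition~\ref{prop:identification-Hamilton-implies-Hamilton} to showing the identifications $\mathcal{J}^i(\mathcal{P},G,\phi^i)$ are Hamiltonian, relate their degrees to degrees in $\mathcal{G}_2(\mathcal{P},G)$, use $n/3 \le |V(\mathcal{J}^i)| \le 2n/3$ to convert the bounds in~(i) and~(ii) into the hypotheses of Corollary~\ref{cor:Higher-Than-Half-For-All-But-Few}, and conclude. The only cosmetic difference is that you treat $i=1$ and appeal to symmetry while the paper handles both indices simultaneously, and your constant $6\rho$ (rather than the paper's $3\rho$) for the exceptional set is the more careful count; neither affects the argument.
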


\begin{proof}
Let $|V_{12}|=|V_{21}|=t$. For $i \in [2]$, let $\phi^i$ be a proper $i$-pair with respect to $\mathcal{P}$. Let $J_i:=\mathcal{J}^{i}(\mathcal{P},G,\phi^i)$. Since $|V(J_i)|=|V_{i*}|\geq n/3$ (the inequality holds because $\mathcal{P}$ is a $(4, 1/3, \gamma)$-partition), we obtain $n/3\leq |J_i|\leq 2n/3$. On the other hand, for any $v\in V_{ii}$, we have $d_{J_i}^{+}(v)=d_{\mathcal{G}_2(\mathcal{P},G)}^{+}(v)$ and $d_{J_i}^{-}(v)=d_{\mathcal{G}_2(\mathcal{P},G)}^{-}(v)$. Similarly, for any $r\in[t]$, we have $d_{J_i}^{+}(r)=d_{\mathcal{G}_2(\mathcal{P},G)}^{+}(\phi_{i*}(r))$ and $d_{J_i}^{-}(r)=d_{\mathcal{G}_2(\mathcal{P},G)}^{-}(\phi_{*i}(r))$. Then, $d_{J_i}^{+}(x),d_{J_i}^{-}(x)\geq (1/2+\eps)|J_i|$ holds for all but at most $3\rho|J_i|$ vertices $x$ in $J_i$ by (i). Moreover, (ii) implies $\delta^{0}(J_i)\geq |J_i|/20$. Therefore, $J_i$ is Hamiltonian for $i \in [2]$ by Corollary~\ref{cor:Higher-Than-Half-For-All-But-Few}. Hence, the result follows from Proposition~\ref{prop:identification-Hamilton-implies-Hamilton}.	 
\end{proof}

\no We end this section by showing that every regular oriented graph of sufficiently high degree that admits a $(9,\tau,\gamma)$-partition is Hamiltonian.

	\begin{lemma}\label{lem:9partitionHamiltonian}
		Let $1/n <\gamma\ll \tau\ll\varepsilon<1$ be constants. Then every $d$-regular oriented graph~$G$ on $n$ vertices with $d\geq (1/4+\eps)n$ and that admits a $(9,\tau,\gamma)$-partition is Hamiltonian. 
	\end{lemma}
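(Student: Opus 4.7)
The plan is to reduce Hamiltonicity of $G$ to Hamiltonicity of progressively denser almost-regular digraphs via two successive applications of Proposition~\ref{prop:identification-Hamilton-implies-Hamilton}. The $9$-partition framework is precisely what is needed because a single path system from Lemma~\ref{lem:9-partition-path-selection} balances both the outer and inner $4$-partitions simultaneously. First I would take $\mathcal{P}_3=\{V_{ij}\}$ to be an extremal $(9,\tau,\gamma)$-partition of $G$, apply Lemma~\ref{lem:9-partition-path-selection} to obtain a path system $\mathcal{Q}\subseteq\mathcal{B}_3(\mathcal{P}_3,G)$ with $e(\mathcal{Q})\le 2\gamma n/\alpha$ satisfying condition~(ii), and contract $\mathcal{Q}$ with respect to $\mathcal{P}_3$ to produce a digraph $G'$ and partition $\mathcal{P}_3'=\{V'_{ij}\}$. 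By Proposition~\ref{prop:after-contraction} and Lemma~\ref{lem:BALANCING-PARTITION}, $\mathcal{P}_3'$ is a $(9,\tau/2,2\gamma)$-partition of $G'$ with $|V'_{i*}|=|V'_{*i}|$ for every $i\in[3]$; moreover all but $O(\gamma n/\alpha)$ vertices of $G'$ still have in- and out-degree exactly $d$, and the argument of Proposition~\ref{prop:dense-regular-size} yields $d-\varepsilon n/2\le|V'_{i*}|\le(1/2-\varepsilon)n$. By Proposition~\ref{prop:contract-hamilton}, it suffices to prove that $G'$ is Hamiltonian.

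I would then choose $i_0\in[3]$ with $|V'_{j_1 j_2}|+|V'_{j_2 j_1}|>0$ where $\{j_1,j_2\}=[3]\setminus\{i_0\}$; such $i_0$ exists because Proposition~\ref{prop:quarterleadsnonempty}, applied to $G$ and propagated through the contraction, forbids all six off-diagonal blocks $V'_{ij}$ with $i\neq j$ from simultaneously vanishing. Grouping $[3]=\{i_0\}\cup\{j_1,j_2\}$ and using Proposition~\ref{prop:partition-9-to-4} yields an outer $4$-partition $W$ of $G'$, and balancedness of $\mathcal{P}_3'$ gives $|W_{12}|=|W_{21}|\ge\tau n/2>0$. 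Applying Proposition~\ref{prop:identification-Hamilton-implies-Hamilton}, it suffices to show that $J_i:=\mathcal{J}^i(W,G',\phi^i)$ is Hamiltonian for every proper $i$-pair $\phi^i$. For $J_1$ this follows from Corollary~\ref{cor:Higher-Than-Half-For-All-But-Few}: $|V(J_1)|=|V'_{i_0*}|\le(1/2-\varepsilon)n$, the inclusion $\mathcal{G}_3(\mathcal{P}_3',G')\subseteq\mathcal{G}_2(W,G')$ together with Proposition~\ref{prop:degree-property-extremal} gives $\delta^0(J_1)\ge n/20$, and all but $O(\sqrt{\gamma}n)$ vertices of $J_1$ have in- and out-degree at least $d-O(\sqrt{\gamma}n)\ge(1/2+\varepsilon/2)|V(J_1)|$.

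The vertex set of $J_2$ can have size up to $(3/4-\varepsilon/2)n$, too large for a direct application of Corollary~\ref{cor:Higher-Than-Half-For-All-But-Few}, so I would iterate the construction. For each proper $2$-pair $\phi^2$, define an inner $4$-partition $Z$ of $V(J_2)=[t]\cup W_{22}$ by placing $v\in V'_{ij}$ (for $i,j\in\{j_1,j_2\}$) according to $(i,j)$, and placing each identified vertex $r\in[t]$ according to the pair $(a,b)$ determined by $\phi_{2*}(r)\in V'_{a,i_0}$ and $\phi_{*2}(r)\in V'_{i_0,b}$, after relabelling $\{j_1,j_2\}\to[2]$. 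A direct count using $|V'_{i*}|=|V'_{*i}|$ shows $|Z_{1*}|=|V'_{j_1*}|$, $|Z_{*1}|=|V'_{*j_1}|$, and $|Z_{12}|=|Z_{21}|\ge\max(|V'_{j_1 j_2}|,|V'_{j_2 j_1}|)>0$ for every $\phi^2$. A second application of Proposition~\ref{prop:identification-Hamilton-implies-Hamilton}, now to $J_2$ with partition $Z$, reduces the problem to showing that $K_i:=\mathcal{J}^i(Z,J_2,\psi^i)$ is Hamiltonian for every proper $i$-pair $\psi^i$. Since $|V(K_i)|=|V'_{j_i*}|\le(1/2-\varepsilon)n$ and good edges of $Z$ in $J_2$ correspond to $9$-good edges of $G'$ (namely those from $V'_{j_i*}$ to $V'_{*j_i}$), the same degree argument as for $J_1$ applies, so each $K_i$ is Hamiltonian by Corollary~\ref{cor:Higher-Than-Half-For-All-But-Few}.

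The main obstacle I expect is the degree bookkeeping across the contraction and the two successive identifications: one must verify, uniformly in the choice of pairings, that all but a negligible number of vertices in each of $J_1$, $K_1$, $K_2$ retain in- and out-degree close to $d$, and one must justify the almost-regular analogues of Propositions~\ref{prop:dense-regular-size},~\ref{prop:degree-property-extremal}, and~\ref{prop:quarterleadsnonempty} when passing from the $d$-regular $G$ to $G'$. All of these go through with slight adjustment of constants using the bounds $e(\mathcal{Q})\le 2\gamma n/\alpha$ and $|\mathcal{B}_3(\mathcal{P}_3',G')|\le 2\gamma n^2$.
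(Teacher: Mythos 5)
Your proposal is correct and follows essentially the same route as the paper: contract a balancing path system from Lemma~\ref{lem:9-partition-path-selection}, group the resulting balanced $9$-partition into an outer $4$-partition, apply Proposition~\ref{prop:identification-Hamilton-implies-Hamilton}, handle $J_1$ directly via Corollary~\ref{cor:Higher-Than-Half-For-All-But-Few}, and iterate on $J_2$ with an inner $4$-partition (which the paper packages as Lemma~\ref{lem:digraph-hamilton}, while you inline the same calculation). The one genuine, if minor, streamlining in your version is that you derive the upper bound $|V'_{i*}|\le(1/2-\eps)n$ for all $i\in[3]$ directly from the lower bounds of Proposition~\ref{prop:dense-regular-size} (since the three parts $V'_{1*},V'_{2*},V'_{3*}$ partition $V(G')$), which gives $|J_1|\le(1/2-\eps)n$ automatically for any choice of $i_0$; the paper instead sets up and resolves the explicit case analysis (a)/(b)/(c) together with (a$'$)/(b$'$)/(c$'$) to pick a grouping with $|W_{11}|\le|W_{22}|$. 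Your choice of $i_0$ then only needs to ensure one off-diagonal pair $V'_{j_1j_2}\cup V'_{j_2j_1}$ is nonempty, which as you note follows from Proposition~\ref{prop:quarterleadsnonempty} propagated through the small contraction.
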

	
	\begin{proof}
		Let $\mathcal{P}=\{V_{ij}:i,j \in [3]\}$ be an extremal $(9,\tau,\gamma)$-partition of $G$. 
		Firstly, we claim at least two of the following are true:
		\begin{itemize}
		\item[(a)] $\tau n+|V_{11}|\leq |V_{22}|+|V_{33}|+|V_{23}|+|V_{32}|$,
		\item[(b)] $\tau n+|V_{22}|\leq |V_{33}|+|V_{11}|+|V_{31}|+|V_{13}|$,
		\item[(c)] $\tau n+|V_{33}|\leq |V_{11}|+|V_{22}|+|V_{12}|+|V_{21}|$.
		\end{itemize}
	\no If not, then without loss of generality, say (a) and (b) are false. By adding up those inequalities, we obtain $2\tau n > 2|V_{33}|+|V_{31}\cup V_{32}|+|V_{13}\cup V_{23}|$. However, by Proposition~\ref{prop:quarterleadsnonempty}, we know $|V_{31}\cup V_{32}|,|V_{13}\cup V_{23}|\geq  \tau n$,  so we have a contradiction. Similarly, it can be easily shown that at least two of the following are true:
	\begin{align*}
		\text{(a$'$) } \tau n &\leq |V_{12}|+|V_{21}|,&
		\text{(b$'$) } \tau n & \leq |V_{13}|+|V_{31}|,&
		\text{(c$'$) } \tau n &\leq |V_{23}|+|V_{32}|.
	\end{align*}
Thus, without loss of generality, we can assume that (c) and (a$'$) hold, that is, 
\begin{align}
\label{eq:WLOG}    
\tau n +|V_{33}|\leq |V_{11}|+|V_{22}|+|V_{12}|+|V_{21}|\text{ and } \tau n \leq |V_{12}|+|V_{21}|.
\end{align}
\no By Lemma~\ref{lem:9-partition-path-selection}, there exists a path system $\mathcal{Q}$ in $\mathcal{B}_3(\mathcal{P},G)$ containing at most $8\gamma n$ edges such that $\sum_{j\neq i}a_{ij} - \sum_{j\neq i}a_{ji}=|V_{i*}|-|V_{*i}|$ for all $i \in [3]$, where $a_{ij} = |E(V_{i*}, V_{*j}) \cap \mathcal{Q}|$. 
We contract $\mathcal{Q}$ with respect to $\mathcal{P}$ and write $G'$ for the resulting graph and $\mathcal{P}'=\{V_{ij}':i,j \in [3]\}$ for the resulting partition. By Proposition~\ref{prop:dense-regular-size}, $\mathcal{P}$ is actually a $(9, 1/4+\eps/2 ,\gamma)$-partition of~$G$, so Proposition~\ref{prop:after-contraction} implies that $\mathcal{P}'$ is a $(9,1/8,2\gamma)$-partition for~$G'$. By Lemma~\ref{lem:BALANCING-PARTITION}, we have 
  \begin{align}
  \label{eqn:balpart}
  |V_{i*}'|=|V_{*i}'| \ge |V_{*i}| - |V(\mathcal{Q})|  \ge n/4 \text{ for all }i \in [3].    
  \end{align}
  Moreover, by using Proposition~\ref{prop:quarterleadsnonempty}, we have \begin{align}
\label{eq:newpartition1}
 \sum_{j\neq i}|V_{ij}'|=\sum_{j\neq i}|V_{ji}'|\geq \tau n \,\text{ for all }i \in [3].
\end{align}
Also, using~\eqref{eq:WLOG} and the facts that $\gamma \ll \tau$ and $ e(\mathcal{Q}) \le 8\gamma n$, we have \begin{align}
\label{eq:newpartition2}
|V_{33}'|\leq |V_{11}'|+|V_{22}'|+|V_{12}'|+|V_{21}'|\,\text{ and }\, |V_{12}'|+|V_{21}'| \geq  \tau n /2.
\end{align}
Since $| V ( \mathcal{Q} ) |\le 16\gamma n$, we have
\begin{align}
\label{eq:degreeG'}
\delta^0(G')\geq d-16\gamma n \geq \left( 1/4 + {\varepsilon}/{2} \right)n.
\end{align}
Similarly, by Proposition~\ref{prop:degree-property-extremal},
\begin{align}
\label{eq:degreee}
\text{for any }  v\in V(G')\text{, if } v \in  V_{ab}' \text{ for some }   a,b \in [3], \text{ then } d_{V_{*a}'}^{+}(v), d_{V_{b*}'}^{-}(v)\geq d/3-16\gamma n. \footnotemark
\end{align}
\no\footnotetext{It is clear that all the vertices of $G'$ inherited from $G$ satisfy these degree conditions; for the new vertices in $G'$ (created from contracting paths), one can easily check in the definition of contraction that the vertices are placed in such a way that the degree conditions hold.}In other words, we have 
\begin{align}
d_{\mathcal{G}_3(\mathcal{P}',G')}^{+}(v),d_{\mathcal{G}_3(\mathcal{P}',G')}^{-}(v)\geq d/3-16\gamma n. \label{eq:G'-min-good-degree}    
\end{align}

\no
Let 
\begin{align*}
W_{11}&=V_{33}', &
W_{12}&=V_{32}'\cup V_{31}', &
W_{21}&=V_{23}'\cup V_{13}', &
W_{22}&=V_{11}'\cup V_{22}'\cup V_{12}'\cup V_{21}'.
\end{align*}
By Proposition~\ref{prop:partition-9-to-4}, we have $\mathcal{W}=\{W_{ij}:i,j \in [2]\}$ is a $(4,1/8,2\gamma)$-partition for $G'$. Furthermore, \eqref{eq:newpartition2} and \eqref{eq:newpartition1} imply that 
\begin{align*}
    |W_{11}|\leq |W_{22}| \text{ and }|W_{12}|=|W_{21}|\geq \tau n/2.
\end{align*}

\no By Proposition~\ref{prop:contract-hamilton}, if $G'$ is Hamiltonian then so is $G$. For $i \in [2]$, let $\phi^i$ be a proper $i$-pair with respect to $\mathcal{W}$. 
In order to prove the lemma, 
it is enough to show that $J_i:=\mathcal{J}^{i}(\mathcal{W},G',\phi^i)$ is Hamiltonian for $i \in [2]$ by Proposition~\ref{prop:identification-Hamilton-implies-Hamilton}.  \\

\no
First, for $J_1$, \eqref{eqn:balpart} and the fact that $|W_{11}|\leq |W_{22}|$ imply that
\begin{align}
    \label{eqn:|V_{3*}|}
    n/4 \le |V_{3*}'|= |J_1| \leq |G'|/2\leq n/2.
\end{align}
Let $B^{+}(J_1)$ be the set of vertices in $J_1$ satisfying $d_{J_1}^{+}(x)<(1/2+\eps/2)|J_1|$. 
Similarly, define $B^{-}(J_1)$.
For any vertex $x\in V(J_1)$, we have $\phi_{1*}(x)\in V_{3*}'$ and $d_{J_1}^{+}(x)=d_{V_{*3}'}^{+}(\phi_{1*}(x))$.
Together with~\eqref{eq:degreeG'} and \eqref{eqn:|V_{3*}|}, we deduce that
\begin{align*}
    2 \gamma n^2 &\ge |\mathcal{B}_2(\mathcal{W},G')| \ge e( \phi_{1*}(B^{+}(J_1)) , V(G') \setminus V_{*3}')
    \ge \sum_{x \in B^{+}(J_1)} \left( d_{G'}^{+}(\phi_{1*}(x)) -  d_{V_{*3}'}^{+}(\phi_{1*}(x)) \right)
        \\
    & \ge \sum_{x \in B^{+}(J_1)} \left( \delta^0(G') -  d_{J_1}^{+}(x) \right)
    \ge \sum_{x \in B^+(J_1)} \left( (1/4 + \eps / 2)n - (1/2 + \eps/2)|J_1| \right)
    \\
    &\ge |B^{+}(J_1)| \eps n/4.
\end{align*}
So $|B^{+}(J_1)| \le 8  \gamma n/\eps$ and, similarly, $|B^{-}(J_1)| \le  8  \gamma n/\eps$.
By~\eqref{eqn:|V_{3*}|},
\begin{align*}
     |B^{+}(J_1)|+|B^{-}(J_1)| & \leq 16\gamma n/\eps \le 64 \gamma |J_1| / \eps \le \sqrt{\gamma}|J_1|.
\end{align*}
Thus $d_{J_1}^{+}(x),d_{J_1}^{-}(x)\geq(1/2+\eps/2)|J_1|$ holds for all but at most $\sqrt{\gamma}|J_1|$ vertices. Also, by \eqref{eq:degreee}, we have $\delta^0 (J_1)
\geq d/3-16\gamma n \geq |J_1|/10$. Therefore, by Corollary~\ref{cor:Higher-Than-Half-For-All-But-Few}, $J_1$ is Hamiltonian.\\

\no For $J_2$, we first show that $J_2$ has a $(4,1/3,8\gamma)$-partition. 
By~\eqref{eqn:|V_{3*}|}
\begin{align}
\label{eqn:|J_2|}
n/2 \le |J_2|=|G'|-|V_{3*}'| \le 3n/4,
\end{align}
so \eqref{eqn:balpart} implies that
\begin{align*}
 |V'_{*i}| =|V_{i*}'| \ge n/4 \ge |J_2|/3 \text{ for } i \in [2].
\end{align*}
Let $t:=|W_{12}|=|W_{21}|$. Recall that $\phi_{2*}:[t]\cup W_{22}\to W_{2*}$ and $\phi_{*2}:[t]\cup W_{22}\to W_{*2}$ are bijections satisfying $\phi_{2*}(x)=\phi_{*2}(x)=x$ for all $x\in W_{22}$, so we have $\phi_{2*}(q)\in V_{32}'\cup V_{31}'$ and $\phi_{*2}(q)\in V_{23}'\cup V_{13}'$ for any $q\in[t]$ since $W_{12}=V_{32}'\cup V_{31}'$ and $W_{21}=V_{23}'\cup V_{13}'$.
Then, we partition $[t]$ into parts $\{T_{ij}:i,j \in [2]\}$ as follows:
\begin{align*}
T_{11}&=\{q \in [t]:\phi_{2*}(q)\in V_{13}', \;\phi_{*2}(q)\in V_{31}'\}, &  T_{12}&=\{q \in [t]:\phi_{2*}(q)\in V_{13}', \; \phi_{*2}(q)\in V_{32}'\},\\
 T_{21}&=\{q \in [t]:\phi_{2*}(q)\in V_{23}', \; \phi_{*2}(q)\in V_{31}'\}, &
 T_{22}&=\{q\in [t]:\phi_{2*}(q)\in V_{23}', \; \phi_{*2}(q)\in V_{32}'\}.
\end{align*}
Then, let us write $Z_{ij}=V_{ij}'\cup T_{ij}$ for $i,j \in [2]$, and $\mathcal{Z}=\{Z_{ij}:i,j \in [2]\}$.
Notice that $\mathcal{Z}$ is a partition of $V(J_2)$.
By using $|T_{11}|+|T_{12}|=|V_{13}'|$, we deduce that $|Z_{1*}|=|V_{1*}'|\geq |J_2|/3$. 
More generally, for $i \in \{1,2\}$
\begin{align*}
|Z_{i*}| =  |Z_{*i}| = |V'_{i*}| \geq |J_2|/3.
\end{align*}
Note that $ Z_{12}\cup Z_{21} \supseteq V_{12}'\cup V_{21}'  \ne \emptyset$ by~\eqref{eq:newpartition2}.
We deduce that $|Z_{12}|=|Z_{21}|>0$.
On the other hand, for $i\in\{1,2\}$, we have $Z_{i*}=V_{i1}'\cup V_{i2}'\cup \{q\in[t]:\phi_{2*}(q)\in V_{i3}' \}$ and $Z_{*i}=V_{1i}'\cup V_{2i}'\cup \{q\in[t]:\phi_{*2}(q)\in V_{3i}' \}$. Since $\phi_{2*}(x)=\phi_{*2}(x)=x$ for all $x\in V_{11}'\cup V_{12}'\cup V_{21}'\cup V_{22}'$, we see that $\phi_{2*}(V_{i1}'\cup V_{i2}')=V_{i1}'\cup V_{i2}'$ and $\phi_{*2}(V_{1i}'\cup V_{2i}')=V_{1i}'\cup V_{2i}'$. Therefore,
\begin{align}
\label{eq:GJ-correspondence}
xy\in E(Z_{i*},Z_{*j}) \:\:\text{ if and only if }\:\: \phi_{2*}(x)\phi_{*2}(y)\in E(V_{i*}',V_{*j}') \:\:\text{ for all } i,j \in [2].
\end{align} 
Then, we have 
\begin{align}
\label{eqn:e(Z_i*,Z*j)}
e(Z_{i*},Z_{*j})=e(V_{i*}',V_{*j}') \text{ for }i,j \in [2].
\end{align}
Hence, we obtain $$|\mathcal{B}_2(\mathcal{Z},J_2)|=e(V_{1*}',V_{*2}')+e(V_{2*}',V_{*1}')\leq  |\mathcal{B}_3(\mathcal{P}',G')| \leq  2\gamma |G'|^2\leq 8\gamma |J_2|^2.$$ As a result, $\mathcal{Z}$ is a $(4,1/3,8\gamma)$-partition for $J_2$ with $|Z_{12}|=|Z_{21}|>0$.\\

\noindent Let $B^{+}(J_2)$ be the set of vertices in $J_2$ satisfying $d_{\mathcal{G}_2(\mathcal{Z},J_2)}^{+}(x)<(1/3+\eps/3)|J_2|$. Similarly, define $B^{-}(J_2)$. Note that $d_{\mathcal{G}_2(\mathcal{Z},J_2)}^{+}(x)= d_{\mathcal{G}_3(\mathcal{P}',G')}^{+}(\phi_{2*}(x))$ for any vertex $x\in V(J_2)$ by \eqref{eq:GJ-correspondence}. Moreover, by~\eqref{eq:degreeG'} and \eqref{eqn:|J_2|}, we have $\delta^{0}(G')\geq (1/3+\eps/2)|J_2|$. Hence, by \eqref{eqn:|J_2|}, for any vertex $x\in B^{+}(J_2)$, we obtain
\begin{align*}
d_{\mathcal{B}_3(\mathcal{P}',G')}^{+}(\phi_{2*}(x))> (1/3+\eps/2)|J_2|- (1/3+\eps/3)|J_2|\geq \eps|J_2|/6\geq \eps n/12. \end{align*}
\noindent Since $|\mathcal{B}_3(\mathcal{P}',G')|\leq |\mathcal{B}_3(\mathcal{P},G)| \leq \gamma n^2$, we find $\gamma n^2\geq |B^{+}(J_2)| \eps n/12$. So $|B^{+}(J_2)|\leq 12\gamma n /\eps$ and, similarly, $|B^{-}(J_2)|\leq 12\gamma n/\eps $. As a result, by \eqref{eqn:|J_2|}, we have
\begin{align*}
|B^{+}(J_2)|+|B^{-}(J_2)|\leq 24\gamma n/\eps\leq 48\gamma |J_2|/\eps \leq \sqrt{\gamma}|J_2|. 
\end{align*}

\noindent On the other hand, by \eqref{eq:G'-min-good-degree} and \eqref{eqn:|J_2|}, for any vertex $x\in V(J_2)$, we obtain \begin{align*}
d_{\mathcal{G}_2(\mathcal{Z},J_2)}^{+}(x)=d_{\mathcal{G}_3(\mathcal{P}',G')}^{+}(\phi_{2*}(x))\geq d/3-16\gamma n\geq |J_2|/20.
\end{align*}
Similarly, we have $d_{\mathcal{G}_2(\mathcal{Z},J_2)}^{-}(x)\geq |J_2|/20$. As a result, the partition $\mathcal{Z}$ for the digraph $J_2$ satisfies all the conditions of Lemma~\ref{lem:digraph-hamilton}, so we are done.\end{proof}

\section{Proofs of main results}\label{ch:mainproofs}

In this section, we give the proofs of Theorems~\ref{thm:MAIN-1-3-CASE} and~\ref{thm:MAIN-1-4-CASE}. 

\begin{proof}[Proof of Theorem~\ref{thm:MAIN-1-3-CASE}]
Let $\eps>0$ be a constant. Let $G$ be a strongly well-connected $d$-regular digraph on $n$ (sufficiently large) vertices with $d\geq (1/3+\eps)n$. We will show that $G$ is Hamiltonian.
Let $\nu$ and $\tau$ be constants satisfying $1/n \ll \nu\ll\tau\ll \eps$.
If $G$ is a robust $(\nu, \tau)$-outexpander, then we are done by Theorem~\ref{thm:RobustExpanderImpliesHamilton}. Assume not. Then, $G$ admits a $(4,\tau,4\nu)$-partition by Lemma~\ref{lem:structure_not_expander}. Let $\mathcal{P}=\{V_{ij}:i,j \in [2]\}$ be an extremal $(4,\tau,4\nu)$-partition for $G$. Notice that $|V_{1*}|,|V_{2*}|\geq (1/3+\eps/2)n$ by Proposition~\ref{prop:dense-regular-size}. Without loss of generality, assume $|V_{12}|\geq |V_{21}|$. We will choose a path system $\mathcal{Q}$ in $\mathcal{B}_2(\mathcal{P},G)$ satisfying 
\[
|E(\mathcal{Q})\cap E(V_{1*},V_{*2})|-|E(\mathcal{Q})\cap E(V_{2*},V_{*1})|=|V_{12}|-|V_{21}|
\]
as follows.
\begin{itemize}
\item[(i)] If $V_{12}=V_{21}=\emptyset$, then $|V_{11}|,|V_{22}|\geq (1/3+\eps/2)n$. Since $G$ is strongly well-connected, we can find disjoint edges $ab\in E(V_{11},V_{22})$ and $cd\in E(V_{22},V_{11})$. Then, set $\mathcal{Q}=\{ab,cd\}$.
\item[(ii)] If $|V_{12}|\geq |V_{21}|>0$, then  $d(|V_{12}|-|V_{21}|)=e(V_{1*},V_{*2})-e(V_{2*},V_{*1})$ by Proposition~\ref{prop:partition-regular-graphs}. Hence, we have $e(V_{1*},V_{*2})\geq d(|V_{12}|-|V_{21}|)$. By Proposition~\ref{prop:degree-property-extremal}, $E(V_{1*},V_{*2})$ induces a subgraph $H$ in $G$ with $\Delta^{0}(H)\leq d/2$. Since $e(V_{1*},V_{*2})\leq 4\nu n^2$, by Lemma~\ref{lem:CYCLE_FREE}, we can find a path system $\mathcal{Q}'$ in $H$ with $e(\mathcal{Q}')\geq 2e(V_{1*},V_{*2})/d\geq 2(|V_{12}|-|V_{21}|)$. Then, we remove all but exactly $|V_{12}|-|V_{21}|$ edges in $\mathcal{Q}'$ to obtain $\mathcal{Q}$.     

\item[(iii)] If $|V_{12}|\geq 2$ and $|V_{21}|=0$, then as with the previous case, $E(V_{1*},V_{*2})$ has a path system $\mathcal{Q}'$ containing $2|V_{12}|$ edges. We claim $\mathcal{Q}'$ has at least one path that starts in $V_{11}$ and ends in $V_{22}$. If not, then any path in $\mathcal{Q}'$, with $s$ edges say, is incident to at least $s$ vertices in $V_{12}$, but since $\mathcal{Q}'$ contains more than $|V_{12}|$ edges, we have a contradiction. 
Next we claim that any path in $\mathcal{Q}'$ from $V_{11}$ to $V_{22}$ has at most $|V_{12}|$ edges. Indeed, if not, then $\mathcal{Q}'$ has a unique path which has $|V_{12}| + 1$ edges. But then $\mathcal{Q}'$ has $2|V_{12}| = |V_{12}| + 1$ edges, contradicting $|V_{12}| \geq 2$.
 Using the claims, we can remove all but exactly $|V_{12}|$ edges in $\mathcal{Q}'$ to obtain a path system $\mathcal{Q}$ with exactly $|V_{12}| = |V_{12}| - |V_{21}|$ edges and where at least one path starts in $|V_{11}|$ and ends in $|V_{22}|$. 
\item[(iv)] If $|V_{12}|=1$ and $|V_{21}|=0$, let $x$ be the unique vertex in $V_{12}$. By Proposition~\ref{prop:partition-regular-graphs}, we have $d=d_{V_{11}}^{-}(x)+d_{V_{22}}^{+}(x)+e(V_{11},V_{22})-e(V_{22},V_{11})$. Note that, by Proposition~\ref{prop:degree-property-extremal}, we know $d_{V_{11}}^{-}(x),d_{V_{22}}^{+}(x)\leq d/2$. If $d_{V_{11}}^{-}(x)=d_{V_{22}}^{+}(x)=d/2$, then we obtain another extremal $(4,\tau,4\nu)$-partition by moving $x$ into $V_{11}$, which results in case (i). If we have either $d_{V_{11}}^{-}(x)<d/2$ or $d_{V_{22}}^{+}(x)<d/2$, then we have $e(V_{11},V_{22})\geq 1$. We can take an arbitrary edge $ab\in E(V_{11},V_{22})$, and set $\mathcal{Q}=\{ab\}$.
\end{itemize}

\no Now we contract this path system $\mathcal{Q}$ in $G$ with respect to partition $\mathcal{P}$ to obtain a graph $G'$ with resulting partition $\mathcal{P}'=\{V_{ij}':i,j \in [2]\}$.
By Lemma~\ref{lem:BALANCING-PARTITION}, we have $|V_{12}'|=|V_{21}'|$. 
Moreover, the choice of $\mathcal{Q}$ ensures that both $V_{12}'$ and $V_{21}'$ are nonempty as follows: In cases (i), (iii), and (iv) we include at least one path from $V_{11}$ to $V_{22}$ so that the vertex created when contracting this path is placed in $V'_{21}$; see Definition~\ref{def:path-contraction}. In case (ii), $V_{21}$ is nonempty and we do not use any vertices from $V_{21}$ in the path system. Therefore, $V_{12}'$ is nonempty after the contraction, which also means that $V_{12}'$ is nonempty as $|V_{12}'|=|V_{21}'|$.
We note that $\mathcal{Q}$ has at most $12\nu n$ edges since, by construction, $\mathcal{Q}$ has at most $||V_{12}| - |V_{21}||$ edges (except in case (i) where $\mathcal{Q}$ has two edges) and $|V_{12}|-|V_{21}|\leq 12\nu n$ by Corollary~\ref{cor:Y-and-Z-equal-size}. Therefore, we delete at most $24\nu n$ vertices, which implies $\delta^{0}(G')\geq d-24\nu n$. On the other hand, by Proposition~\ref{prop:after-contraction}, we have that $\mathcal{P}'$ is a $(4,\tau/2,8\nu)$-partition.  Also, by Proposition~\ref{prop:dense-regular-size}, we have 
\begin{align*}
|V_{i*}'|\geq |V_{i*}|-24\nu n\geq (1/3+\eps-24\nu)n\geq |G'|/3
\end{align*}
for $i \in [2]$. Similarly, we obtain $|V_{*i}'|\geq |G'|/3$, so $\mathcal{P}'$ is a $(4,1/3,8\nu)$-partition. Let $B^{+}(G')$ be the set of vertices in $G'$ satisfying $d_{\mathcal{G}_2(\mathcal{P}',G')}^{+}(x)<(1/3+\eps/3)|G'|$. Similarly, define $B^{-}(G')$. Note that $\delta^{0}(G')\geq d-24\nu n\geq (1/3+\eps/2)|G'|$. Hence, for any vertex $x\in B^{+}(G')$, we obtain
\begin{align*}
d_{\mathcal{B}_2(\mathcal{P}',G')}^{+}(x)>(1/3+\eps/2)|G'|-(1/3+\eps/3)|G'|\geq \eps |G'|/6.    
\end{align*}
Since $|\mathcal{B}_2(\mathcal{P}',G')|\leq 8\nu |G'|^2$, we have $8\gamma |G'|^2\geq |B^{+}(G')|\cdot \eps |G'|/6$. So, $|B^{+}(G')|\leq 48\nu |G'|/\eps$ and, similarly, $|B^{-}(G')|\leq 48\nu |G'|/\eps$. As a result, we obtain
\begin{align*}
|B^{+}(G')|+|B^{-}(G')|\leq 96\nu |G'|/\eps\leq \sqrt{\nu}|G'|.    
\end{align*}

\noindent Moreover, since $\mathcal{P}$ is an extremal $(4,\tau,4\nu)$-partition and we deleted at most $24\nu n$ vertices, Proposition~\ref{prop:degree-property-extremal} implies that $d_{\mathcal{G}_2(\mathcal{P}',G')}^{+}(v),d_{\mathcal{G}_2(\mathcal{P}',G')}^{-}(v)\geq d/2-24\nu n\geq |G'|/20$ for all $v\in V(G')$. As a result, $\mathcal{P}'$ satisfies the properties in Lemma~\ref{lem:digraph-hamilton}, so $G'$ is Hamiltonian. Hence, the result follows by Proposition~\ref{prop:contract-hamilton}.
\end{proof}

\begin{proof}[Proof of Theorem~\ref{thm:MAIN-1-4-CASE}]
Let $\eps>0$ be a constant. Let $G$ be a $d$-regular oriented graph on $n$ (sufficiently large) vertices with $d\geq (1/4+\eps)n$. We will show that $G$ is Hamiltonian.
Fix constants $\nu$ and $\tau$ satisfying $1/n \ll \nu\ll\tau\ll \eps$. 
By Theorem~\ref{thm:RobustExpanderImpliesHamilton}, we are done if $G$ is a robust $(\nu, \tau)$-outexpander. Assume not. Then, by Lemma~\ref{lem:structure_not_expander}, $G$ admits an extremal $(4,\tau,4\nu)$-partition~$\mathcal{P}=\{V_{ij}:i,j \in [2]\}$.
Then, for each $i \in [2]$, 
\begin{align}
        |V_{i*}|,|V_{*i}|\geq (1/4+\eps/2)n \label{eqn:123}
\end{align}
by Proposition~\ref{prop:dense-regular-size}. Also, we have $|V_{12}|,|V_{21}|\geq \tau n$ by Proposition~\ref{prop:quarterleadsnonempty}. 
Without loss of generality, assume $|V_{11}|\leq |V_{22}|$. 
Furthermore, by reversing the edges if necessary, we may assume that $|V_{12}| \ge |V_{21}|$.
Let $r= |V_{12}|-|V_{21}|$.
By Corollary~\ref{cor:Y-and-Z-equal-size} and the fact that $|\mathcal{B}_2(\mathcal{P},G)|\leq 4\nu n^2$, we obtain 
\begin{align}
\label{eqn:r}
r\leq 4\nu n^2/d\leq 16\nu n.
\end{align}
Fix a subset~$R$ of~$V_{12}$ of size~$r$.
Let $\mathcal{W}=\{W_{ij}:i,j \in [2]\}$ where $W_{ij}=V_{ij} \setminus R$ for $i,j \in [2]$.
Note that $|V_{11}|\leq |V_{22}|$ and $|V_{12}|\geq|V_{21}|$ imply that $|V_{*2}|\geq n/2$. Hence
\begin{align}
\label{eqn:|W_2*|}
    |W_{2*}|&=|W_{*2}|\geq (n-r)/2 \\
\label{eqn:|W_1*|}    
    \text{ and } |W_{1*}| &= |W_{*1}| \geq |V_{21}| \geq \tau n.
\end{align}
We now split into cases depending on whether, for all proper $2$-pairs~$\phi^2$ with respect to~$\mathcal{W}$, the digraph $\mathcal{J}^{2}(\mathcal{W},G-R,\phi^2)$ is a robust $(\nu^{1/2}, \tau)$-outexpander or not. 
\\

\noindent\underline{Case 1:} Suppose that, for all $2$-pairs~$\phi^2$ with respect to~$\mathcal{W}$, $\mathcal{J}^{2}(\mathcal{W},G-R,\phi^2)$ is a robust $(\nu^{1/2}, \tau)$-outexpander. 
Recall that $G_{ij} = G[V_{i*},V_{*j}]$. 
We have by Proposition~\ref{prop:partition-regular-graphs} that
\begin{align*}
    e(G_{12}) \ge e(G_{12})-e(G_{21}) =d(|V_{12}|-|V_{21}|)= d r.
\end{align*}
By~Proposition~\ref{prop:degree-property-extremal}, $\Delta^{0}(G_{12})\leq d/2$.
Moreover, we have $e(G_{12}) \le |\mathcal{B}_2(\mathcal{P},G)|\leq 4\nu n^2$. Hence, by Lemma~\ref{lem:CYCLE_FREE}, $G_{12}$ has a path system $\mathcal{Q}$ with $r$ edges.\\

\no We contract $\mathcal{Q}$ in~$G$ with respect to $\mathcal{P}$ to obtain $G'$ with resulting partition $\mathcal{P}'=\{V_{ij}':i,j \in [2]\}$. 
Since $|E(\mathcal{Q})\cap E(G_{12})|-|E(\mathcal{Q})\cap E(G_{21})|=|V_{12}|-|V_{21}|$, Lemma~\ref{lem:BALANCING-PARTITION} implies that $|V_{12}'|=|V_{21}'|$.
Moreover, by Proposition~\ref{prop:quarterleadsnonempty}, we have $|V_{12}|,|V_{21}|\geq \tau n$. Since $r\leq 16\nu n\ll \tau n$, we conclude that $|V_{12}'|=|V_{21}'|>0$. 
By Proposition~\ref{prop:contract-hamilton}, it is enough to show that $G'$ is Hamiltonian. \\

\no Consider a proper $i$-pair~$\psi^i$ with respect to $\mathcal{P}'$ for $i \in [2]$.
Let $\mathcal{J}_i = \mathcal{J}^{i}(\mathcal{P}',G',\psi^i)$.
To show that $G'$ is Hamiltonian, by Proposition~\ref{prop:identification-Hamilton-implies-Hamilton}, it suffices to show that $\mathcal{J}_1$ and $\mathcal{J}_2$ are Hamiltonian.
\\

\noindent
We first prove that $\mathcal{J}_2$ is a robust $(\nu^{1/2}/2,2\tau)$-outexpander by showing it is a small perturbation of $\mathcal{J}^{2}(\mathcal{W},G-R,\phi^2)$ for a suitable proper $2$-pair $\phi^2$ with respect to $\mathcal{W}$, chosen as follows. Let $t = |W_{12}|$ and $t' = |V_{12}'|$. 
Recall that $\psi^2$ is a function from $[t'] \cup V_{22}'$ to $V_{2*}' \times V_{*2}'$. Pick $\phi^2$ among all proper $2$-pairs with respect to $\mathcal{W}$ such that $|X|$ is as large as possible where $X$ is the set of $x\in ([t] \cup W_{22}) \cap ([t'] \cup V_{22}')$ satisfying $\phi^2(x)=\psi^2(x)$. We define $\mathcal{J}^2 = \mathcal{J}^{2}(\mathcal{W},G-R,\phi^2)$.
\\

\no
We have that $V(\mathcal{J}_2) = [t'] \cup V_{22}'$ and $V(\mathcal{J}^2) = [t] \cup W_{22}$ and $X \subseteq V(\mathcal{J}_2) \cap V(\mathcal{J}^2)$. Moreover $\mathcal{J}_2[X] = \mathcal{J}^2[X]$; to see this, note that for $Y:=\phi^2(X) = \psi^2(X) \subseteq V(G)$, the partitions $\mathcal{P}'$ and $\mathcal{W}$ are the same on $G'[Y] = (G-R)[Y]$.\\

\no
First note that 
\begin{align*}
    |\mathcal{J}^2|=|W_{*2}|\geq (n-r)/2.
\end{align*}

\no
Since 
\begin{align*}
    |W_{2*} \triangle V'_{2*} |, |W_{*2} \triangle V'_{*2} | \le 3e(\mathcal{Q}) + |R| = 4r ,
\end{align*}
we deduce that 
\begin{align*}
\left| ([t] \cup W_{22}) \setminus X \right|, \left| ([t'] \cup V'_{22}) \setminus X \right| \le 16r.
\end{align*}
Therefore
\begin{align*}    
    | V(\mathcal{J}_2) \triangle V(\mathcal{J}^2) | \le | ([t] \cup W_{22}) \setminus X | + | ([t'] \cup V'_{22}) \setminus X | \le 32r \le 512 \nu n \le  \nu^{1/2}  |\mathcal{J}^2|/2.
\end{align*}
Since $\mathcal{J}^2$ is a robust $(\nu^{1/2} , \tau)$-outexpander by assumption, we conclude that $\mathcal{J}_2$ is a robust $(\nu^{1/2}/2,2\tau)$-outexpander by Lemma~\ref{lem:SYMMETRIC-DIFFERENCE} (where $\mathcal{J}_2 \cup \mathcal{J}^2$ plays the role of $G$). Also, by Proposition~\ref{prop:degree-property-extremal}, we know that $d^+_{\mathcal{G}_2(\mathcal{P},G)},d^-_{\mathcal{G}_2(\mathcal{P},G)}\geq d/2$. Then, since $e(\mathcal{Q})=r$, we have $d^{+}_{\mathcal{G}_2(\mathcal{P}',G')}(v),d^{-}_{\mathcal{G}_2(\mathcal{P}',G')}(v)\geq d/2-2r$ for all $v\in V(G')$, which shows 
\[
\delta^{0}(\mathcal{J}_2)\geq d/2-2r\geq n/10 \geq |\mathcal{J}_2|/10
\]
by using $d\geq (1/4+\eps)n$, $r\leq 16\nu n$ and $\nu \ll \eps$.
Hence, $\mathcal{J}_2$ is Hamiltonian by Theorem~\ref{thm:RobustExpanderImpliesHamilton}.\\

\no
We now show that $\mathcal{J}_1$ is Hamiltonian.
By \eqref{eqn:123} and \eqref{eqn:r}, we have $ n/4 \le |V_{1*}|-2r \le |V_{1*}'|=|\mathcal{J}_1|$. Also, since $|V_{11}|\leq |V_{22}|$ and $|V_{12}|-|V_{21}|=r$, we have $|V_{1*}'|\leq |V_{1*}|+r\leq (n+r)/2+r\leq(1/2+\tau)n$ as $\nu\ll \tau$. Then, we obtain 
\begin{align*}
n/4\leq |V_{1*}'|=|\mathcal{J}_1|\leq (1/2+\tau)n.
\end{align*}
Similarly as above, by Proposition~\ref{prop:degree-property-extremal},
we have $\delta^{0}(\mathcal{J}_1)\geq d/2-2r\geq |\mathcal{J}_1|/10$.
By Proposition~\ref{prop:after-contraction}, $\mathcal{P}'$ is a $(4,\tau/2,8\nu)$-partition of~$G'$. 
Also
\begin{align*}
    e(\mathcal{J}_1)&= e(V_{1*}',V_{*1}')
    \geq \delta^{0}(G')|V_{1*}'|- |\mathcal{B}_2(\mathcal{P}',G')| 
    \\
    &\geq (d-2r)|V_{1*}'|-  8\nu n^2 \ge (d -64\nu n)|\mathcal{J}_1|
\end{align*}
as $|V_{1*}'|=|\mathcal{J}_1|\geq n/4$.
Let $B^{+}(\mathcal{J}_1)$ be the set of vertices in $\mathcal{J}_1$ satisfying $d_{\mathcal{J}_1}^{+}(v)<d -\eps n/4$. Similarly define $B^{-}(\mathcal{J}_1)$. Since $\Delta^0(\mathcal{J}_1) \le d$, we obtain
\begin{align*}
\left(|\mathcal{J}_1|-|B^{+}(\mathcal{J}_1)|\right)d+|B^{+}(\mathcal{J}_1)|\left(d-\eps n/4\right)\geq e(\mathcal{J}_1)\geq  (d -64\nu n)|\mathcal{J}_1|,
\end{align*}
which implies $64\nu n|\mathcal{J}_1|\geq |B^{+}(\mathcal{J}_1)|\eps n/4$. Hence, we have $|B^{+}(\mathcal{J}_1)|\leq 256\nu |\mathcal{J}_1|/\eps$, and similarly, $|B^{-}(\mathcal{J}_1)|\leq 256\nu |\mathcal{J}_1|/\eps$. As a result, we obtain
\begin{align*}
|B^{+}(\mathcal{J}_1)|+ |B^{-}(\mathcal{J}_1)| \leq 512\nu |\mathcal{J}_1|/\eps \leq \sqrt{\nu}|\mathcal{J}_1|  
\end{align*}
as $\nu \ll \eps$. Hence, for all but at most $\sqrt{\nu} |\mathcal{J}_1|$ vertices $v \in V(\mathcal{J}_1)$, we have 
\begin{align*}
    d_{\mathcal{J}_1}^{+}(v),d_{\mathcal{J}_1}^{-}(v)\geq (d -\eps n/4) \ge (1/2+\eps/3)|\mathcal{J}_1|.
\end{align*}

\noindent Therefore, $\mathcal{J}_1$ satisfies the conditions of Corollary~\ref{cor:Higher-Than-Half-For-All-But-Few}, so it is Hamiltonian.\\

\noindent\underline{Case 2:} 
Suppose that there exists a $2$-pair~$\phi^2 = (\phi_{2*},\phi_{*2})$ with respect to~$\mathcal{W}$ such that  $\mathcal{J}^{2}(\mathcal{W},G-R,\phi^2)$ is not a robust $(\nu^{1/2}, \tau)$-outexpander. 
Let $\mathcal{J}^2 = \mathcal{J}^{2}(\mathcal{W},G-R,\phi^2)$.
We now show that there is a $(9, \tau/6, 20 \nu^{1/2})$-partition for~$G$ (so that we can apply Lemma~\ref{lem:9partitionHamiltonian}).\\

\no 
Note that it suffices to show that $G-R$ admits a $(9,\tau/3,10\nu^{1/2})$-partition since $|R|=r\leq 16\nu n$ (so we can arbitrarily add the vertices of $R$ into those 9 parts, which would cause a small amount of increase in the number of bad edges). Recall that $\mathcal{J}^2$ is a digraph on $[t]\cup W_{22}$ where $t=|W_{12}|=|W_{21}|$, and $\phi_{2*}:[t]\cup W_{22}\to W_{2*}$, $\phi_{*2}:[t]\cup W_{22}\to W_{*2}$ are bijections satisfying $\phi_{2*}(x)=\phi_{*2}(x)=x$ for all $x\in W_{22}$. First we show $\mathcal{J}^2$ is almost regular, so it admits a $(4,\tau,4\gamma^{1/2})$-partition by using Lemma~\ref{lem:structure_not_expander} since we assumed it is not a robust $(\nu^{1/2},\tau)$-outexpander. Note that any partition $\{U_{ij}:i,j\in[2]\}$ of $V(\mathcal{J}^2)$ also gives a $4$-partition for $W_{22}$. Similarly, $\{U_{ij}:i,j\in[2]\}$ partitions $W_{12}$ (resp. $W_{21}$) into 2 parts depending on $y\in U_{1*}$ or $y\in U_{2*}$ (resp. $y\in U_{*1}$ or $y\in U_{*2}$) for each $y\in[t]$, so we obtain a $9$-partition of $G-R$. Then we show bad edges in this $9$-partition (almost) correspond to $\mathcal{B}_2(\mathcal{W},G-R)$, so we can find an upper bound for the number of them. \\

\no
Let $\theta=d/|W_{2*}|$. 
Remove any loops in $\mathcal{J}^2$.
Notice that we have $|\mathcal{J}^2| = |W_{2*}|$ and $\Delta^{0}(\mathcal{J}^2)\leq d=\theta |W_{2*}|$. 
Since $W_{2*} = V_{2*}$ and $W_{*2} = V_{*2}-R$,  we have by~\eqref{eqn:|W_2*|}
\begin{align*}
    e(\mathcal{J}^2) & \ge e(W_{2*}, W_{*2}) - n = e(V_{2*}, W_{*2}) -n \geq d|W_{*2}| - e(\mathcal{B}_2(\mathcal{P}, G)) -n
    \\
    &\ge  d|W_{2*}|-\nu n^2 -n
    \geq (\theta-\nu^{1/2})|W_{2*}|^2.
\end{align*}
By Lemma~\ref{lem:structure_not_expander}, $\mathcal{J}^2$ admits a $(4, \tau, 4 \nu^{1/2})$-partition $\mathcal{P}^*_2 = \{U_{ij}:i,j \in [2]\}$.
Let $X_i = \phi_{2*}(U_{i*})$ and $Y_i = \phi_{*2}(U_{*i})$ for $i \in [2]$.
Hence we have 
\begin{align}
\label{eqn:X_i,Y_i}
|X_1|,|X_2|, |Y_1|, |Y_2| & \geq \tau |W_{2*}| \ge \tau(n-r)/2\geq
\tau n/3, \\
\label{eqn:e(XY)}
e_{G}(X_1,Y_2)+e_{G}(X_2,Y_1) & \leq \mathcal{B}_2(\mathcal{P}^*_2,\mathcal{J}^2)+|W_{2*}| \le 5\nu^{1/2} |W_{2*}|^2,
\end{align}
where we have used~\eqref{eqn:|W_2*|} and \eqref{eqn:r} for the first line.
Then, let us define the partition $\mathcal{Z}=\{Z_{ij}:i,j \in [3]\}$ for $G-R$ as follows:
\[
\begin{array}{lll}
 Z_{11}=W_{22}\cap X_1\cap Y_1, &Z_{12}=W_{22}\cap X_1\cap Y_2, &Z_{13}=W_{21}\cap X_1, \\
 Z_{21}=W_{22}\cap X_2\cap Y_1, &Z_{22}=W_{22}\cap X_2\cap Y_2, &Z_{23}=W_{21}\cap X_2, \\
  Z_{31}=W_{12}\cap Y_1, &Z_{32}=W_{12}\cap Y_2, &Z_{33}=W_{11}.
\end{array} 
\]
Notice that, for $i \in [2]$
\begin{align*}
    |Z_{i*}|=|X_i| \geq \tau n/3 \text{ and } |Z_{*i}|=|Y_i| \ge \tau n/3
\end{align*}
by~\eqref{eqn:X_i,Y_i}.
Also, by \eqref{eqn:|W_1*|}, we have $|Z_{3*}|=|W_{1*}|\geq  \tau n/3$ and $|Z_{*3}|=|W_{*1}|\geq \tau n/3$.
Note that
\begin{align*}
    \mathcal{B}_3(\mathcal{Z},G-R) & \subseteq E_{G}(X_1,Y_2)  \cup E_{G}(X_2,Y_1) \cup \bigcup_{i,j\neq 3} \left( E_G(Z_{i*},Z_{*3}) \cup E_G(Z_{3*},Z_{*j})\right) \\
    & = E_{G}(X_1,Y_2)  \cup E_{G}(X_2,Y_1) \cup \mathcal{B}_2(\mathcal{W},G-R) \\
    &\subseteq E_{G}(X_1,Y_2)  \cup E_{G}(X_2,Y_1) \cup \mathcal{B}_2(\mathcal{P},G),
\end{align*}
so \eqref{eqn:e(XY)} implies that $|\mathcal{B}_3(\mathcal{Z},G-R)|  \le 5 \nu^{1/2} |W_{2*}|^2 + 4 \nu n^2 \le 10 \nu^{1/2} |G-R|^2$.
Therefore, $\mathcal{Z}$ is a $(9,\tau/3,10\nu^{1/2})$-partition for~$G-R$. 
Let us distribute the vertices of~$R$ into elements of~$\mathcal{Z}$ arbitrarily.
Since $r\leq 16\nu n\ll \tau n$, the modified version of $\mathcal{Z}$ becomes a $(9,\tau/6, 20\nu^{1/2})$-partition for~$G$. Hence, by Lemma~\ref{lem:9partitionHamiltonian}, $G$ is Hamiltonian, as required.\end{proof}

\section{Conclusion}
\label{ch:conclusion}

\no The main result of this paper is a proof of the approximate version of Jackson's conjecture, namely Conjecture~\ref{conj:JacksonMain}. It remains an open problem to prove this conjecture exactly. Similarly, it would be interesting (and probably easier) to obtain an exact version of Theorem~\ref{thm:MAIN-1-3-CASE}, namely to show that every strongly well-connected $n$-vertex $d$-regular digraph with $d \geq n/3$ is Hamiltonian. \\

\noindent
Another natural question is to ask for the analogue of Theorem~\ref{thm:MAIN-1-3-CASE} for oriented graphs.
By suitably orienting the edges in a  non-Hamiltonian $2$-connected regular graph on $n$ vertices with degree
close to~$n/3$ (see e.g. \cite{kuhn2016solution}), there exist non-Hamiltonian strongly well-connected regular oriented graphs on~$n$ vertices with $d$ close to~$n/6$. 

\begin{proposition}  \label{prop:wellconncetedexample}
	For $n \in \mathbb{N}$, there exists a strongly well-connected $3n$-regular oriented graph on $18n+5$ vertices with no Hamilton cycle.
\end{proposition}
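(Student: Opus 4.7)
The plan is to construct a $6n$-regular $2$-connected non-Hamiltonian graph $H$ on $18n+5$ vertices and then orient it carefully to obtain the required $3n$-regular strongly well-connected oriented graph. For the construction, I would let $G_1, G_2, G_3$ be pairwise vertex-disjoint copies of $K_{6n+1}$ and introduce two additional vertices $u, v$. Inside each $G_i$ I fix disjoint subsets $U_i, V_i \subseteq V(G_i)$ with $|U_i| = |V_i| = 2n$ (possible since $|G_i| = 6n+1 \ge 4n$), add all edges between $u$ and $U_1 \cup U_2 \cup U_3$ and between $v$ and $V_1 \cup V_2 \cup V_3$, and then, to restore regularity, remove a perfect matching $M_i$ inside the complete graph $G_i[U_i \cup V_i]$ for each $i \in [3]$. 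The resulting graph $H$ has $18n+5$ vertices, is $6n$-regular, and is $2$-connected (each clique minus one vertex remains connected and is joined to both $u$ and $v$, and removing $u$ or $v$ leaves the three cliques joined through the other).

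To see $H$ is non-Hamiltonian, any Hamilton cycle visits $u$ and $v$ exactly once, so removing them from the cycle yields exactly two arcs. Since every edge between distinct cliques is incident to $u$ or $v$, each of these arcs lies within a single $G_i$, so the cycle reaches at most two cliques, a contradiction.

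Since $H$ is $6n$-regular, by Petersen's $2$-factor theorem it decomposes into $3n$ edge-disjoint $2$-factors; orienting each $2$-factor cyclically produces an oriented graph $G$ with $d^+(x)=d^-(x)=3n$ at every vertex. Because any directed Hamilton cycle in $G$ is an undirected Hamilton cycle in $H$, the resulting $G$ has no directed Hamilton cycle.

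The hard part will be verifying that $G$ is strongly well-connected. I would choose the orientation so that for every $i \in [3]$ the $2n$ edges between $u$ and $U_i$ split into exactly $n$ out-edges and $n$ in-edges at $u$, and similarly for $v$ and each $V_i$; such a balanced orientation can be arranged by a standard flow/Eulerian-orientation argument on $H$ (choose each clique orientation and each bridge orientation so that all local degree-balance constraints are satisfied, which is possible since all relevant degrees are even). Then, given any partition $(A,B)$ of $V(G)$ with $|A|, |B| \ge 2$, I would proceed by case analysis on how $\{u,v\}$ splits between $A$ and $B$ and on the sizes $|A \cap V(G_i)|$. The abundance of oriented edges inside each clique (every vertex has at least $3n-1$ in- and out-neighbours in its clique) and the balanced orientation of the bridges (which yields, for each $i$, both $A\to B$ and $B\to A$ bridge edges whenever $V(G_i)$ is split nontrivially, or whenever $u$ and $v$ lie on opposite sides) together supply non-incident edges $ab \in E_G(A,B)$ and $cd \in E_G(B,A)$. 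The most delicate subcases are those with $|A|=2$ or with $A$ very close to a single $V(G_i)$, but in each of these the presence of two independent bridge vertices $u, v$, each balanced across all three cliques, guarantees enough crossing edges in both directions with distinct endpoints to extract the required non-incident pair.
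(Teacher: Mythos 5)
Your construction is essentially the same one the paper uses: three large cliques that are linked to the rest of the graph only through two "hub'' vertices, with matchings removed inside the cliques to restore regularity, and the non-Hamiltonicity coming from the fact that deleting the two hubs splits the graph into three components while deleting two vertices from a Hamilton cycle leaves at most two. Where you genuinely diverge is in how the orientation is produced. You first build an undirected $6n$-regular graph $H$ and then argue for the existence of a suitable orientation (via Petersen's $2$-factor theorem, and then separately via a balanced Eulerian/flow argument to control the orientations of the hub edges). The paper instead starts from three \emph{regular tournaments} $G_1,G_2,G_3$, removes a directed matching $M_i=\{x^i_j y^i_j\}$ from each, and adds the directed edges $x^i_j\to z\to y^i_j$ (for the first $n$ pairs) and $x^i_{j+n}\to z'\to y^i_{j+n}$ (for the remaining $n$); this gives the $3n$-regular orientation immediately, with no separate orientation lemma needed, and places both endpoints of each removed matching edge at the \emph{same} hub rather than splitting them between $u$ and $v$ as you do. Your route works, but it carries extra technical debt: you must actually prove that a balanced orientation with $n$ in- and $n$ out-edges from $u$ to each $U_i$ exists (this is fine by Hakimi-type orientability conditions but is not a one-liner), and your verification of strong well-connectedness is deferred to a promised case analysis. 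The paper also leaves strong well-connectedness as a "Note that'' assertion, so you are not worse off there, but the tournament-based construction makes the whole thing tighter and avoids invoking two separate auxiliary results.
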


\begin{proof}
    Let $G_1$, $G_2$ and $G_3$ be vertex-disjoint regular tournaments each on $(6n+1)$ vertices. 
    For $i \in [3]$, let $M_i = \{ x^{i}_j y^{i}_j : j \in [2n] \}$ be a matching of size~$2n$ in~$G_i$. 
    Define $G$ to be the oriented graph obtained from $\bigcup_{i \in [3]} (G_i - M_i)$ by adding two new vertices $z$ and $z'$ and edge set $\{ x^{i}_j z, z y^{i}_j, x^{i}_{j+n} z', z' y^{i}_{j+n} : i \in [3], j \in [n] \}$.
    Note that $G$ is a $3n$-regular oriented graph on $18 n +5$ vertices. 
    We claim that $G$ is strongly well-connected. Indeed, $G$ has a cycle with vertex set $V(G_1) \cup V(G_2) \cup \{z, z' \}$ and another cycle with vertex set $V(G_3) \cup \{z, z'\}$. The union of these two cycles (which is a subdigraph of $G$) is already strongly well-connected; hence $G$ is strongly well-connected.
    However $G$ is not Hamiltonian because deleting the two vertices $z$ and $z'$ from $G$ disconnects it into $3$ components (whereas deleting any $2$ vertices from a Hamilton cycle disconnects it into at most $2$ components). 
\end{proof}

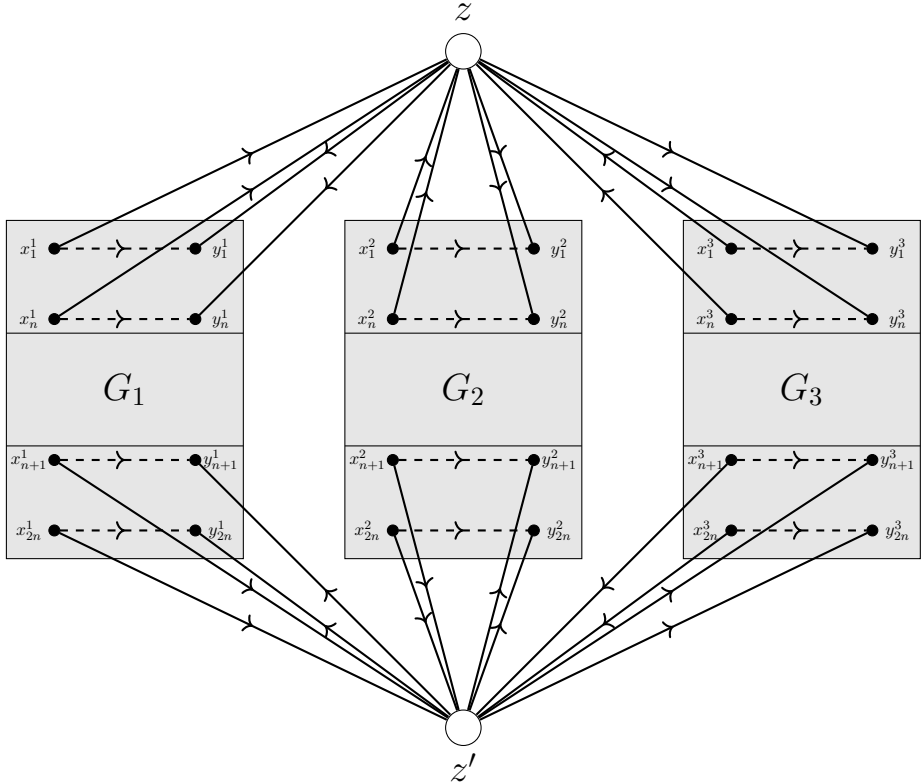
\begin{figure}[h]
    \centering
    \begin{tikzpicture}[scale=0.75]

\draw[fill=SHADEDGRAY]   (-2.1,11) -- (-2.1,5) -- (2.1,5) -- (2.1,11) -- cycle ;
\draw (-2.1,9) -- (2.1,9);
\draw (-2.1,7) -- (2.1,7);
\draw[fill=SHADEDGRAY]   (-8.1,11) -- (-8.1,5) -- (-3.9,5) -- (-3.9,11) -- cycle ;
\draw (-8.1,9) -- (-3.9,9);
\draw (-8.1,7) -- (-3.9,7);
\draw[fill=SHADEDGRAY]   (3.9,11) -- (3.9,5) -- (8.1,5) -- (8.1,11) -- cycle ;
\draw (3.9,9) -- (8.1,9);
\draw (3.9,7) -- (8.1,7);

\node [style=whitecircle,scale=1.3] (1) at (0, 14) {};
\node [style=whitecircle,scale=1.3] (2) at (0, 2) {};

\node at (0,14.7) {\Large$z$};
\node at (0,1.3) {\Large$z'$};

\node at (-6,8) {\Large$G_1$};
\node at (0,8) {\Large$G_2$};
\node at (6,8) {\Large$G_3$};

\node [style=blackcircle,scale=0.4] (3) at (-1.25, 10.5) {};
\node [style=blackcircle,scale=0.4] (4) at (1.25, 10.5) {};
\node [style=blackcircle,scale=0.4] (7) at (-1.25, 9.25) {};
\node [style=blackcircle,scale=0.4] (8) at (1.25, 9.25) {};
\node [scale=0.6] at (-1.7,10.5) {$x_1^2$};
\node [scale=0.6] at (1.7,10.5) {$y_1^2$};
\node [scale=0.6] at (-1.7,9.25) {$x_n^2$};
\node [scale=0.6] at (1.7,9.25) {$y_n^2$};
\node [style=blackcircle,scale=0.4] (9) at (-1.25, 6.75) {};
\node [style=blackcircle,scale=0.4] (10) at (1.25,6.75) {};
\node [style=blackcircle,scale=0.4] (13) at (-1.25, 5.5) {};
\node [style=blackcircle,scale=0.4] (14) at (1.25,5.5) {};
\node [scale=0.6] at (-1.7,6.75) {$x_{n+1}^2$};
\node [scale=0.6] at (1.7,6.75) {$y_{n+1}^2$};
\node [scale=0.6] at (-1.7,5.5) {$x_{2n}^2$};
\node [scale=0.6] at (1.7,5.5) {$y_{2n}^2$};

\node [style=blackcircle,scale=0.4] (333) at (-7.25, 10.5) {};
\node [style=blackcircle,scale=0.4] (334) at (-4.75, 10.5) {};
\node [style=blackcircle,scale=0.4] (337) at (-7.25, 9.25) {};
\node [style=blackcircle,scale=0.4] (338) at (-4.75, 9.25) {};
\node [scale=0.6] at (-7.7,10.5) {$x_1^1$};
\node [scale=0.6] at (-4.3,10.5) {$y_1^1$};
\node [scale=0.6] at (-7.7,9.25) {$x_n^1$};
\node [scale=0.6] at (-4.3,9.25) {$y_n^1$};
\node [style=blackcircle,scale=0.4] (339) at (-7.25, 6.75) {};
\node [style=blackcircle,scale=0.4] (3310) at (-4.75,6.75) {};
\node [style=blackcircle,scale=0.4] (3313) at (-7.25, 5.5) {};
\node [style=blackcircle,scale=0.4] (3314) at (-4.75,5.5) {};
\node [scale=0.6] at (-7.7,6.75) {$x_{n+1}^1$};
\node [scale=0.6] at (-4.3,6.75) {$y_{n+1}^1$};
\node [scale=0.6] at (-7.7,5.5) {$x_{2n}^1$};
\node [scale=0.6] at (-4.3,5.5) {$y_{2n}^1$};

\node [style=blackcircle,scale=0.4] (1333) at (4.75, 10.5) {};
\node [style=blackcircle,scale=0.4] (1334) at (7.25, 10.5) {};
\node [style=blackcircle,scale=0.4] (1337) at (4.75, 9.25) {};
\node [style=blackcircle,scale=0.4] (1338) at (7.25, 9.25) {};
\node [scale=0.6] at (7.7,10.5) {$y_1^3$};
\node [scale=0.6] at (4.3,10.5) {$x_1^3$};
\node [scale=0.6] at (7.7,9.25) {$y_n^3$};
\node [scale=0.6] at (4.3,9.25) {$x_n^3$};
\node [style=blackcircle,scale=0.4] (1339) at (4.75, 6.75) {};
\node [style=blackcircle,scale=0.4] (13310) at (7.25,6.75) {};
\node [style=blackcircle,scale=0.4] (13313) at (4.75, 5.5) {};
\node [style=blackcircle,scale=0.4] (13314) at (7.25,5.5) {};
\node [scale=0.6] at (7.7,6.75) {$y_{n+1}^3$};
\node [scale=0.6] at (4.3,6.75) {$x_{n+1}^3$};
\node [scale=0.6] at (7.7,5.5) {$y_{2n}^3$};
\node [scale=0.6] at (4.3,5.5) {$x_{2n}^3$};


\begin{scope}[thick,decoration={
    markings,
    mark=at position 0.5 with {\arrow{>}}}
    ] 

\draw[postaction={decorate},dashed] (3)--(4);
\draw[postaction={decorate},dashed] (7)--(8);
\draw[postaction={decorate},dashed] (9)--(10);
\draw[postaction={decorate},dashed] (13)--(14);

\draw[postaction={decorate},dashed] (333)--(334);
\draw[postaction={decorate},dashed] (337)--(338);
\draw[postaction={decorate},dashed] (339)--(3310);
\draw[postaction={decorate},dashed] (3313)--(3314);

\draw[postaction={decorate},dashed] (1333)--(1334);
\draw[postaction={decorate},dashed] (1337)--(1338);
\draw[postaction={decorate},dashed] (1339)--(13310);
(13312);
\draw[postaction={decorate},dashed] (13313)--(13314);
\end{scope}

\begin{scope}[thick,decoration={
    markings,
    mark=at position 0.5 with {\arrow{>}}}
    ] 

\draw[postaction={decorate}] (3)--(1);
\draw[postaction={decorate}] (7)--(1);
\draw[postaction={decorate}] (1)--(4);
\draw[postaction={decorate}] (1)--(8);

\draw[postaction={decorate}] (9)--(2);
\draw[postaction={decorate}] (13)--(2);
\draw[postaction={decorate}] (2)--(10);
\draw[postaction={decorate}] (2)--(14);

\draw[postaction={decorate}] (333)--(1);
\draw[postaction={decorate}] (337)--(1);
\draw[postaction={decorate}] (1)--(334);
\draw[postaction={decorate}] (1)--(338);

\draw[postaction={decorate}] (339)--(2);
\draw[postaction={decorate}] (3313)--(2);
\draw[postaction={decorate}] (2)--(3310);
\draw[postaction={decorate}] (2)--(3314);

\draw[postaction={decorate}] (1333)--(1);
\draw[postaction={decorate}] (1337)--(1);
\draw[postaction={decorate}] (1)--(1334);
\draw[postaction={decorate}] (1)--(1338);

\draw[postaction={decorate}] (1339)--(2);
\draw[postaction={decorate}] (13313)--(2);
\draw[postaction={decorate}] (2)--(13310);
\draw[postaction={decorate}] (2)--(13314);

\end{scope}

    \end{tikzpicture}
    \caption{A strongly well-connected $3n$-regular oriented graph $G$ on $18n+5$ vertices}
    \label{fig:EXTREMAL-3n-regular}
\end{figure}

\noindent 
Are all strongly well-connected $d$-regular oriented graphs on $n$ vertices with $d \ge n/6$ Hamiltonian? We note that a version of this question with ``strongly $2$-connected'' in place of ``strongly well-connected'' was asked in~\cite{SurveyDirected}, but Proposition~\ref{thm:counterexample2connected} provides a counterexample for that. \\

\noindent
Another interesting direction is to obtain an analogue of the Bollob{\'a}s--H{\"a}ggkvist Conjecture (which is discussed in the introduction) for oriented graphs. 
That is, given $t \ge 3$, determine the minimum value for $d$ such that any strongly $t$-connected $d$-regular $n$-vertex oriented graph is Hamiltonian.
For any choice of $t$, we must have $d \ge n/8$ by considering a suitable orientation of the example of Jung and of Jackson, Li, and Zhu (mentioned in the Section~\ref{sec:INTRO}), as shown below.

\begin{proposition} \label{prop:3-conncetedexample}
	For $n \in \mathbb{N}$, there exists a strongly $n$-connected $2n$-regular oriented graph on $16n+1$ vertices with no Hamilton cycle.
\end{proposition}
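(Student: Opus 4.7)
The plan is to give an explicit construction, generalizing the recipe used in Proposition~\ref{thm:counterexample2connected} and Proposition~\ref{prop:wellconncetedexample}. The graph $G$ will consist of a ``hub'' set $Z$ of cut vertices together with a number of pairwise vertex-disjoint ``leaves'' $L_1, \dots, L_k$, glued together only via edges between $Z$ and the $L_i$'s. To make $G$ non-Hamiltonian via the standard toughness argument, we choose $k > |Z|$: removing $Z$ leaves $k$ weakly connected components, whereas any Hamilton cycle cut by $|Z|$ vertices can produce at most $|Z|$ pieces. To make $G$ strongly $n$-connected, it suffices to have $|Z| \ge n$ and to arrange the $Z$-to-$L_i$ edges so that any hub vertex together with every leaf remains strongly connected after removing any $n-1$ further vertices.

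The key step is to choose the sizes and internal structures of $Z$ and the $L_i$, and the distribution of the $Z$-to-$L_i$ edges, so that the resulting oriented graph has exactly $16n+1$ vertices and is exactly $2n$-regular. Each vertex $v$ of a leaf $L_i$ can have at most $|Z|$ edges (in plus out) to $Z$, since between $v$ and each $z \in Z$ there is at most one directed edge; consequently $v$ must have at least $4n - |Z|$ neighbours (in plus out) inside its leaf, so $|L_i| \ge 4n - |Z| + 1$. Together with $|Z| + \sum_i |L_i| = 16n+1$ and $k > |Z|$, this essentially forces $|Z|$ and each $|L_i|$ to be of order $n$. As in Proposition~\ref{prop:wellconncetedexample}, each $L_i$ will be obtained from a regular tournament of appropriate odd size by removing a union of matchings, and the ``freed'' endpoints are then joined to $Z$ in a regular pattern; the number and sizes of the leaves and of $Z$ are tuned so that every $Z$-vertex and every leaf-vertex ends up with in-degree and out-degree exactly $2n$.

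Once such a $G$ is constructed, the three required properties are straightforward to verify: (a) $|V(G)| = 16n+1$ and $G$ is $2n$-regular by construction; (b) $G$ is strongly $n$-connected because removing any at most $n-1$ vertices cannot cover all of $Z$, so at least one $z \in Z$ survives, and by construction $z$ together with each remaining (strongly connected) leaf keeps $G$ strongly connected; (c) $G$ is not Hamiltonian because $G - Z$ has $k > |Z|$ weakly connected components, contradicting the standard toughness criterion (any Hamilton cycle cut by $|Z|$ vertices decomposes into at most $|Z|$ paths). The main obstacle is purely arithmetic: choosing leaves and the hub-to-leaf edge pattern so that the vertex count is \emph{exactly} $16n+1$, the degrees are \emph{exactly} $2n$, and the resulting digraph is oriented (no double edges), for every $n \in \mathbb{N}$. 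This will likely require using leaves of two or more different sizes, tuning the matching sizes inside each tournament, and handling a few small values of $n$ separately, exactly as in the slightly more forgiving construction of Proposition~\ref{prop:wellconncetedexample}.
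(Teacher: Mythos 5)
Your sketch has the right high-level idea --- it is the same hub-and-leaves construction the paper uses --- but it is a plan rather than a proof, and the step you defer as ``purely arithmetic'' is precisely where the plan as written breaks down. You propose to build each leaf $L_i$ as a regular tournament minus a matching, but a degree count rules this out unless almost all the leaves degenerate to singletons. Indeed, a vertex in a leaf on $\ell$ vertices has internal total degree at most $\ell-1$, so it needs at least $4n-\ell+1$ edges to the hub, and a leaf therefore contributes at least $\ell(4n-\ell+1)$ hub edges; meanwhile the hub $Z$ (which must satisfy $|Z|\ge n$ for strong $n$-connectivity, and $k>|Z|$ for the non-Hamiltonicity argument) can absorb at most $4n|Z|$ such edges. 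Under the constraints $k>|Z|$ and $\sum_i\ell_i=16n+1-|Z|$, these two counts can only balance when $|Z|$ is essentially $4n$, at which point your own bound $|L_i|\ge 4n-|Z|+1$ drops to $1$: almost all leaves must be singletons, which are not ``a regular tournament with a matching removed''. Concretely, trying $|Z|=4n-1$ with about $4n$ leaves of size $3$ demands roughly $48n^2$ leaf-to-hub edges against a supply of roughly $16n^2$, so that attempt fails. The analogy you draw to Proposition~\ref{prop:wellconncetedexample} is also misleading: there the hub has constant size $2$ and only $3$ large leaves, whereas here the hub has size $\Theta(n)$, forcing $\Theta(n)$ leaves, which is the opposite regime.

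The paper closes the arithmetic cleanly and uniformly in $n$ by producing the singleton leaves from one side of a bipartite graph. Take a $2n$-regular bipartite oriented graph $H$ on classes $A,B$ with $|A|=|B|=4n$, set $Z=A$, and delete one vertex $b\in B$, so that $B\setminus\{b\}$ supplies $4n-1$ singleton leaves; then attach exactly two regular tournaments on $4n+1$ vertices each, delete a size-$n$ matching from each, and reroute the $4n$ freed matching endpoints to the $4n$ vertices of $A$ that each lost exactly one edge when $b$ was removed. This gives $4n+(4n-1)+2(4n+1)=16n+1$ vertices, exact $2n$-regularity, and $(4n-1)+2=4n+1>4n=|A|$ weak components in $G-A$, with no case analysis on small $n$. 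The ingredient your proposal is missing is that the bulk of the ``more than $|Z|$'' components should be single vertices furnished by the $B$-side of a regular bipartite oriented graph, with only two nontrivial tournament leaves needed to repair the degrees of $A$.
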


\begin{proof}
    Consider a $2n$-regular oriented bipartite graph~$H$ with vertex classes $A$ and $B$ each of size~$4n$.
    Fix $b \in B$ and let $N^+_H(b) = \{a^+_1, \dots, a^+_{2n}\}$ and $N^-_H(b) = \{a^-_1, \dots, a^-_{2n}\}$.
    Let $G_1$ and $G_2$ be regular tournaments each on $(4n+1)$ vertices.
    Suppose that $V(H)$, $V(G_1)$ and $V(G_2)$ are pairwise disjoint. 
    For $i \in [2]$, let $M_i = \{ x^{i}_j y^{i}_j : j \in [n] \}$ be a matching of size~$n$ in~$G_i$. 
    Define $G$ to be the oriented graph obtained from $(H - \{b\}) \cup G_1 \cup G_2$ by removing the edges from $M_1 \cup M_2$ and adding the edges $\{ x^1_j  a^+_j,  a^-_j y^1_j , x^2_j  a^+_{j+n},  a^-_{j+n} y^2_j : j \in [n]\}$.
    Note that $G$ is a strongly $n$-connected $2n$-regular oriented graph on $16n+1$ vertices. 
    However $G$ is not Hamiltonian as removing $A$ will create $2+(|B|-1) > |A|$ components.
\end{proof}

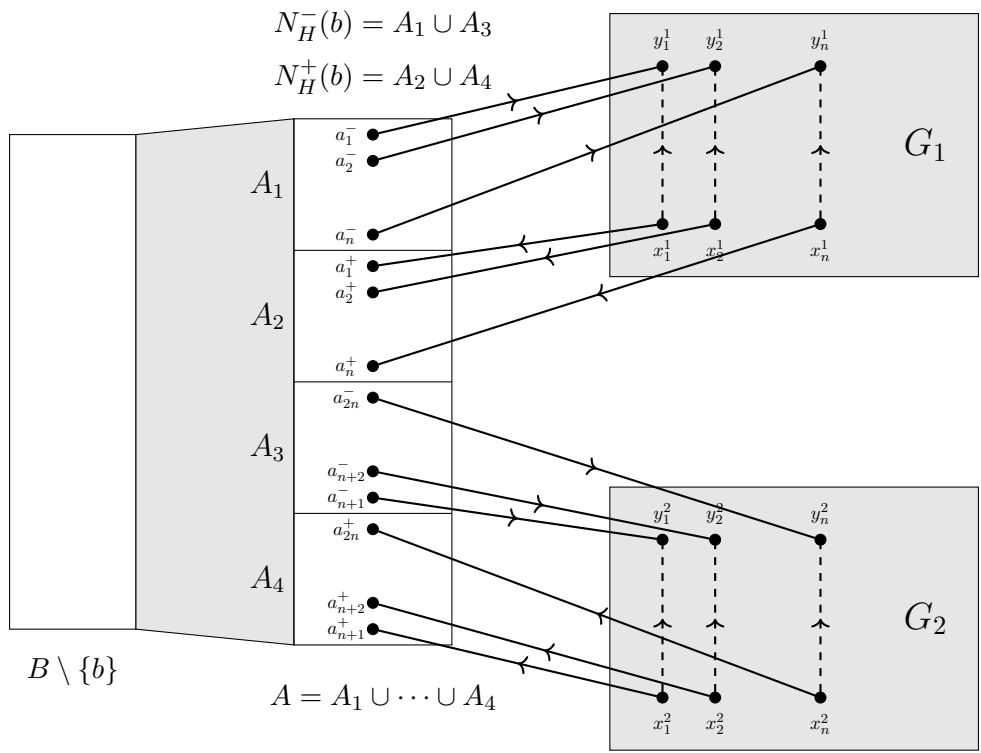
\begin{figure}[h]
    \centering

    \begin{tikzpicture}[scale=0.7]

\node at (6.7,10.8) {$N_H^+(b)=A_2\cup A_4$};
\node at (6.7,11.8) {$N_H^-(b)=A_1\cup A_3$};
\node at (6.7,-1) {$A=A_1\cup \cdots \cup A_4$};
    
\draw  (-0.4,9.7) -- (-0.4,0.3) -- (2,0.3) -- (2,9.7) -- cycle ;
\node at (0.8,-0.5) {$B\setminus\{b\}$};

\draw[fill=SHADEDGRAY] (5,10) -- (2,9.7) -- (2,0.3) -- (5,0) -- cycle ;

\draw  (5,10) -- (5,0) -- (8,0) -- (8,10) -- cycle ;
\draw (5,7.5) -- (8,7.5);
\draw (5,5) -- (8,5);
\draw (5,2.5) -- (8,2.5);
\node at (4.5,8.75) {\large$A_1$};
\node at (4.5,6.25) {\large$A_2$};
\node at (4.5,3.75) {\large$A_3$};
\node at (4.5,1.25) {\large$A_4$};


\draw[fill=SHADEDGRAY]   (11,12) -- (11,7) -- (18,7) -- (18,12) -- cycle ;
\draw[fill=SHADEDGRAY]   (11,3) -- (11,-2) -- (18,-2) -- (18,3) -- cycle ;

\node at (17,9.5) {\Large$G_1$};
\node at (17,0.5) {\Large$G_2$};

\node [style=blackcircle,scale=0.4] (a1-) at (6.5,9.7) {};
\node [scale=0.65] at (6.0,9.7) {$a_{1}^-$};
\node [style=blackcircle,scale=0.4] (a2-) at (6.5,9.2) {};
\node [scale=0.65] at (6.0,9.2) {$a_{2}^-$};
\node [style=blackcircle,scale=0.4] (an-) at (6.5,7.8) {};
\node [scale=0.65] at (6.0,7.8) {$a_{n}^-$};
\node [style=blackcircle,scale=0.4] (an1-) at (6.5,2.8) {};
\node [scale=0.65] at (6.0,2.8) {$a_{n+1}^-$};
\node [style=blackcircle,scale=0.4] (an2-) at (6.5,3.3) {};
\node [scale=0.65] at (6.0,3.3) {$a_{n+2}^-$};
\node [style=blackcircle,scale=0.4] (ann-) at (6.5,4.7) {};
\node [scale=0.65] at (6.0,4.7) {$a_{2n}^-$};
\node [style=blackcircle,scale=0.4] (a1+) at (6.5,7.2) {};
\node [scale=0.65] at (6.0,7.2) {$a_{1}^+$};
\node [style=blackcircle,scale=0.4] (a2+) at (6.5,6.7) {};
\node [scale=0.65] at (6.0,6.7) {$a_{2}^+$};
\node [style=blackcircle,scale=0.4] (an+) at (6.5,5.3) {};
\node [scale=0.65] at (6.0,5.3) {$a_{n}^+$};
\node [style=blackcircle,scale=0.4] (an1+) at (6.5,0.3) {};
\node [scale=0.65] at (6.0,0.3) {$a_{n+1}^+$};
\node [style=blackcircle,scale=0.4] (an2+) at (6.5,0.8) {};
\node [scale=0.65] at (6.0,0.8) {$a_{n+2}^+$};
\node [style=blackcircle,scale=0.4] (ann+) at (6.5,2.2) {};
\node [scale=0.65] at (6.0,2.2) {$a_{2n}^+$};

\node [style=blackcircle,scale=0.4] (x1-1) at (12,8) {};
\node [scale=0.65] at (12,7.5) {$x_{1}^1$};
\node [style=blackcircle,scale=0.4] (y1-1) at (12,11) {};
\node [scale=0.65] at (12,11.5) {$y_{1}^1$};
\node [style=blackcircle,scale=0.4] (x2-1) at (13,8) {};
\node [scale=0.65] at (13,7.5) {$x_{2}^1$};
\node [style=blackcircle,scale=0.4] (y2-1) at (13,11) {};
\node [scale=0.65] at (13,11.5) {$y_{2}^1$};
\node [style=blackcircle,scale=0.4] (xn-1) at (15,8) {};
\node [scale=0.65] at (15,7.5) {$x_{n}^1$};
\node [style=blackcircle,scale=0.4] (yn-1) at (15,11) {};
\node [scale=0.65] at (15,11.5) {$y_{n}^1$};

\node [style=blackcircle,scale=0.4] (x1-2) at (12,-1) {};
\node [scale=0.65] at (12,-1.5) {$x_{1}^2$};
\node [style=blackcircle,scale=0.4] (y1-2) at (12,2) {};
\node [scale=0.65] at (12,2.5) {$y_{1}^2$};
\node [style=blackcircle,scale=0.4] (x2-2) at (13,-1) {};
\node [scale=0.65] at (13,-1.5) {$x_{2}^2$};
\node [style=blackcircle,scale=0.4] (y2-2) at (13,2) {};
\node [scale=0.65] at (13,2.5) {$y_{2}^2$};
\node [style=blackcircle,scale=0.4] (xn-2) at (15,-1) {};
\node [scale=0.65] at (15,-1.5) {$x_{n}^2$};
\node [style=blackcircle,scale=0.4] (yn-2) at (15,2) {};
\node [scale=0.65] at (15,2.5) {$y_{n}^2$};

\begin{scope}[thick,decoration={
    markings,
    mark=at position 0.5 with {\arrow{>}}}
    ] 
    \draw[postaction={decorate},dashed] (x1-1)--(y1-1);
    \draw[postaction={decorate},dashed] (x2-1)--(y2-1);
    \draw[postaction={decorate},dashed] (xn-1)--(yn-1);
    \draw[postaction={decorate},dashed] (x1-2)--(y1-2);
    \draw[postaction={decorate},dashed] (x2-2)--(y2-2);
    \draw[postaction={decorate},dashed] (xn-2)--(yn-2);

    \draw[postaction={decorate}] (x1-1)--(a1+);
    \draw[postaction={decorate}] (x2-1)--(a2+);
    \draw[postaction={decorate}] (xn-1)--(an+);
    \draw[postaction={decorate}] (x1-2)--(an1+);
    \draw[postaction={decorate}] (x2-2)--(an2+);
    \draw[postaction={decorate}] (xn-2)--(ann+);

    \draw[postaction={decorate}] (a1-) to (y1-1);
    \draw[postaction={decorate}] (a2-) to (y2-1);
    \draw[postaction={decorate}] (an-) to (yn-1);
    \draw[postaction={decorate}] (an1-) to (y1-2);
    \draw[postaction={decorate}] (an2-) to (y2-2);
    \draw[postaction={decorate}] (ann-) to (yn-2);
\end{scope}

    \end{tikzpicture}

    \caption{A strongly $n$-connected $2n$-regular oriented graph $G$ on $16n+1$ vertices}
    \label{fig:EXTREMAL-N-CONNECTED-2N-REGULAR}
\end{figure}

\noindent
Are all strongly $3$-connected $d$-regular oriented graphs on $n$ vertices with $d \ge n/8$  Hamiltonian?\\

\noindent
For digraphs, one can similarly ask whether all strongly well connected (or $3$-connected) $d$-regular digraphs on $n$ vertices with $d \ge n/3$ (or $d \ge n/4$, respectively) Hamiltonian?
If the answer is yes, then the value of $d$ is best possible by considering the digraph analogues of the examples given by Propositions~\ref{prop:wellconncetedexample} and~\ref{prop:3-conncetedexample}.

\end{document}